\theoremstyle{plain}
\newtheorem{prop}{Proposition}[section]
\newtheorem{thm}[prop]{Theorem}
\newtheorem{cor}[prop]{Corollary}
\newtheorem{lem}[prop]{Lemma}
\theoremstyle{definition}
\newtheorem{dfn}[prop]{Definition}
\newtheorem{rem}[prop]{Remark}
\newtheorem{war}[prop]{Warning}
\newtheorem{rems}[prop]{Remarks}
\newtheorem{example}[prop]{Example}
\newtheorem{lab}[prop]{}
\newtheorem{schol}[prop]{Scholium}
\newcommand{\A}{\mathbb A}
\newcommand{\N}{\mathbb N}
\newcommand{\Q}{\mathbb Q}
\newcommand{\R}{\mathbb R}
\newcommand{\Z}{\mathbb Z}
\newcommand{\m}{\mathfrak m}
\newcommand{\x}{{\mathtt{x}}}
\DeclareMathOperator{\Spec}{Spec}
\newcommand{\Hom}{{\rm Hom}}
\newcommand{\supp}{{\rm supp}}
\newcommand{\du}{{\scriptscriptstyle\vee}}
\renewcommand{\iff}{\Leftrightarrow}
\renewcommand{\epsilon}{\varepsilon}
\newcommand{\To}{\Rightarrow}
\newcommand{\isoto}{\overset{\sim}{\to}}
\newcommand{\into}{\hookrightarrow}
\newcommand{\ol}[1]{\overline{#1}}
\renewcommand{\setminus}{\smallsetminus}
\newcommand{\plus}{{\scriptscriptstyle+}}
\newcommand{\all}{\forall\>}
\newcommand{\ex}{\exists\>}
\newcommand{\wh}[1]{\widehat{#1}}
\newcommand\PO{\text{PO}}
\newcommand\QM{\text{QM}}
\renewcommand{\choose}[2]{\genfrac(){0pt}{}{#1}{#2}}
\begin{document}

\title
[Pure states, nonnegative polynomials and sums of squares]
{Pure states, nonnegative polynomials\\and sums of squares}

\author{Sabine Burgdorf}
\address{Institut de Recherche Math\'ematique de Rennes,
         Universit\'e de Rennes I,
         Campus de Beaulieu,
         35042 Rennes cedex,
         France}
\email{sabine.burgdorf@univ-rennes1.fr}
\author{Claus Scheiderer}
\address{Universit\"at Konstanz,
         Fachbereich Mathematik und Statistik,
         78457 Konstanz,
         Allemagne}
\email{claus.scheiderer@uni-konstanz.de}
\author{Markus Schweighofer}
\address{Institut de Recherche Math\'ematique de Rennes,
         Universit\'e de Rennes I,
         Campus de Beaulieu,
         35042 Rennes cedex,
         France}
\email{markus.schweighofer@univ-rennes1.fr}
\keywords
  {pure states, extremal homomorphisms, order units,
  nonnegative polynomials, sums of squares,
  convex cones, quadratic modules, preorderings, semirings}
\subjclass[2000]
  {Primary 06F20, 11E25, 13J30;
  Secondary 06F25, 13A15, 14P10, 26C99, 46L30, 52A99}
\dedicatory{Professor Alexander Prestel gewidmet aus Anlass seiner
  Emeritierung}
\date{May 26, 2009}

\begin{abstract}
In recent years, much work has been devoted to a systematic study of
polynomial identities certifying strict or non-strict positivity of
a polynomial $f$ on a basic closed set $K\subset\R^n$. The interest
in such identities originates not least from their importance in
polynomial optimization. The majority of the important results
requires the archimedean condition, which implies that $K$ has to be
compact. This paper introduces the technique of pure
states into commutative algebra. We show that this technique allows
an approach to most of the recent archimedean Stellens\"atze that is
considerably easier and more conceptual than the previous proofs. In
particular, we reprove and strengthen some of the most important
results from the last years. In addition, we establish several such
results which are entirely new. They are the first that allow $f$ to
have arbitrary, not necessarily discrete, zeros in $K$.
\end{abstract}

\maketitle


\section*{Introduction}

Consider a sequence $g_1,\dots,g_r\in\R[\x]=\R[x_1,\dots,x_n]$ of
real polynomials together with the basic closed semi-algebraic set
$K=\{x\colon g_1(x)\ge0$, $\dots,g_r(x)\ge0\}$ in $\R^n$. Given a
polynomial $f\in\R[\x]$ which is nonnegative on $K$, it is an
important problem, both from a theoretical and from a practical point
of view, to understand whether there exist simple algebraic
certificates that make the nonnegative character of $f$ evident.
Traditionally, a result stating the existence of a particular type of
such certificates is called a \emph{Positivstellensatz}, or a
\emph{Nichtnegativstellensatz}, depending on whether $f$ is supposed
to be strictly or only non-strictly positive.

Krivine \cite{Kr} and Stengle \cite{St} proved that such certificates
always exist. However, their results amount to rational
representations of $f$, that is, representations with denominators.
Much harder to establish, but also much more powerful when they
exist, are denominator-free representations of $f$, such as
$$f=s_0+\sum_{i=1}^rs_ig_i,\quad f=\sum_{i_1=0}^1\cdots\sum_{i_r=0}^1
s_{i_1,\dots,i_r}\cdot g_1^{i_1}\cdots g_r^{i_r}$$
or
$$f=\sum_{i_1,\dots,i_r\ge0}a_{i_1,\dots,i_r}\cdot g_1^{i_1}\cdots
g_r^{i_r},$$
in which the $s_i$ or $s_{i_1,\dots,i_r}$ are sums of squares of
polynomials and the $a_{i_1,\dots,i_r}$ are nonnegative real numbers.
The study of such identities comprises questions of existence and
complexity as well as algorithmic aspects. Considerable research
efforts have been spent in recent years on these questions (see
\cite{PD}, \cite{Ma:book}, \cite{Sch:guide}), not least because of their
central importance in polynomial optimization (see \cite{La} for an
excellent survey).

An \emph{urversion} of a denominator-free representation result is
the so-called archimedean representation theorem, due to Stone,
Krivine, Kadison, Dubois and others. See \cite{PD} Sect.~5.6, and
also Thm.\ \ref{repth} below. It asserts that $f$ has a
representation as desired, provided that $f>0$ on $K$ and the
archimedean condition holds. Many refinements of this result have
been proved in the last decade, notably extensions to cases where
$f$ is allowed to have zeros in $K$. Some of them are recalled in
Sect.~6 below. A common feature of all these results is the
archimedean hypothesis. See \ref{archrappl} for its technical
definition. Note that in any case, this condition implies that $K$ is
bounded, hence compact.

The purpose of this paper is to lay out a new approach to these
results and to new archimedean \emph{Stellens\"atze}, which is based
on pure states of the associated cones in $\R[\x]$. This new approach
permits proofs which are considerably more transparent, easy and
uniform than the existing ones. In a number of cases, we arrive at
substantially stronger results than known so far. In addition, using
the new technique, we prove several archimedean
Nichtnegativstellens\"atze which are completely new. Altogether, we
believe that this paper gives ample support to our claim that the
consequent use of pure states is a powerful tool in the study of
archimedean Stellens\"atze. We remark that the results presented here
do by far not exhaust the applications of this technique. We plan to
give further applications elsewhere.

The technique of pure states relies on an old separation theorem for
convex sets in a real vector space $V$, due to Eidelheit and Kakutani
(\cite{Ei}, \cite{Kk}). Combined with the Krein-Milman theorem, it
yields a sufficient condition for membership in a convex cone
$C\subset V$, provided that $C$ has an order unit (also known as
algebraic interior point): If $x\in V$ and all nonzero states of $C$
have strictly positive value in $x$, then $x\in C$. The first
systematic use of this criterion was probably made by Goodearl and
Handelman \cite{GH}.

The starting point for this work was a remark of Handelman made to
the third author in 2004. Handelman pointed out that a slightly weaker
version of Theorem 2 in \cite{Sw:cert} (corresponding to the special case
$M=S$ in Theorem \ref{sumbiti} below) can be proved easily by using pure states.

We now give a brief overview of the contents of this paper. Among its
seven sections, the first five are preparatory in character, while
the last two contain the main applications. After a few notational
preliminaries in Sect.~1, we recall the general Goodearl-Handelman
criterion in Sect.~2. From Sect.~3 on we work in a commutative ring
$A$ and consider (pseudo-) modules $M$ over subsemirings $S$ of $A$.
After studying order units in such $M$ in general (Sect.~3), we prove
an important fact in Sect.~4, which applies in the situations which
are most common ($S$ archimedean or $S$ containing all squares): If
$M$ contains an order unit with respect to the ideal it generates,
then the associated pure states satisfy a multiplicative law of a
very peculiar form. See Cor.\ \ref{dicho} for a summarizing
statement. This fact lies at the basis of all later applications.
Sect.~5 discusses the question whether intersecting $M$ with an ideal
of $A$ preserves the existence of an order unit. This is an important
technical point, as explained in \ref{remmoral}.

In Sect.~6 we review some of the most important Positiv- and
Nichtne\-ga\-tiv\-stellens\"atze in real algebra. Using pure states,
we reprove them in an elegant and uniform way. For some of them we
arrive at statements that are considerably stronger than previously
known (Theorems \ref{sumbiti}, \ref{lgcrit}). Finally, in Sect.~7 we
use pure states to arrive at Nichtnegativstellens\"atze which are
entirely new. The so far known results of this type apply only
(essentially) in the case where the zeros of the polynomial $f$ in
$K$ are discrete. The two main results presented here are Theorems
\ref{glatterandpkte} and \ref{innpkte}. In both, the zero set of $f$
in $K$ can have any dimension. While in Thm.\
\ref{glatterandpkte}, this zero set necessarily lies in the boundary
of $K$ (relative to its Zariski closure), Thm.\ \ref{innpkte} applies
typically when the zeros lie in the (relative) interior of $K$. A
particularly concrete case of Thm.\ \ref{glatterandpkte} is Thm.\
\ref{polytop}, dealing with polynomials nonnegative on a polytope and
vanishing on a face. It becomes visible in Theorems
\ref{glatterandpkte} and \ref{innpkte} how pure states on suitable
ideals of the polynomial ring are closely related to directional
derivatives (of order one in \ref{glatterandpkte}, of order two in
\ref{innpkte}).

In most parts of this paper, our setup is more general than real
polynomial rings and semi-algebraic sets in $\R^n$. We explain in
\ref{xplgen} why we think such a greater generality is necessary.


\section{Notations and conventions}

\begin{lab}
We start by recalling some terminology (mostly standard) from real
algebra. General references are \cite{PD}, \cite{Ma:book},
\cite{Sch:guide}.

Let $A$ be a commutative ring (always with unit), and let
$S\subset A$ be a semiring, i.\,e., a subset containing $\{0,1\}$ and
closed under addition and multiplication. A subset $M\subset A$ is
called an \emph{$S$-pseudomodule} if $0\in M$, $M+M\subset M$ and $SM
\subset M$. If in addition $1\in M$ then $M$ is said to be an
\emph{$S$-module}. The \emph{support} of $M$ is the subgroup $\supp
(M)=M\cap(-M)$ of $A$; this is an ideal of $A$ if $S-S=A$. We
sometimes write $a\le_Mb$ to express that $b-a\in M$, for $a$, $b\in
A$. The relation $\le_M$ is anti-symmetric modulo $\supp(M)$,
transitive, and compatible with addition and with multiplication by
elements of $S$.

Particularly important is the case where $S=\Sigma A^2$, the semiring
of all sums of squares in $A$. The $\Sigma A^2$- (pseudo-) modules
are called \emph{quadratic (pseudo-) modules} in $A$. A semiring
$S\subset A$ is called a \emph{preordering} in $A$ if it contains
$\Sigma A^2$. When $\frac12\in A$ we have $\Sigma A^2-\Sigma A^2=A$
by the identity $4x=(x+1)^2-(x-1)^2$, and so, in this case,
$\supp(M)$ is an ideal for every quadratic pseudomodule $M$.

Given finitely many elements $a_1,\dots,a_r\in A$, we write
$$\QM(a_1,\dots,a_r)\>:=\>\Sigma A^2+\Sigma A^2\cdot a_1+\cdots+
\Sigma A^2\cdot a_r$$
resp.
$$\PO(a_1,\dots,a_r)\>:=\>\QM\bigl(a_1^{i_1}\cdots a_r^{i_r}\colon
i_1,\dots,i_r\in\{0,1\}\bigr)$$
for the quadratic module (resp.\ the preordering) generated by $a_1,
\dots,a_r$ in~$A$.
\end{lab}

\begin{lab}\label{archrappl}
Let $M\subset A$ be an additive semigroup containing~$1$. Then $M$
is said to be \emph{archimedean} if for every $a\in A$ there is $n\in
\N$ with $a\le_Mn$. In other words, $M$ is archimedean if and only if
$A=\Z+M$.

Note that when $M$ is archimedean, every semigroup containing $M$ is
archimedean as well.
See Remark \ref{remsomu} below for examples of archimedean semigroups.
\end{lab}

\begin{war}
In the functional analytic literature, $M$ like in \ref{archrappl} is
called archimedean if no $a\in A\setminus M$ has the property that
$\N a$ has a lower bound in $A$
with respect to $\le_M$ (see, e.\,g., p.~20 in \cite{Go}).
Our definition is completely different and coincides with the usual
terminology in real algebra (see, e.\,g., 1.5.1 in \cite{Sch:guide}).
\end{war}

\begin{lab}\label{dfnxm}
Given any subset $M\subset A$, we write
$$X(M):=\bigl\{\phi\in\Hom(A,\R)\colon\phi|_M\ge0\bigr\}$$
(where $\Hom(A,\R)$ denotes the set of ring homomorphisms $A\to\R$)
and
$$Z(M):=X(M\cup-M)=\bigl\{\phi\in\Hom(A,\R)\colon\phi|_M=0\}.$$
Considering $\Hom(A,\R)$ as a subset of $\R^A=\prod_A\R$, this set
has a natural topology. When $M$ is an archimedean semigroup in $A$,
the subset $X(M)$ of $\Hom(A,\R)$ is compact.

Write $X:=\Hom(A,\R)$. Every $a\in A$ induces a continuous map
$\wh a\colon X\to\R$ by evaluation. Thus we have the canonical ring
homomorphism (not necessarily injective)
$$A\to C(X,\R),\quad a\mapsto\wh a$$
(here $C(X,\R)$ is the ring of continuous real-valued functions on
$X$). Thinking in this way of the elements of $A$ as $\R$-valued
functions, it is natural to write $a(x)$ instead of $x(a)$, for
$a\in A$ and $x\in X$, an abuse of notation that we will often
commit.
\end{lab}

\begin{schol}
Let $A$ be a finitely generated $\R$-algebra. To emphasize the
geometric point of view we will frequently identify $\Hom(A,\R)$ with
$V(\R)$, the set of $\R$-points of the affine algebraic $\R$-scheme
$V=\Spec(A)$. Thus, if $M\subset A$ is any subset, we have
$$X(M)=\{x\in V(\R)\colon\all f\in M\ f(x)\ge0\}.$$
If $M$ is finite, or a finitely generated quadratic module in $A$,
$X(M)$ is a basic closed semi-algebraic set in $V(\R)$.

Any choice of finitely many $\R$-algebra generators $a_1,\dots,a_n$
of $A$ gives an identification of $\Hom(A,\R)=V(\R)$ with a real
algebraic subset of $\R^n$, via the map
$$\Hom(A,\R)\into\R^n,\quad x\mapsto\bigl(x(a_1),\dots,x(a_n)
\bigr).$$
The image set is the zero set of the ideal of relations between
$a_1,\dots,a_n$, and hence is real algebraic. Generally it is
preferable not to fix a set of generators in advance, and only to
introduce affine coordinates when it becomes necessary.
\end{schol}

\begin{lab}\label{xplgen}
A word on the generality of our setup. Preorderings, and
more generally quadratic modules, in polynomial rings over $\R$ are
the most traditional context for positivity results (see \cite{PD},
\cite{Ma:book}, \cite{Sch:guide}). But there are also prominent
examples which do not fit this context, like theorems by P\'olya
and Handelman \cite{Ha1}, \cite{Ha2}, \cite{Sw:cert}. These are cases
where the required algebraic objects are semirings, or modules over
semirings. It is often preferable, or even necessary, to work with
arbitrary finitely generated $\R$-algebras, instead of just
polynomial rings over $\R$. Finally, we feel that applications to
rings of arithmetic nature, like finitely generated algebras over
$\Z$ or $\Q$, are interesting enough as to not exclude these cases
a~priori.

Given all this, our basic general setup will consist of a ring $A$
and an additive semigroup $M\subset A$ (with $0\in M$).
We feel free to assume $\Q\subset A$ and $\Q_\plus M\subset M$ when
this helps to simplify technical details. Usually this does not mean
much loss of generality, since one can always pass from $A$ and $M$
to $A_\Q=A\otimes\Q$ and $M_\Q=\{x\otimes\frac1n\colon n\in\N\}$.
None of the methods discussed in this paper sees a difference between
$f\in M$ and $\ex n\in\N$ $nf\in M$.
\end{lab}

\begin{lab}
By $\N=\{1,2,3,\dots\}$ we denote the set of natural numbers. The set
of nonnegative rational, resp.\ nonnegative real, numbers is written
$\Q_\plus$, resp.\ $\R_\plus$.
\end{lab}


\section{Convex cones and pure states}

\begin{lab}\label{states}
Let $G$ be an abelian group, written additively, and let $M\subset G$
be a subsemigroup (always containing~$0$). The subgroup $\supp(M):=
M\cap(-M)$ of $G$ is called the \emph{support} of~$M$. We neither
assume $\supp(M)=\{0\}$ nor $M-M=G$ in general. It is often useful to
work with the relation $\le_M$ on $G$ defined by $x\le_My$ $:\iff$
$y-x\in M$.

A group homomorphism $\varphi\colon G\to\R$ into the additive group
of reals is called a \emph{state} of $(G,M)$ if $\varphi|_M\ge0$. We
sometimes denote the convex cone of all states by $S(G,M)$.

An element $u\in M$ is called an \emph{order unit} of $(G,M)$ if $G=
M+\Z u$, or equivalently, if for every $x\in G$ there is $n\in\N$
with $x\le_Mnu$. In general, there need not exist any order unit, not
even when $G=M-M$ (which clearly is a necessary condition).
\end{lab}

\begin{example}
If $A$ is a ring and $M\subset A$ is an additive semigroup
containing~$1$, then $M$ is archimedean (see \ref{archrappl}) if and
only if $1$~is an order unit of $(A,M)$.
\end{example}

\begin{example}
A typical and frequently used example is when $G=V$ is a vector space
over $\R$ (of any dimension) and $M$ is a \emph{convex cone} in $V$,
i.\,e., $M$ is non-empty and satisfies $M+M\subset M$ and $\R_\plus M
\subset M$. The convex cone $S(V,M)$ of all states of $(V,M)$ is
equal to the \emph{dual cone}
$$M^*\>=\>\bigl\{\varphi\in V^\du\colon\varphi|_M\ge0\bigr\}$$
of $M$ (regarded as sitting in the dual linear space $V^\du$),
provided that $V=M-M$. (If $M$ does not span $V$, there exist
additive maps $V\to\R$ vanishing on $M$ which are not $\R$-linear.)

The order units of $(V,M)$ are also known under the name
\emph{algebraic interior points} of $M$ (e.\,g.\ \cite{Koe} p.~177,
\cite{Ba} III.1.6). In particular, when $\dim(V)<\infty$, the order
units of $(V,M)$ are precisely the interior points of $M$ with
respect to the euclidean topology on $V$. Hence, in this case, an
order unit exists if and only if $V=M-M$.
\end{example}

\begin{lab}
Assume that $(G,M)$ has an order unit $u$. Then every nonzero state
$\varphi$ of $(G,M)$ satisfies $\varphi(u)>0$. We say that $\varphi$
is a \emph{monic state} of $(G,M,u)$, or for brevity, simply a
\emph{state} of $(G,M,u)$,
if $\varphi(u)=1$. The set of all monic states will be denoted
$S(G,M,u)$.

The set $S(G,M,u)$ can be
regarded as a subset of the product vector space $\R^G=\prod_G\R$. As
such it is compact and convex. A state $\varphi\in S(G,M,u)$
is called a \emph{pure state} of $(G,M,u)$ if it is an extremal point
of the compact convex set $S(G,M,u)$, or equivalently, if
$2\varphi=\varphi_1+\varphi_2$ with $\varphi_1$, $\varphi_2\in
S(G,M,u)$ implies $\varphi=\varphi_1=\varphi_2$.

By the Krein-Milman theorem, the convex hull of the set of pure
states of $(G,M,u)$ is dense in $S(G,M,u)$. Using this fact together
with the Eidelheit-Kakutani separation theorem (\cite{Ei}, \cite{Kk},
see also \cite{Ba} III.1.7), one can prove the following fundamental
result. Originally it is due to
Effros, Handelman and Shen \cite{EHS} (see also Lemma 4.1 in \cite{GH}
and Theorem 4.12 in \cite{Go}).
\end{lab}

\begin{thm}\label{goodhandl}
Let $G$ be an abelian group and $M\subset G$ a semigroup in $G$ with
order unit~$u$. Let $x\in G$. If $\varphi(x)>0$ for every pure state
$\varphi$ of $(G,M,u)$, there is an integer $n\ge1$ with $nx\in M$.
\qed
\end{thm}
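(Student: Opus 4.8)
The plan is to prove the contrapositive: if no positive integer multiple of $x$ lies in $M$, then there is a pure state $\varphi$ of $(G,M,u)$ with $\varphi(x)\le0$. First I would tensor with $\Q$ to pass to the $\Q$-vector space $V=G\otimes\Q$ (or just work with $\Q\otimes G$ directly), carrying $M$ to the convex cone $\widetilde M=\Q_\plus M$ it generates; the hypothesis $nx\notin M$ for all $n\ge1$ translates into $x\notin\widetilde M$, and $u$ remains an order unit, i.e.\ an algebraic interior point of $\widetilde M$. The point of this reduction is that now I can separate $x$ from $\widetilde M$ by a linear functional using a Hahn--Banach type argument rather than having to work inside a group.

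Next, since $u$ is an algebraic interior point of the convex cone $\widetilde M$ and $x\notin\widetilde M$, the Eidelheit--Kakutani separation theorem (cited in the excerpt as \cite{Ei}, \cite{Kk}) gives a nonzero linear functional $\psi$ on $V$ with $\psi|_{\widetilde M}\ge0$ and $\psi(x)\le0$. Because $u$ is an order unit, $\psi(u)>0$ (any state vanishing at an order unit is forced to vanish everywhere, since every element is bounded above by a multiple of $u$ and below by a multiple of $-u$ up to $\widetilde M$), so after scaling we may assume $\psi(u)=1$; thus $\psi$ restricts to an element of $S(G,M,u)$ (via $G\to V$) with $\psi(x)\le0$. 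At this stage I have produced a \emph{state}, not yet a \emph{pure} state.

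The remaining step is to upgrade $\psi$ to a pure state while keeping the sign condition at $x$. Consider the closed half-space $H=\{\varphi\in S(G,M,u):\varphi(x)\le0\}$, where $S(G,M,u)$ is viewed inside $\R^G$ and is compact and convex. Then $H$ is a nonempty (it contains $\psi$) compact convex subset of the compact convex set $S(G,M,u)$, and moreover it is a \emph{face}: if $2\varphi=\varphi_1+\varphi_2$ with $\varphi_i\in S(G,M,u)$ and $\varphi\in H$, then $\varphi_1(x)+\varphi_2(x)=2\varphi(x)\le0$ forces at least one $\varphi_i$ to lie in $H$ — actually I want both, so instead I argue as follows: by Krein--Milman applied to the nonempty compact convex set $H$, $H$ has an extremal point $\varphi_0$; and because $H$ is an intersection of $S(G,M,u)$ with a supporting (or at least a closed half-) space, an extremal point of $H$ is also an extremal point of $S(G,M,u)$, hence a pure state of $(G,M,u)$, with $\varphi_0(x)\le0$. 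To make the "face" claim clean I would, if necessary, first replace $x$ by $x-\epsilon u$ for small $\epsilon>0$ so that the separating inequality becomes strict, ensuring $\{\varphi(x)=0\}$ meets $S(G,M,u)$ in a genuine face; this is a cosmetic adjustment.

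The main obstacle is the last step: one must be careful that extremal points of the sub-level set $H$ really are extremal in the whole state space. The clean way is the standard fact that if $K$ is compact convex, $\ell$ is an affine function on $K$, and $m=\min_K\ell$, then every extremal point of the face $\{\ell=m\}$ is an extremal point of $K$; applying this with $\ell=\widehat x$ and noting $\min_{S(G,M,u)}\widehat x\le0$ finishes the argument. Everything else — the $\Q$-linearization, the separation theorem, the normalization at $u$ — is routine once the setup is in place.
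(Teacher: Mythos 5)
Your argument is correct and follows essentially the same route the paper indicates for this theorem (which it states with references to Effros--Handelman--Shen and Goodearl rather than a written-out proof): reduction to the $\Q$-vector space $G\otimes\Q$ as in Remark \ref{remsgoodhandl}.1, Eidelheit--Kakutani separation of $x$ from the cone to produce a state with $\varphi(x)\le0$, and then the extreme-point-of-the-minimizing-face argument of Remark \ref{remsgoodhandl}.2 to upgrade it to a pure state. Your final clean version of the last step (extreme points of the set where the affine function $\varphi\mapsto\varphi(x)$ attains its minimum are extreme in $S(G,M,u)$) is exactly the verification the paper sketches, so no changes are needed.
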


\begin{rems}\label{remsgoodhandl}
Let $G$ be an abelian group and $M\subset G$ a semigroup.

1.\
Let $G_\Q=G\otimes\Q$ and $M_\Q=\{x\otimes q\colon x\in M$, $q\in
\Q_\plus\}$. Then $S(G,M)=S(G_\Q,M_\Q)$ holds canonically. If $u\in
M$ is an order unit of $(G,M)$ then $u\otimes1$ is an order unit of
$(G_\Q,\,M_\Q)$ (the converse being false in general), and we have
$S(G,M,u)=S(G_\Q,M_\Q,u\otimes1)$. In this way one reduces the proof
of Theorem \ref{goodhandl} to the case where $G$ is a $\Q$-vector
space and $\Q_\plus M=M$.

2.\
In the situation of Theorem \ref{goodhandl}, $\varphi(x)>0$ holds for
every pure state of $(G,M,u)$ if and only if $\varphi(x)>0$ holds for
every $0\ne\varphi\in S(G,M)$. Therefore, the condition on $x$ in
\ref{goodhandl} is independent of the choice of a particular order
unit. As for the claim, note that the map $S(G,M,u)\to\R$, $\varphi
\mapsto\varphi(x)$ assumes its minimum since $S(G,M,u)$ is compact.
The set of minimizers is compact and convex, and hence has an
extremal point $\varphi$. One verifies that any such $\varphi$ is
also an extremal point of $S(G,M,u)$, i.\,e., a pure state of
$(G,M,u)$.
\end{rems}

\begin{cor}\label{cor2gh}
Assume that $(G,M)$ has an order unit $u$, and that $M$ satisfies
($na\in M$ $\To$ $a\in M$) for every $a\in G$ and $n\in\N$. Let
$x\in G$ with $\varphi(x)>0$ for every pure state $\varphi$ of
$(G,M,u)$. Then $x$ is an order unit of $(G,M)$.
\end{cor}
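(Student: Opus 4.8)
The plan is to deduce Corollary~\ref{cor2gh} from Theorem~\ref{goodhandl} by showing directly that every element $y\in G$ satisfies $y\le_M nu$ for some $n\in\N$; equivalently, that $nu-y\in M$ for some $n$. Fix $y\in G$. Since $u$ is an order unit, there is $m\in\N$ with $y\le_M mu$, i.e. $mu-y\in M$. Now consider the element $z:=2mu-y=(mu-y)+mu$. I claim $\varphi(z)>0$ for every pure state $\varphi$ of $(G,M,u)$: indeed $\varphi(mu-y)\ge0$ since $mu-y\in M$, and $\varphi(mu)=m>0$, so $\varphi(z)\ge m>0$. (More crudely, one may simply apply Theorem~\ref{goodhandl} to $x':=Nu-y$ for $N$ slightly larger than $m$; the point of the argument is to produce \emph{some} element that is both $\le_M$-above $y$ and strictly positive at all pure states.)

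Applying Theorem~\ref{goodhandl} to $z$ yields an integer $n\ge1$ with $nz\in M$, that is, $n(2mu-y)\in M$, i.e. $2mnu-ny\in M$. Here is where the extra hypothesis on $M$ enters: from $n(2mu-y)=(2mnu)-(ny)\in M$ we would like to conclude $2mu-y\in M$, but $M$ need not be divisible. However, we can instead chase the order-unit property directly. Set $a:=2mu-y\in G$. We have shown $na\in M$. By the standing hypothesis ($na\in M\To a\in M$) applied with this $a$ and this $n$, we get $a\in M$, i.e. $2mu-y\in M$, so $y\le_M 2mu$. Since $y\in G$ was arbitrary, this shows $G=M+\Z u$, i.e. $u$ (hence, a fortiori, $x$ once we also treat $x$) is an order unit.

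It remains to see that $x$ itself is an order unit, not merely that $u$ is. For this, note first that $x\in M$: apply Theorem~\ref{goodhandl} to $x$ directly to get $nx\in M$ for some $n\ge1$, then invoke the hypothesis again to conclude $x\in M$. Next, I must bound $u$ above by a multiple of $x$, since an order unit must dominate \emph{every} element of $G$ and in particular $u$. The key subtlety — and the step I expect to require the most care — is to show $u\le_M kx$ for some $k\in\N$. Here I would again try to arrange an auxiliary element: the natural candidate is to apply Theorem~\ref{goodhandl} to an element of the form $kx-u$, which requires $\varphi(kx-u)=k\varphi(x)-1>0$ for all pure states $\varphi$, i.e. $\varphi(x)>\tfrac1k$ uniformly in $\varphi$. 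Since $\varphi\mapsto\varphi(x)$ is continuous on the compact set $S(G,M,u)$ and strictly positive by hypothesis, its minimum $\delta$ over all monic states is positive; choosing any integer $k>1/\delta$ gives $\varphi(kx-u)\ge k\delta-1>0$ for every $\varphi\in S(G,M,u)$, in particular for every pure state. Then Theorem~\ref{goodhandl} produces $n\ge1$ with $n(kx-u)\in M$, and the divisibility hypothesis gives $kx-u\in M$, i.e. $u\le_M kx$. Combining with the fact, just proved, that $u$ is an order unit, for any $y\in G$ we get $y\le_M 2mu\le_M 2mkx$, so $x$ is an order unit of $(G,M)$, as claimed.
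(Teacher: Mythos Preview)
Your argument eventually reaches a correct proof, but the first paragraph is circular and should be deleted. You announce that the plan is to show $y\le_M nu$ for all $y$; you then invoke the hypothesis that $u$ is an order unit to get $y\le_M mu$, go through an application of Theorem~\ref{goodhandl}, and conclude $y\le_M 2mu$, i.e.\ that $u$ is an order unit. That is exactly what you assumed. The phrase ``hence, a fortiori, $x$ once we also treat $x$'' signals that you realized midway that the wrong element was being discussed; the entire paragraph can simply be dropped.

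The substance of the proof is in your last paragraph, and it is correct and close in spirit to the paper's argument. One small point: you assert that $\varphi\mapsto\varphi(x)$ is ``strictly positive by hypothesis'' on all of $S(G,M,u)$, but the hypothesis only gives positivity at \emph{pure} states. You need Remark~\ref{remsgoodhandl}.2 (or the underlying Krein--Milman argument that the minimum of a continuous affine function on a compact convex set is attained at an extreme point) to pass to all monic states. The paper's proof makes the same move, using continuity of $\varphi\mapsto\varphi(y)/\varphi(x)$ on $S(G,M,u)$.

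The paper's route is slightly more direct: rather than first proving $u\le_M kx$ and then composing with $y\le_M mu$, it bounds $|\varphi(y)/\varphi(x)|$ uniformly over $S(G,M,u)$ to find $n$ with $\varphi(nx\pm y)>0$ for all monic states, and then applies Theorem~\ref{goodhandl} once to $nx\pm y$. Your detour through $u$ is harmless, but unnecessary.
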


\begin{proof}
$x\in M$ by a direct application of Theorem \ref{goodhandl}, using
the assumption on $M$. Given $y\in G$, the map $\varphi\mapsto\frac
{\varphi(y)}{\varphi(x)}$ from the (compact convex) set $S(G,M,u)$
to $\R$ is continuous. Hence there is $n\in\N$ with $\bigl|\frac
{\varphi(y)}{\varphi(x)}\bigr|<n$, i.\,e., $\varphi(nx\pm y)>0$, for
every $\varphi\in S(G,M,u)$. Again from \ref{goodhandl} and the
assumption we get $nx\pm y\in M$.
\end{proof}


\section{Order units in rings and ideals}

\begin{dfn}
Let $A$ be a ring and $M\subset A$ an additive semigroup (with $0\in
M$, as always). For $u\in M$ we put
$$O(M,u):=O_A(M,u)\>:=\>\bigl\{a\in A\colon\ex n\in\N\ nu\pm a\in M
\bigr\},$$
or equivalently, $O(M,u)=\supp(M+\Z u)$.
\end{dfn}

So $O(M,u)$ consists of all elements which are bounded ``in absolute
value'' by some positive multiple of $u$, with respect to $\le_M$.

\begin{prop}\label{oprop}
Let $M$, $M_1$, $M_2$ be additive semigroups in $A$.
\begin{itemize}
\item[(a)]
Let $u\in M$. Then $O(M,u)$ is an additive subgroup of $M-M\subset A$
containing $\supp(M)+\Z u$.
\item[(b)]
$O(M_1,u_1)\cdot O(M_2,u_2)\subset O(M_1M_2,u_1u_2)$ for all $u_1
\in M_1$, $u_2\in M_2$, where $M_1M_2$ denotes the semigroup in $A$
generated by all products $x_1x_2$ with $x_i\in M_i$ ($i=1,2$).
\item[(c)]
Let $S$ be a semiring in $A$. Then $O(S,1)$ is a subring of $A$, and
$O(S,u)$ is an $O(S,1)$-submodule of $A$ for every $u\in S$.
\item[(d)]
Assume that $\frac12\in A$ and $M$ is a quadratic module. Then
$O(M,1)$ is a subring of $A$, and $O(M,u)$ is an $O(M,1)$-submodule
of $A$ for every $u\in M$ with $uM\subset M$.
\end{itemize}
\end{prop}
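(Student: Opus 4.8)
The plan is to prove the four parts in the order (a), (b), (c), (d), since each later part will lean on the earlier ones. For part (a), I would first observe that $O(M,u) = \supp(M+\Z u) = (M+\Z u)\cap(-(M+\Z u))$ by the stated equivalent description, so it is automatically a subgroup of the ambient group $A$ (being the support of a semigroup). The content to check is that $O(M,u)$ actually lands inside $M-M$: given $a$ with $nu\pm a\in M$ for some $n$, write $a = nu - (nu-a) \in M - M$ using $nu\in M$ (as $u\in M$ and $M$ is a semigroup, assuming $n\ge 1$; the case $n$ could be taken $\ge 1$ WLOG). Finally $\supp(M)\subset O(M,u)$ because if $\pm a\in M$ then $1\cdot u\pm a\in M$, and $\Z u\subset O(M,u)$ trivially since $nu\pm mu\in M$ for suitable $n$; additivity then gives $\supp(M)+\Z u\subset O(M,u)$.

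For part (b), the key algebraic trick is the standard polarization-type identity: if $n_1u_1\pm a_1\in M_1$ and $n_2u_2\pm a_2\in M_2$, I want to bound $a_1a_2$ by a multiple of $u_1u_2$ in $M_1M_2$. The clean way is to note $(n_1u_1\pm a_1)(n_2u_2\pm a_2)\in M_1M_2$ for all four sign choices, and then take suitable $\Z$-combinations: $(n_1u_1+a_1)(n_2u_2+a_2) + (n_1u_1-a_1)(n_2u_2-a_2) = 2n_1n_2\,u_1u_2 + 2a_1a_2$, while $(n_1u_1+a_1)(n_2u_2-a_2)+(n_1u_1-a_1)(n_2u_2+a_2) = 2n_1n_2\,u_1u_2 - 2a_1a_2$. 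Since $M_1M_2$ is closed under addition, these show $2n_1n_2\,u_1u_2 \pm 2a_1a_2\in M_1M_2$, i.e. $2a_1a_2\in O(M_1M_2, u_1u_2)$; by part (a) this is a group, so dividing by $2$ is not allowed directly, but $a_1a_2 = \frac12(2a_1a_2)$ — instead I should observe $O(M_1M_2,u_1u_2)$ is closed under the operation ``$x\mapsto$ element $y$ with $2y=x$ provided such $y\in A$'', which is not automatic. The honest fix: one shows $N u_1u_2 \pm a_1a_2 \in M_1 M_2$ directly by a single product — namely $a_1a_2$ itself is a difference of elements of $M_1M_2$ and is bounded, and since products of generators of $O$ already include $a_1a_2$ up to the factor-of-2 sums above, and $O$ being a group means closure under the subgroup generated, $a_1a_2 \in O(M_1M_2,u_1u_2)$. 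I expect this factor-of-$2$ bookkeeping to be the main technical nuisance; the cleanest route is probably to accept the paper's standing convention $\Q_+M\subset M$, or to note that membership in $O$ only requires \emph{some} $n$, and $2a_1a_2\in O$ forces $a_1a_2\in O$ since $O$ is a group and one can always double $n$.

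For part (c), apply (b) with $M_1=M_2=S$ and $u_1=u_2=1$: since $S$ is a semiring, $S\cdot S\subset S$, so the semigroup generated by products lands in $S$ and $1\cdot 1 = 1$, giving $O(S,1)\cdot O(S,1)\subset O(SS,1)\subset O(S,1)$; combined with part (a) ($O(S,1)$ is an additive group containing $\Z\cdot 1$, hence containing $1$), this makes $O(S,1)$ a subring. For the module claim, apply (b) with $M_1=S$, $u_1=1$, $M_2=S$, $u_2=u$: then $O(S,1)\cdot O(S,u)\subset O(SS,u)\subset O(S,u)$, and since $O(S,u)$ is an additive group by (a), it is an $O(S,1)$-submodule of $A$. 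Part (d) is the same pattern once we check the quadratic module $M$ behaves like a semiring ``up to $\frac12$'': with $\frac12\in A$ we have the identity $4ab=(a+b)^2-(a-b)^2$, so $O(M,1)\cdot O(M,1)\subset O(M,1)$ can be deduced from (b) — take $M_1=M_2=M$, noting $\Sigma A^2\cdot M\subset M$ means $M\cdot M$ need not lie in $M$, but the \emph{squares} of elements of $M-M$ do lie in $\Sigma A^2\subset M$, so $O(M,1)^2\subset O(\Sigma A^2,1)\subset O(M,1)$, and then $ab = \frac14((a+b)^2-(a-b)^2)\in O(M,1)$ since $O(M,1)$ is a group containing $\frac14\cdot(\text{those squares})$. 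For $O(M,u)$ being an $O(M,1)$-module when $uM\subset M$: apply (b) with $M_1=\Sigma A^2$ (so $M_1M_2$-products of squares with elements of $M$ land in $M$ by the quadratic module axiom, using also $uM\subset M$ to absorb $u$), giving $O(\Sigma A^2,1)\cdot O(M,u)\subset O(M,u)$; then extend to $O(M,1)\supset O(\Sigma A^2,1)$ via the $4ab$ identity and the group structure from (a). The one place to be careful is matching up exactly which semigroup $M_1M_2$ is in each invocation of (b) and checking $uM\subset M$ is genuinely used in (d) to keep products inside $M$; that, and the recurring factor-of-$2$ (or factor-of-$4$) divisions, are where I'd slow down and write things out.
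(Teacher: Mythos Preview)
Your overall structure is right, and parts (a) and (c) are fine. But there is a genuine gap in part (b), and your treatment of (d) inherits related problems.

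\textbf{The gap in (b).} You correctly derive $2n_1n_2\,u_1u_2 \pm 2a_1a_2 \in M_1M_2$, hence $2a_1a_2 \in O(M_1M_2,u_1u_2)$. But your claimed passage from this to $a_1a_2 \in O(M_1M_2,u_1u_2)$ does not work: $O(M_1M_2,u_1u_2)$ is only a subgroup of $A$, and subgroups are \emph{not} closed under halving (think of $2\Z\subset\Z$). ``Doubling $n$'' does nothing --- you need some $N$ with $Nu_1u_2 \pm a_1a_2 \in M_1M_2$, and knowing $Nu_1u_2 \pm 2a_1a_2 \in M_1M_2$ for large $N$ gives no such thing. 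Invoking $\Q_\plus M\subset M$ is not legitimate either, since the proposition is stated for arbitrary additive semigroups in an arbitrary ring.

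The paper sidesteps this with an asymmetric three-term identity:
\[
3n_1n_2\,u_1u_2+\epsilon a_1a_2 \;=\; (n_1u_1+a_1)(n_2u_2+\epsilon a_2) + n_1u_1(n_2u_2-\epsilon a_2) + n_2u_2(n_1u_1-a_1)
\]
for $\epsilon=\pm1$. Each summand on the right is a product of an element of $M_1$ with an element of $M_2$, so the sum lies in $M_1M_2$; this gives $a_1a_2\in O(M_1M_2,u_1u_2)$ directly, with no division.

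\textbf{The problem in (d).} Your reduction of (d) to (b) is not sound as written. Applying (b) with $M_1=M_2=M$ only yields $O(M,1)\cdot O(M,1)\subset O(MM,1)$, and for a quadratic module $MM\not\subset M$ in general, so this says nothing about $O(M,1)$. Your alternative line ``$O(M,1)^2\subset O(\Sigma A^2,1)$'' would require knowing $a\in O(\Sigma A^2,1)$, which is not given. The paper instead proves $a^2\in O(M,1)$ by a direct identity: with $m\pm a\in M$ and any integer $r>m/2$,
\[
(r-a)^2(m+a)+(r+a)^2(m-a)=2r^2m-2(2r-m)a^2,
\]
so $a^2\le_M \frac{r^2m}{2r-m}$, and $a^2\ge_M 0$ is automatic. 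Then $4ab=(a+b)^2-(a-b)^2$ finishes the ring claim (here $\frac12\in A$ is genuinely used). The module statement for $O(M,u)$ is handled similarly by bounding $a^2x$ via $a^2u$ and using $uM\subset M$ to control $a^2u$; your sketch of this last step is closer to correct but still leans on the unproved square bound.
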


\begin{proof}
(a) is obvious. For the proof of (b) let $a_i\in O(M_i,u_i)$, say
$n_iu_i\pm a_i\in M_i$ with $n_i\in\N$ ($i=1,2$). From
\begin{eqnarray*}
3n_1n_2\,u_1u_2+\epsilon a_1a_2 & = & (n_1u_1+a_1)(n_2u_2+\epsilon
  a_2) \\
&& +n_1u_1(n_2u_2-\epsilon a_2)+n_2u_2(n_1u_1-a_1)
\end{eqnarray*}
for $\epsilon=\pm1$ we see $a_1a_2\in O(M_1M_2,u_1u_2)$.

(c) is an immediate consequence of (b). To prove (d) let $a\in
O(M,1)$, say $m\pm a\in M$. If $r>\frac m2$ is an integer, the
identity
$$(r-a)^2(m+a)+(r+a)^2(m-a)=2r^2m-2(2r-m)a^2$$
shows $a^2\in O(M,1)$. Given another element $b\in O(M,1)$, we get
$ab\in O(M,1)$ from $4ab=(a+b)^2-(a-b)^2$. So $O(M,1)$ is a subring
of $A$.

Now let $u\in M$ with $uM\subset M$, let $x\in O(M,u)$ and let $a\in
O(M,1)$ be as before. We have $nu\pm x\in M$ for some $n\in\N$, i.\,e.\
$\pm x\le_Mnu$. Multiplying with $a^2$ gives $\pm a^2x\le_Mna^2u$. By
what was said before there is $k\in\N$ with $a^2\le_Mk$. Using $uM
\subset M$ we conclude $a^2u\le_Mku$, and therefore $\pm a^2x\le_M
nku$. This shows $a^2\cdot O(M,u)\subset O(M,u)$ for every $a\in
O(M,1)$, and $O(M,u)$ is an $O(M,1)$-submodule of $A$.
\end{proof}

\begin{rems}\label{remsomu}
\hfil

1.\
If $M\subset A$ is a semigroup containing~$1$, then $M$ is
archime\-dean (\ref{archrappl}) if and only if $O(M,1)=A$.

2.\
More generally, let $M\subset A$ be any semigroup and $u\in M$. Then
$O(M,u)$ is the largest subgroup $B$ of $A$ containing $u$ with the
property that $u$ is an order unit of $(B,M\cap B)$.

3.\
The rings $O(M,1)$ were introduced in \cite{Sw:crelle}, in the case
where $M$ is a preordering. The fundamental result proved in
\cite{Sw:crelle} is that when $A$ is an $\R$-algebra of finite
transcendence degree~$d$ and $T\subset A$ is a preordering, then
$O(T,1)$ coincides with $H^d(A,T)$, the $d$~times iterated ring of
geometrically bounded elements. (See \emph{loc.~cit.} for precise
details.)

4.\
A special case of the just mentioned result is the celebrated theorem
of Schm\"udgen \cite{Sm}: If $A$ is a finitely generated $\R$-algebra
and $T\subset A$ is a finitely generated preordering, then $T$ is
archimedean if (and only if) the basic closed set $X(T)$ is compact.

5.\
The article \cite{JP} (see also \cite{PD} and \cite{Ma:book}) is
concerned with the question when quadratic modules are archimedean.
In general, this is much more subtle than for preorderings.

6.\
Let $K\subset\R^n$ be a nonempty compact convex polyhedron, described
by linear inequalities $g_1\ge0,\dots,g_s\ge0$. Let $S$ be the
semiring generated in the polynomial ring $\R[\x]:=\R[x_1,\dots,x_n]$
by $\R_\plus$ and $g_1,\dots,g_s$. By a classical theorem of
Minkowski (Thm.\ 5.4.5 in \cite{PD}), the cone $\R_\plus+\R_\plus g_1
+\dots+\R_\plus g_s\subset S$ contains every linear polynomial which
is nonnegative on $K$. Using compactness of $K$ it follows that
$O(S,1)$ contains all linear polynomials. Since $O(S,1)$ is a subring
of $\R[\x]$ (\ref{oprop}(c)), it follows that $S$ is archimedean.
\end{rems}

\begin{cor}
Let $S\subset A$ be a semiring and $M\subset A$ an $S$-module. Let
$I$, $J$ be ideals of $A$ such that $(I,S\cap I)$ has an order unit
$u$ and $(J,M\cap J)$ has an order unit $v$. Then $uv$ is an order
unit of $(IJ,\,M\cap IJ)$.
\end{cor}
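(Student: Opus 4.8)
The plan is to route everything through Proposition~\ref{oprop} and the subgroups $O_A(\cdot,\cdot)$ of $A$. First I translate the two hypotheses into inclusions. To say that $u$ is an order unit of $(I,S\cap I)$ is the same as to say that $I\subseteq O_A(S\cap I,u)$ (compare Remark~\ref{remsomu}.2): given $a\in I$, applying the order-unit condition to both $a$ and $-a$ and then absorbing a suitable nonnegative integer multiple of $u\in S\cap I$ yields $n\in\N$ with $nu\pm a\in S\cap I$, whence $a\in O_A(S\cap I,u)$. (In fact $O_A(S\cap I,u)=\supp\bigl((S\cap I)+\Z u\bigr)\subseteq I$, so there is even equality, but only the inclusion $\subseteq$ is needed.) Likewise $J\subseteq O_A(M\cap J,v)$. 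I also record at the outset that $uv\in M\cap IJ$, since $u\in S$, $v\in M$ give $uv\in SM\subseteq M$, while $u\in I$, $v\in J$ give $uv\in IJ$.

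Next I apply Proposition~\ref{oprop}(b) with $M_1=S\cap I$, $u_1=u$, $M_2=M\cap J$, $u_2=v$: for every $x\in I$ and $y\in J$ we get $xy\in O_A\bigl((S\cap I)(M\cap J),\,uv\bigr)$. Now the semigroup $(S\cap I)(M\cap J)$ is contained in $M\cap IJ$, because each generating product $sy$ with $s\in S\cap I$ and $y\in M\cap J$ lies both in $SM\subseteq M$ and in $IJ$, and $M\cap IJ$ is closed under addition. Since $O_A(N,w)$ is visibly monotone in its first argument, this gives $xy\in O_A(M\cap IJ,uv)$ for all $x\in I$ and $y\in J$.

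Finally, $O_A(M\cap IJ,uv)$ is an additive subgroup of $A$ by Proposition~\ref{oprop}(a), and the ideal product $IJ$ is generated as an additive group by the products $xy$ with $x\in I$, $y\in J$; hence $IJ\subseteq O_A(M\cap IJ,uv)$. Unwinding the definition of $O_A$, this says exactly that for every $a\in IJ$ there is $n\in\N$ with $nuv-a\in M\cap IJ$, which together with $uv\in M\cap IJ$ is precisely the assertion that $uv$ is an order unit of $(IJ,M\cap IJ)$. I expect no genuine obstacle here: all the substance sits in Proposition~\ref{oprop}(b) (that is, in the three-term polynomial identity used to prove it), and the only points that need a little care are the bookkeeping of the first step and the semigroup inclusion $(S\cap I)(M\cap J)\subseteq M\cap IJ$ that lets monotonicity of $O_A$ carry the rest.
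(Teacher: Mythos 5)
Your proof is correct and follows essentially the same route as the paper's: translate the order-unit hypotheses into the inclusions $I\subset O_A(\cdot,u)$ and $J\subset O_A(\cdot,v)$, feed them into Proposition~\ref{oprop}(b), and use that $O_A(\cdot,uv)$ is an additive group containing the generators of $IJ$. The only cosmetic difference is that you apply \ref{oprop}(b) to the cut-down semigroups $S\cap I$ and $M\cap J$ (hence need the easy inclusion $(S\cap I)(M\cap J)\subset M\cap IJ$), whereas the paper applies it directly to $S$ and $M$; the substance is identical.
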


\begin{proof}
The hypotheses say $I\subset O(S,u)$ and $J\subset O(M,v)$. By
\ref{oprop}(b) we have $IJ\subset O(M,uv)$, which is precisely what
was claimed.
\end{proof}

\begin{prop}\label{surpris}
Assume that $M$ is a pseudomodule over an archimedean semiring $S$ in
$A$. Then
$$O(M,f)\>=\>\supp(M+Af)$$
for every $f\in M$, and this is an ideal of $A$.
\end{prop}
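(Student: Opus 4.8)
The plan is to prove the two inclusions $O(M,f)\subseteq\supp(M+Af)$ and $\supp(M+Af)\subseteq O(M,f)$ separately, and then to note that the archimedean hypothesis forces $\supp(M+Af)$ to be an ideal. Before starting I would record three elementary facts. First, $S$ contains $\N$ (it is a semiring with $0,1\in S$ closed under addition), and since $S$ is archimedean we have $A=\Z+S$ by \ref{archrappl}, hence $A=\Z+S\subseteq(S-S)+S=S-S$, so $S-S=A$. Second, $Af$ is an $S$-pseudomodule (as $SA\subseteq A$), so $M+Af$ is an $S$-pseudomodule, and therefore $\supp(M+Af)$ is not merely a subgroup but an ideal of $A$ (Section~1). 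Third, from $f\in M$ and $SM\subseteq M$ we get $sf\in M$ for every $s\in S$; combined with transitivity and additivity of $\le_M$, this lets me pass from a relation $k\pm b\in S$ (with $k\in\N$, $b\in A$) to $-kf\le_M bf\le_M kf$.

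The inclusion $O(M,f)\subseteq\supp(M+Af)$ is immediate: if $n\in\N$ satisfies $nf\pm a\in M$, then $a=(nf+a)+(-n)f\in M+Af$ and likewise $-a=(nf-a)+(-n)f\in M+Af$, so $a\in\supp(M+Af)$.

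The substance is the reverse inclusion, where the only genuine idea enters. Given $a\in\supp(M+Af)$, write $a=m_1+b_1f$ and $-a=m_2+b_2f$ with $m_1,m_2\in M$ and $b_1,b_2\in A$. Since $m_i\ge_M0$, discarding these terms gives $a\ge_M b_1f$ and $-a\ge_M b_2f$, i.e. $a\le_M-b_2f$. Applying the archimedean property of $S$ to $-b_1$ and to $-b_2$ and taking the larger of the two resulting integers, I obtain a single $k\in\N$ with $k+b_1\in S$ and $k+b_2\in S$; then $(k+b_1)f\in M$ and $(k+b_2)f\in M$, that is, $b_1f\ge_M-kf$ and $-b_2f\le_M kf$. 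Chaining the inequalities yields $-kf\le_M a\le_M kf$, i.e. $kf\pm a\in M$, so $a\in O(M,f)$. Together with the first two paragraphs this gives the asserted equality and, via $S-S=A$, that it is an ideal.

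I do not expect a serious obstacle. The only step requiring a moment's thought is the reverse inclusion: the trick is to represent \emph{both} $a$ and $-a$ in $M+Af$ and then drop the $M$-components (legitimate precisely because they are $\ge_M0$), which reduces everything to bounding the ring coefficients $b_1,b_2$ against $f$ — exactly what archimedeanness of $S$ supplies. The only care needed is to make the integer bounds on $b_1$ and $b_2$ simultaneous (take the maximum) and to keep $k\ge1$, so that the conclusion $a\in O(M,f)$ is witnessed by a genuine positive multiple of $f$.
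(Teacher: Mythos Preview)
Your proof is correct and follows essentially the same route as the paper's: the easy inclusion $O(M,f)=\supp(M+\Z f)\subset\supp(M+Af)$, then for the reverse inclusion you represent both $a$ and $-a$ in $M+Af$ and use the archimedean property of $S$ to replace the ring coefficients by integers, exactly as the paper does (the paper writes it as the single line $nf-g=(n-b)f+y$, $nf+g=(n+a)f+x$, which is your chain of $\le_M$-inequalities unpacked). Your justification that $\supp(M+Af)$ is an ideal via $S-S=A$ also matches the paper's.
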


\begin{proof}
$\supp(M+Af)$ is an ideal since it is stable under multiplication
with $S$ and since $S+\Z=A$. The inclusion $O(M,f)=\supp(M+\Z f)
\subset\supp(M+Af)$ is clear. Conversely let $g\in\supp(M+Af)$, say
$g=x+af=-y+bf$ with $x,y\in M$ and $a$, $b\in A$. Since $S$ is
archimedean, there is $n\in\N$ with $n\pm a\in S$ and $n\pm b\in S$.
Therefore $nf-g=(n-b)f+y$ and $nf+g=(n+a)f+x$ lie in $M$, which shows
$g\in O(M,f)$.
\end{proof}

Here is an equivalent formulation:

\begin{cor}\label{zeugen}
Let $M$ be a pseudomodule over an archimedean semiring in $A$, and
let $f\in M$. Then $f$ is an order unit of $(I,M\cap I)$ where $I:=
\supp(M+Af)$ (an ideal of $A$).
\end{cor}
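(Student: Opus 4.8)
The plan is to read this off immediately from Proposition~\ref{surpris} together with the definition of $O(M,f)$ (equivalently, Remark~\ref{remsomu}.2). Concretely, the first step is to invoke Proposition~\ref{surpris}: since $M$ is a pseudomodule over an archimedean semiring, we have $I=\supp(M+Af)=O(M,f)$, and this set is an ideal of $A$. So the task reduces to showing that $f$ is an order unit of $\bigl(O(M,f),\,M\cap O(M,f)\bigr)$, which is exactly the content of Remark~\ref{remsomu}.2 specialized to $u=f$.

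If one prefers to argue directly rather than cite \ref{remsomu}.2, the remaining steps are routine. First observe $f\in I$: indeed $f\in M\subset M+Af$ and $-f=(-1)\cdot f\in Af\subset M+Af$, so $f\in\supp(M+Af)=I$ (alternatively, $f\in\supp(M)+\Z f\subset O(M,f)$ by \ref{oprop}(a)). Next, unwind the definition of order unit: we must check that for every $g\in I$ there is $n\in\N$ with $\pm g\le_{M\cap I}nf$, i.e.\ $nf\pm g\in M\cap I$. By definition of $O(M,f)$ and the identification $I=O(M,f)$, there is $n\in\N$ with $nf\pm g\in M$; and since $I$ is a subgroup of $A$ containing both $f$ and $g$, we also have $nf\pm g\in I$, hence $nf\pm g\in M\cap I$. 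Together with $f\in M\cap I$ and the fact that $M\cap I$ is an additive semigroup containing $0$, this says precisely that $f$ is an order unit of $(I,M\cap I)$.

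There is essentially no obstacle here: the corollary is a reformulation of Proposition~\ref{surpris}, and the only point requiring the slightest care is to note that the bounds $nf\pm g$ automatically lie in $I$ (not merely in $M$), which is immediate because $I$ is an ideal, hence a subgroup, containing $f$.
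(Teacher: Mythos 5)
Your proof is correct and follows essentially the same route as the paper: invoke Proposition~\ref{surpris} to identify $I=\supp(M+Af)$ with $O(M,f)$, then conclude via Remark~\ref{remsomu}.2 (the paper's proof is exactly this, with your direct unwinding just filling in the details the paper leaves to the reader). The extra care you take to note $f\in I$ and that the bounds $nf\pm g$ lie in $I$ because $I$ is a subgroup is precisely what makes the citation of \ref{remsomu}.2 legitimate.
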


\begin{proof}
The inclusion $I\subset O(M,f)$, which holds by \ref{surpris}, means
that $f$ is an order unit of $(I,M\cap I)$ (see \ref{remsomu}).
\end{proof}

Using the Goodearl-Handelman criterion, we can give still another
formulation:

\begin{cor}\label{moral}
Assume $\Q\subset A$. Let $S$ be an archimedean semiring in $A$ with
$\Q_\plus\subset S$, let $M$ be a pseudomodule over $S$, and let
$f\in A$ be fixed. Then $f\in M$ if and only if there exists an ideal
$I\subset A$ with $f\in I$ having the following two properties:
\begin{itemize}
\item[(1)]
$(I,M\cap I)$ has an order unit $u$;
\item[(2)]
$\varphi(f)>0$ for every pure state $\varphi$ of $(I,M\cap I,u)$.
\end{itemize}
Moreover, when $f\in M$, the ideals $I$ with the above properties are
precisely the ideals satisfying $Af\subset I\subset\supp(M+Af)$.
\end{cor}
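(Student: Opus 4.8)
The plan is to establish both the main equivalence and the final description of admissible ideals by playing the Goodearl-Handelman criterion (Theorem \ref{goodhandl}) off against the structural result Corollary \ref{zeugen}. First I would prove the easy direction: suppose $f\in M$. Then $f\in M\cap Af$, and by Corollary \ref{zeugen} (applied with this $f$, using that $M$ is a pseudomodule over an archimedean semiring), the element $f$ is an order unit of $(I_0,M\cap I_0)$ where $I_0:=\supp(M+Af)$. So $I:=I_0$ satisfies (1) with $u:=f$; and (2) holds trivially for this choice since every monic state $\varphi$ of $(I_0,M\cap I_0,f)$ has $\varphi(f)=1>0$. This already shows that an ideal $I$ with properties (1), (2) exists whenever $f\in M$.

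Next I would prove the converse: suppose there is an ideal $I\ni f$ satisfying (1) and (2). Apply Theorem \ref{goodhandl} to the abelian group $G=I$, the semigroup $M\cap I$, the order unit $u$, and the element $x=f$: condition (2) says $\varphi(f)>0$ for every pure state, so there is an integer $n\ge1$ with $nf\in M\cap I\subset M$. Finally, since we are assuming $\Q\subset A$ and $\Q_\plus\subset S$, we have $\frac1n\in S$, hence $f=\frac1n\cdot(nf)\in SM\subset M$. (This is exactly the harmless passage flagged in \ref{xplgen}: $nf\in M\iff f\in M$ once $\Q_\plus\subset S$.) This completes the ``if and only if''.

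For the ``moreover'' part, assume $f\in M$ and let $I$ be any ideal with $f\in I$; I must show that $I$ satisfies (1) and (2) precisely when $Af\subset I\subset\supp(M+Af)$. For one direction: if $Af\subset I\subset I_0:=\supp(M+Af)$, then since $f$ is an order unit of $(I_0,M\cap I_0)$ by Corollary \ref{zeugen}, i.e.\ $I_0\subset O(M,f)$, we get $I\subset O(M,f)$ as well, so $f$ is an order unit of $(I,M\cap I)$; thus (1) holds with $u=f$, and then (2) is automatic as above (any admissible order unit gives the same condition on $f$ by the remark following \ref{goodhandl}, but here it is cleanest just to take $u=f$). Conversely, suppose $I$ satisfies (1) and (2). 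The inclusion $Af\subset I$ is forced simply because $f\in I$ and $I$ is an ideal. For $I\subset I_0$: take any $g\in I$; since $u$ is an order unit of $(I,M\cap I)$ there is $n$ with $nu\pm g\in M\cap I$, and since $u\in I\subset O(M+Af,\cdot)$ — more directly, $I\subset O(M,u)$, and one checks $u\in\supp(M+Af)$ because... here is the point that needs a little care: I want $O(M,u)\subset\supp(M+Af)$, which follows once $u\in\supp(M+Af)$, because $\supp(M+Af)$ is an ideal (Proposition \ref{surpris}) and $O(M,u)\subset Au+\supp(M)\subset\supp(M+Af)$ as soon as $u\in\supp(M+Af)$. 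And $u\in\supp(M+Af)$ holds since $u\in I$ and, by (2) together with the already-proved ``only if'' direction applied with the same data, $f\in M$, so...

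\medskip

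The step I expect to be the genuine obstacle is precisely this last one: showing that any ideal $I$ meeting (1), (2) is contained in $\supp(M+Af)$, equivalently that the order unit $u$ of $(I,M\cap I)$ automatically lies in $\supp(M+Af)$. The clean way is: by Corollary \ref{zeugen}, $\supp(M+Af)=O(M,f)$; and we need $I\subset O(M,f)$. Now $u\in I$ and (2) gives (via \ref{goodhandl}, as above) an $n$ with $n f\pm u\in M$ after rescaling by $\frac1n\in S$ — wait, more carefully: apply \ref{goodhandl} to $x=Nf-u$ and $x=Nf+u$ for $N$ large, using that $\varphi\mapsto\varphi(u)/\varphi(f)$ is bounded on the compact set $S(I,M\cap I,u')$ (here using $\varphi(f)>0$ by (2)), to get $Nf\pm u\in M\cap I\subset M$ for suitable $N$, i.e.\ $u\in O(M,f)=\supp(M+Af)$. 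Then $I\subset O(M,u)\subset O(M,\supp(M+Af))$... the honest statement is that $u\in\supp(M+Af)$ plus $I\subset O(M,u)$ forces $I\subset\supp(M+Af)$, since $O(M,u)=\supp(M+\Z u)\subset\supp(M+Af)$ once $u\in\supp(M+Af)$ and the latter is an ideal. This boundedness-argument (mimicking the proof of Corollary \ref{cor2gh}) is the crux; everything else is bookkeeping with the definitions of order unit, support, and $O(M,-)$.
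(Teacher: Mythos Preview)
Your proof is correct and follows essentially the same route as the paper's: Goodearl--Handelman (Theorem~\ref{goodhandl}) for the ``if'' direction, Corollary~\ref{zeugen} with $u=f$ for the ``only if'' direction, and the identification $O(M,f)=\supp(M+Af)$ from Proposition~\ref{surpris} for the characterization of admissible ideals. The one place where you do more work than necessary is the ``crux'' you flag at the end: the compactness argument showing that (1) and (2) force $f$ itself to be an order unit of $(I,M\cap I)$ is exactly Corollary~\ref{cor2gh} (its hypothesis $na\in M\cap I\Rightarrow a\in M\cap I$ holds because $\Q_\plus\subset S$ and $\Q\subset A$), so you can simply cite it and conclude $I\subset O(M,f)=\supp(M+Af)$ in one line, rather than rederiving the boundedness of $\varphi\mapsto 1/\varphi(f)$ and then threading through $u$.
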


\begin{proof}
If $I$ is an ideal containing $f$ with (1) and (2), then we get
$f\in M$ directly using \ref{goodhandl}.
Conversely assume $f\in M$.
Then $I:=\supp(M+Af)$ has the desired properties. Indeed, $f$ itself
is an order unit of $(I,M\cap I)$ (\ref{zeugen}). The last assertion
in \ref{moral} is also contained in \ref{zeugen}, cf.\ the remark in
\ref{remsomu}.
\end{proof}

\begin{rem}\label{remmoral}
Suppose we have $A$, $S$ and $M$ as before, and are given an element
$f\in A$ of which we want to prove that it lies in $M$. Corollary
\ref{moral} shows a possible way to proceed. In fact, most of the
main results of this paper will be concretizations of this corollary
in one or the other way. At this point, we would like to point out
the need of understanding the following two questions:
\begin{itemize}
\item[(Q1)]
Given an archimedean $S$-module $M$ and an ideal $I$ of $A$, when
does $(I,M\cap I)$ have an order unit $u$?
\item[(Q2)]
If $u$ is such an order unit, what are the pure states of $(I,M\cap
I,u)$?
\end{itemize}
We will address (Q1) in Sect.~\ref{sect:pust} and (Q2) in
Sect.~\ref{sect:ouintersect}.
\end{rem}

\begin{rem}\label{qmkeinezeugen}
Without the archimedean condition on $S$, a result like \ref{zeugen}
is usually far from true. This is demonstrated by the following
example: Let $M=\QM(x,\,y,\,1-x-y)$ in $A=\R[x,y]$, an archimedean
quadratic module by Proposition \ref{oprop}(d), and consider the
element $f=x$ of $M$. Then $\supp(M+Ax)=Ax=:I$, but $x$ is not an
order unit of $(I,M\cap I)$ (or equivalently, $O(M,x)$ is strictly
smaller than $I$). For example, $cx\pm xy\notin M$ for any $c\in\R$,
as is easily seen. In fact, we will show in \ref{nexoe} below that
$(I,M\cap I)$ does not have any order unit at all.
\end{rem}


\section{Pure states on rings and ideals}\label{sect:pust}

In \ref{remmoral} we have seen why it is important to have a good
understanding of the pure states of $(I,M,u)$, where $I$ is an ideal
of $A$ and $M\subset I$ is an $S$-pseudomodule over $S$ with order
unit $u$. We shall now give a satisfactory characterization in two
important cases, namely when $S$ is archimedean, or when $M$ is
archimedean and $S=\Sigma A^2$. These results are variations of a
theorem by Handelman (\cite{Ha1}, Prop.\ 1.2).
The main idea partially appears to some extent already in earlier
work, see Thm.~10 in \cite{BLP} or Thm.~15 in \cite{Kr}.

\begin{prop}\label{eigreinzust}
Let $A$ be a ring and $I\subset A$ an ideal. Let $S\subset A$ be an
archimedean semiring and $M\subset I$ an $S$-pseudomodule, and assume
that $(I,M)$ has an order unit~$u$. Then every pure state $\varphi$
of $(I,M,u)$ satisfies the following multiplicative law:
\begin{equation}\label{multregel}
\all a\in A\ \all b\in I\ \ \varphi(ab)=\varphi(au)\cdot\varphi(b).
\end{equation}
\end{prop}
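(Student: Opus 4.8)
The plan is to exploit extremality of $\varphi$ in the compact convex set $S(I,M,u)$ of monic states. For a fixed element $a\in A$, the idea is to perturb $\varphi$ using multiplication by $a$ and show that the perturbed functionals remain (up to normalization) monic states; extremality then forces them to be proportional to $\varphi$, which yields the multiplicative identity. The key point is that since $S$ is archimedean, for each $a\in A$ there is $n\in\N$ with $n\pm a\in S$, so that $b\mapsto\varphi((n\pm a)b)$ is a nonnegative functional on $M$ (because $(n\pm a)b\in M$ whenever $b\in M$, using that $M$ is an $S$-pseudomodule). This is what replaces the usual analytic "$\|a\|$" bound in the operator-algebra proofs of such Cauchy-Schwarz/extremality arguments.

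More precisely, first I would fix $a\in A$ and choose $n\in\N$ with $n\pm a\in S$. Define $\psi_{\pm}\colon I\to\R$ by $\psi_{\pm}(b)=\varphi((n\pm a)b)$. Each $\psi_\pm$ is additive (clear) and nonnegative on $M$: if $b\in M$ then $(n\pm a)b\in SM\subset M\subset I$, so $\psi_\pm(b)=\varphi((n\pm a)b)\ge0$ since $\varphi$ is a state. Note $\psi_+ + \psi_- = 2n\varphi$. Next, evaluate at the order unit: $\psi_\pm(u)=\varphi((n\pm a)u)=n\varphi(u)\pm\varphi(au)=n\pm\varphi(au)$. Write $\lambda_\pm:=n\pm\varphi(au)$; from $\psi_\pm\ge0$ on $M$ and $u\in M$ being an order unit one gets $\lambda_\pm\ge0$, and in fact if $\lambda_\pm=0$ then $\psi_\pm\equiv0$ on $I$ (a nonzero state cannot vanish on an order unit, since every element of $I$ is $\le_M$ some multiple of $u$ — so $\psi_\pm(b)$ is squeezed between $\pm\lambda_\pm\cdot m$ for suitable $m$). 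If $\lambda_\pm>0$, then $\frac1{\lambda_\pm}\psi_\pm$ is a monic state of $(I,M,u)$. Since $2n\varphi=\psi_++\psi_-=\lambda_+\cdot\frac{\psi_+}{\lambda_+}+\lambda_-\cdot\frac{\psi_-}{\lambda_-}$ with $\lambda_++\lambda_-=2n$, we have exhibited $\varphi$ as a convex combination of the two monic states $\frac1{\lambda_\pm}\psi_\pm$. Extremality of $\varphi$ forces $\frac1{\lambda_+}\psi_+=\varphi$ (the degenerate cases $\lambda_\pm=0$ being handled separately, as then one of $\psi_\pm$ vanishes identically and the other equals $2n\varphi$, again giving the conclusion). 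Thus $\psi_+(b)=\lambda_+\varphi(b)$ for all $b\in I$, i.e.\ $\varphi((n+a)b)=(n+\varphi(au))\varphi(b)$, which rearranges to $\varphi(ab)=\varphi(au)\varphi(b)$ for all $b\in I$.

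The main obstacle I anticipate is the careful bookkeeping around the degenerate cases $\lambda_+=0$ or $\lambda_-=0$, and more fundamentally the lemma that a nonzero state of $(I,M,u)$ cannot vanish at $u$ (equivalently, $\varphi(u)=0$ with $\varphi\ge0$ on $M$ implies $\varphi\equiv0$ on $I=O(M,u)$). That lemma is exactly the order-unit squeeze: for $b\in I$ pick $m\in\N$ with $mu\pm b\in M$, whence $0\le\varphi(mu\pm b)=\pm\varphi(b)$, so $\varphi(b)=0$. With that in hand the convexity/extremality step is routine. One should also double-check that $\psi_\pm$ genuinely lands in states of $(I,M,u)$ and not merely of $(I,M)$ — i.e.\ that one is allowed to renormalize — which is why verifying $\lambda_\pm\ge0$ and isolating the $\lambda_\pm=0$ case matters. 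Finally, note the identity obtained is symmetric in the sense needed: running the argument for $-a$ in place of $a$ gives the $\psi_-$ relation, consistent with what extremality already forced, so nothing further is required.
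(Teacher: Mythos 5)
Your proof is correct and follows essentially the same route as the paper: archimedean domination of $a$ by an integer, positivity of the perturbed functionals $b\mapsto\varphi(sb)$ for $s\in S$ via the $S$-pseudomodule property, and extremality of the pure state $\varphi$ applied to a convex decomposition. The only cosmetic difference is that you symmetrize with $n\pm a$ (so no reduction to $a\in S$ is needed and the degenerate cases become $\varphi(au)=\mp n$), whereas the paper first reduces to $a\in S$ via $A=S+\Z$, writes $\varphi=\varphi_n$ as a convex combination of the localized states $\varphi_a$ and $\varphi_{n-a}$, and disposes of the case $\varphi(au)=0$ by the same order-unit squeeze you use.
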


\begin{lab}\label{lokaliszustd}
Before we start the proof of \ref{eigreinzust}, here are some
preparations. Let $u$ be an order unit of $(I,M)$. Given an additive
map $\varphi\colon I\to\R$, and given any $a\in A$ with $\varphi(au)
\ne0$, let $\varphi_a\colon I\to\R$ be the \emph{localization} of
$\varphi$ by~$a$, defined by
$$\varphi_a(b)\>:=\>\frac{\varphi(ab)}{\varphi(au)}\quad(b\in I).$$
Clearly, $\varphi_a$ is an additive map with $\varphi_a(u)=1$. If
$\varphi$ is a state of $(I,M)$ and $aM\subset M$,
then $\varphi_a$ is a state of $(I,M,u)$. If $a_1$, $a_2\in A$
satisfy $\varphi(a_iu)>0$ ($i=1,2$) then
\begin{equation*}\label{linkomblokzust}
\varphi(a_1u)\cdot\varphi_{a_1}+\varphi(a_2u)\cdot\varphi_{a_2}=
\varphi((a_1+a_2)u)\cdot\varphi_{a_1+a_2},
\end{equation*}
so $\varphi_{a_1+a_2}$ is a proper convex combination of $\varphi_
{a_1}$ and $\varphi_{a_2}$ in this case.
\end{lab}

\begin{lab}\label{pfeigreinzust}
\emph{Proof of Proposition \ref{eigreinzust}}:
In proving \eqref{multregel} we can assume $a\in S$ since $A=S+\Z$.
Fixing $a\in S$ there are two cases:

If $\varphi(au)=0$, we have to show $\varphi(aI)=0$. Now $aI=aM+
\Z au$, and so it is enough to prove $\varphi(aM)=0$. For any $x\in
M$ there is $n\in\N$ with $0\le_Mx\le_Mnu$, whence $0\le_Max\le_M
nau$, from which we get $\varphi(ax)=0$.

There remains the case where $\varphi(au)>0$. Since $S$ is
archimedean there is $n\in\Z$ with $a\le_Sn$. Choosing $n$ so large
that $\varphi(au)<n=\varphi(nu)$, we can consider the localized
(monic) states $\varphi_a$ and $\varphi_{n-a}$. As remarked before,
$\varphi_n=\varphi$ is a proper convex combination of the two. Since
$\varphi$ is a pure state we must have $\varphi_a=\varphi$, which is
identity \eqref{multregel}.
\qed
\end{lab}

The case $I=A$ and $u=1$ deserves special attention:

\begin{cor}\label{koreigreinzust}
Let $M$ be a module over an archimedean semiring in $A$. Then every
pure state of $(A,M,1)$ is a ring homomorphism $A\to\R$.
\qed
\end{cor}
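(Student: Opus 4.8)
The plan is to specialize Proposition~\ref{eigreinzust} to the case $I=A$, $u=1$; the statement is essentially a corollary in the literal sense. First I would verify that $(A,M)$ really does have $1$ as an order unit, so that the phrase ``pure state of $(A,M,1)$'' makes sense and \ref{eigreinzust} applies. Since $M$ is a module over the archimedean semiring $S$, we have $1\in M$ and hence $S=S\cdot1\subset SM\subset M$; thus $M$ contains an archimedean semigroup, so $M$ is itself archimedean (every semigroup containing an archimedean one is archimedean, by \ref{archrappl}), and by the Example following \ref{states} this says precisely that $1$ is an order unit of $(A,M)$. Equivalently, and more directly: given $a\in A$, archimedeanness of $S$ provides $n\in\N$ with $n-a\in S$, whence $n-a=(n-a)\cdot1\in SM\subset M$, i.e.\ $a\le_Mn$.

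Now let $\varphi$ be a pure state of $(A,M,1)$. Applying Proposition~\ref{eigreinzust} with $I=A$ and $u=1$, the multiplicative law \eqref{multregel} reads $\varphi(ab)=\varphi(a\cdot1)\cdot\varphi(b)=\varphi(a)\varphi(b)$ for all $a,b\in A$. Since $\varphi$ is by definition a group homomorphism $A\to\R$ with $\varphi(1)=1$, it is additive, multiplicative, and unital, hence a ring homomorphism $A\to\R$, as claimed.

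There is no genuine obstacle in this argument: all the work sits in Proposition~\ref{eigreinzust}, and the only point worth flagging is the trivial but crucial observation that the seemingly lopsided identity $\varphi(ab)=\varphi(au)\varphi(b)$ degenerates into honest multiplicativity exactly when $u=1$. If one wanted to present the proof with absolutely no appeal to \ref{eigreinzust}, one could instead redo its short argument in this special case (localizing $\varphi$ at $a\in S$ and at $n-a$, and using purity), but invoking the proposition is cleaner and is surely the intended route.
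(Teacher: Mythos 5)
Your proposal is correct and follows exactly the paper's route: the corollary is stated as an immediate specialization of Proposition~\ref{eigreinzust} to $I=A$, $u=1$, where the law $\varphi(ab)=\varphi(au)\varphi(b)$ becomes multiplicativity. Your preliminary check that $1$ is an order unit (since $S\subset M$ forces $M$ to be archimedean) is a harmless extra verification that the paper leaves implicit.
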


A result similar to \ref{eigreinzust} is also true for quadratic
pseudomodules:

\begin{thm}\label{reinzustarchmod}
Let $I$ be an ideal of $A$ and $M\subset I$ a quadratic pseudomodule
with order unit~$u$ of $(I,M)$. Every pure state $\varphi$ of
$(I,M,u)$ satisfies \eqref{multregel} of \ref{eigreinzust}.
\end{thm}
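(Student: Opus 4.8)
The plan is to imitate the proof of Proposition \ref{eigreinzust}, but to compensate for the lack of an archimedean semiring by exploiting that $M$ is a quadratic module, so that squares act as ``nonnegative'' multipliers and small perturbations of $u$ by squares stay inside $M$. As before, in proving \eqref{multregel} it suffices to treat a generating set of $A$ as a $\Z$-module together with multiplicativity properties; but here the natural multipliers are squares $a^2$ with $a\in A$, since these satisfy $a^2M\subset M$. The first reduction is therefore: it is enough to show $\varphi(a^2b)=\varphi(a^2u)\varphi(b)$ for all $a\in A$, $b\in I$; indeed once \eqref{multregel} is known for all elements of $\Sigma A^2$ in place of $a$, it follows for all $a\in A$ because (assuming $\frac12\in A$, or passing to $A_\Q$ as in \ref{xplgen}) every element of $A$ is a $\Z$-linear combination of squares, and both sides of \eqref{multregel} are additive in $a$.

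Second, fix $a\in A$ and set $c=a^2\in\Sigma A^2$. Following \ref{pfeigreinzust}, there are two cases according to whether $\varphi(cu)=0$ or $\varphi(cu)>0$. In the first case I want to conclude $\varphi(cI)=0$, and since $cI=cM+\Z cu$ it suffices to show $\varphi(cM)=0$; for $x\in M$ pick $n$ with $0\le_M x\le_M nu$, multiply by $c=a^2\ge_M 0$ to get $0\le_M cx\le_M ncu$, hence $0\le\varphi(cx)\le n\varphi(cu)=0$. In the second case, the obstacle is that $c$ need not be bounded above by an integer with respect to $\le_M$, so I cannot directly localize $\varphi$ at $n-c$. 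The trick is to localize instead at elements of the form $N-\epsilon c$ that \emph{are} bounded: choose a large integer $N$ and a small rational $\epsilon>0$ so that $N-\epsilon c\in M$ — this is possible because $c O(M,u)\subset O(M,u)$ by Proposition \ref{oprop}(d) applied with the order unit $u$ (note $uM\subset M$ is part of the hypothesis on order units here, since $M\subset I$ and... actually one must check $uM\subset M$), so $cu\in O(M,u)$, i.e.\ $Nu\pm\epsilon' cu\in M$ for suitable $N,\epsilon'$; rescaling, $N-\epsilon c$ acts like a bounded element on $u$ — and so that $\varphi((N-\epsilon c)u)>0$, which holds as soon as $N>\epsilon\varphi(cu)$. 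Then $\varphi_c$ and $\varphi_{N-\epsilon c}$ are both monic states, since $cM\subset M$ and $(N-\epsilon c)M\subset M$ (the latter because $N-\epsilon c\in M\cap O(M,u)$... this needs $(N-\epsilon c)M\subset M$, which one gets from $N\in\Sigma A^2$ trivially and from $cM\subset M$, so $NM\subset M$ minus $\epsilon cM$ — careful, subtraction of modules is not a module operation; instead use that $N-\epsilon c\in \Sigma A^2$ when chosen appropriately, e.g.\ if $N-\epsilon c=\sigma$ is actually a sum of squares). The cleanest route: arrange $N-\epsilon c\in\Sigma A^2$ outright by writing $c=a^2$ and noting $N-\epsilon a^2\in\Sigma A^2$ whenever... this fails in general, so one really does need the order-unit boundedness, and the multiplier property $(N-\epsilon c)M\subset M$ must be derived from $cM\subset M$ together with $NM\subset M$ via the identity trick used in the proof of \ref{oprop}(d).

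Granting that $\varphi_c$ and $\varphi_{N-\epsilon c}$ are monic states and that, by the linear-combination identity in \ref{lokaliszustd},
\begin{equation*}
\varphi(cu)\cdot\varphi_c+\varphi((N-\epsilon c)u)\cdot\varphi_{N-\epsilon c}=\varepsilon^{-1}\varphi(Nu)\cdot(\text{a monic state})
\end{equation*}
exhibits $\varphi$ (up to the scalar, after dividing by $N\varepsilon^{-1}$) as a proper convex combination of $\varphi_c$ and $\varphi_{N-\epsilon c}$, purity of $\varphi$ forces $\varphi_c=\varphi$, which is exactly $\varphi(cb)=\varphi(cu)\varphi(b)$ for all $b\in I$. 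Combined with the first reduction this gives \eqref{multregel}. The main obstacle, and the place where the quadratic-module hypothesis does the real work in place of archimedeanness, is producing the bounded multiplier $N-\epsilon c$ with $(N-\epsilon c)M\subset M$: one must invoke Proposition \ref{oprop}(d) (noting that $u$ being an order unit of $(I,M)$ with $M\subset I$ does give $uM\subset M$, as $u\in M$ and... in fact one should check whether $uM\subset M$ is automatic or must be assumed — if not automatic, the statement's hypothesis ``order unit $u$ of $(I,M)$'' should be read as including it, paralleling \ref{oprop}(d)), so that $a^2\cdot O(M,u)\subset O(M,u)$, hence $a^2u\in O(M,u)$, hence $Nu\pm \delta a^2u\in M$ for some $N\in\N$, $\delta\in\Q_{\plus}$; then verify $(N-\delta a^2)M\subset M$ by multiplying $0\le_M x\le_M mu$ by the nonnegative-acting $N-\delta a^2$ — which itself requires knowing $(N-\delta a^2)u\ge_M 0$ and a boundedness argument, closing the loop. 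Once this technical point is settled the rest is a verbatim adaptation of \ref{pfeigreinzust}.
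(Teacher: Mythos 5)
Your reduction to $a\in\Sigma A^2$ and your treatment of the case $\varphi(au)=0$ coincide with the paper's argument, but in the case $\varphi(au)>0$ there is a genuine gap, located exactly at the point you flag repeatedly without resolving: to use the localization $\varphi_{N-\epsilon c}$ (equivalently, after rescaling, $\varphi_{1-a}$ with $(1-2a)u\in M$) as a \emph{state} of $(I,M,u)$, you need $\varphi\bigl((N-\epsilon c)\,x\bigr)\ge0$ for all $x\in M$, and your only proposed route is to deduce the inclusion $(N-\epsilon c)M\subset M$ from $Nu-\epsilon cu\in M$ and $cM\subset M$ ``via the identity trick of \ref{oprop}(d)''. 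No such deduction exists: $N-\epsilon c$ is not a sum of squares, the boundedness $\epsilon cu\le_M Nu$ only controls its action on the single element $u$, and the identity in the proof of \ref{oprop}(d) shows that squares of $\le_M$-bounded elements are bounded, which gives no multiplier property for $N-\epsilon c$ whatsoever. You give no argument for the inclusion (your own text says ``this fails in general'' and ``closing the loop''), yet the remainder of your proof is declared a verbatim adaptation of \ref{pfeigreinzust}; this missing step \emph{is} the content of the theorem, since for an archimedean semiring the analogous multiplier property $(n-a)M\subset M$ is free, and here it is not.

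The paper does not prove any such inclusion; it proves only the weaker, state-level inequality of Lemma \ref{burglem}: if $aM\subset M$ and $(1-2a)u\in M$, then \emph{every} state of $(I,M)$ is nonnegative on $(1-a)M$. This is where the real work happens, and it is analytic rather than algebraic: writing $(1-a)b=t_n(a)^2b-p_n(a)b$ with $t_n$ the $n$-th Taylor polynomial of $\sqrt{1-x}$, one uses Lemma \ref{taylcoeff} (the coefficients of $p_n=t_n^2-(1-x)$ are nonnegative in $\Z[\frac12]$) together with the inductively obtained bounds $a^ku\le_M2^{-k}u$ to get $\varphi(p_n(a)b)\le\varphi(p_n(a)u)\le p_n(\frac12)<\epsilon$ by convergence of the binomial series, while $t_n(a)^2b\in M$ gives $\varphi(t_n(a)^2b)\ge0$; hence $\varphi((1-a)b)>-\epsilon$ for every $\epsilon>0$. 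Nothing in your proposal substitutes for this lemma, so the argument as written does not go through. A smaller remark: $uM\subset M$ is neither automatic nor needed; the paper's proof uses only $aM\subset M$ for the (scaled) square multiplier and the order-unit property of $u$ (to find $k$ with $au\le_M2^ku$, since $au\in I$), so your detour through Proposition \ref{oprop}(d) and the worry about reading $uM\subset M$ into the hypotheses are both unnecessary.
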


The proof of \ref{reinzustarchmod} is somewhat more tricky. We need
two auxiliary lemmas:

\begin{lem}\label{taylcoeff}
For $n\in\N$ let
$$t_n(x)=\sum_{k=0}^n\choose{1/2}k\,(-x)^k,$$
the $n$-th Taylor polynomial of $\sqrt{1-x}$. Then the polynomial
$t_n(x)^2-(1-x)$ has nonnegative coefficients in $\Z\bigl[\frac12
\bigr]$.
\end{lem}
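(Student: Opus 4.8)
The statement to prove is that $t_n(x)^2 - (1-x)$ has nonnegative coefficients in $\Z[\tfrac12]$, where $t_n$ is the $n$-th Taylor polynomial of $\sqrt{1-x}$. The plan is to exploit the fact that $t_n$ is a truncation of the formal power series $\sqrt{1-x} = \sum_{k\ge0}\binom{1/2}{k}(-x)^k$, together with good control on the \emph{signs} of the coefficients $\binom{1/2}{k}(-x)^k$. First I would record that for $k\ge1$ one has $\binom{1/2}{k}(-1)^k = -c_k$ with $c_k>0$ (and $c_k\in\Z[\tfrac12]$), while the constant term is $+1$; this is an elementary induction, since $\binom{1/2}{k+1}/\binom{1/2}{k} = (1/2-k)/(k+1)$, which is negative for $k\ge1$. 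So $t_n(x) = 1 - \sum_{k=1}^n c_k x^k$ with all $c_k>0$.

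The key algebraic identity I would use is the ``telescoping'' relation coming from squaring a truncated square root. Write $s(x) = \sqrt{1-x}$ as a formal power series, so $s(x)^2 = 1-x$ exactly, and let $r_n(x) := s(x) - t_n(x) = \sum_{k>n} \binom{1/2}{k}(-x)^k$ be the tail, which is $x^{n+1}$ times a power series with coefficients in $\Z[\tfrac12]$ whose signs are all $\le 0$ after the leading adjustment — more precisely $r_n(x) = -\sum_{k>n} c_k x^k$ with $c_k>0$. Then
\begin{equation*}
t_n(x)^2 - (1-x) = t_n(x)^2 - s(x)^2 = -\bigl(2t_n(x)r_n(x) + r_n(x)^2\bigr) = -r_n(x)\bigl(2t_n(x) + r_n(x)\bigr).
\end{equation*}
Here the left-hand side is a polynomial, even though the right-hand side is a priori only a formal power series, so the identity forces all coefficients of $r_n(2t_n+r_n)$ in degree $> 2n$ to vanish. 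Now $-r_n(x) = \sum_{k>n} c_k x^k$ has nonnegative coefficients, and $2t_n(x)+r_n(x) = 2t_n(x) - \sum_{k>n}c_k x^k$; but $2t_n(x) = t_n(x) + (1 - \sum_{k=1}^n c_k x^k)$, and in fact $t_n(x) + r_n(x) = s(x)$, so $2t_n(x)+r_n(x) = t_n(x) + s(x)$. Hence
\begin{equation*}
t_n(x)^2 - (1-x) = -r_n(x)\cdot\bigl(t_n(x)+s(x)\bigr).
\end{equation*}
The factor $-r_n(x)$ has nonnegative coefficients in $\Z[\tfrac12]$. For the other factor, I would argue that $t_n(x) + s(x)$, as a formal power series, has constant term $2$ and all higher coefficients $\le 0$ — indeed $t_n$ contributes $1 - \sum_{k=1}^n c_kx^k$ and $s$ contributes $1 - \sum_{k\ge1}c_kx^k$, so the sum is $2 - \sum_{k\ge1}(c_k + [k\le n]c_k)x^k$, with all the bracketed coefficients positive. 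So $t_n+s$ has \emph{nonpositive} non-constant coefficients, hence $-r_n\cdot(t_n+s)$ decomposes as $2\cdot(-r_n)$ (nonnegative) \emph{minus} a product of two power series each with nonnegative coefficients — the wrong sign. This tells me the naive sign-chase does not quite close, which is the main obstacle: the product structure alone is not enough.

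The fix — and the step I expect to be the real content — is to use instead the identity $s(x)(2 - s(x)) = 1 - (1-s(x))^2$ together with $s(x) = t_n(x) + r_n(x)$ to rewrite things so that \emph{both} surviving factors have nonnegative coefficients. Concretely, I would multiply the power-series identity $t_n(x)^2 - (1-x) = -r_n(x)(t_n(x)+s(x))$ through and substitute $s(x) = 1 - \sum_{k\ge1}c_k x^k$ in a way that pairs the $+2$ constant term of $t_n+s$ against the positive tail of $-r_n$; alternatively, and more robustly, I would prove the stronger auxiliary fact that $1 - t_n(x)^2$ has nonnegative coefficients (equivalently $t_n(x)^2 = 1 - x + (\text{nonneg})$ is the same statement rearranged, using $1-x$ has the single negative coefficient $-1$ at degree $1$, which must be absorbed). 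The cleanest route: show by induction on $n$ that $t_{n}(x)^2 - (1-x)$ has nonnegative coefficients, using the recursion $t_{n+1}(x) = t_n(x) + \binom{1/2}{n+1}(-x)^{n+1} = t_n(x) - c_{n+1}x^{n+1}$, so
\begin{equation*}
t_{n+1}(x)^2 - (1-x) = \bigl(t_n(x)^2 - (1-x)\bigr) - 2c_{n+1}x^{n+1}t_n(x) + c_{n+1}^2 x^{2n+2}.
\end{equation*}
The inductive hypothesis handles the first term; the last term is manifestly nonnegative; the obstacle is the middle term $-2c_{n+1}x^{n+1}t_n(x)$, whose coefficients in degrees $n+1,\dots,2n+1$ are $-2c_{n+1}\cdot(\text{coeff of }t_n)$, and $t_n$ has \emph{negative} coefficients in degrees $1,\dots,n$ and a $+1$ at degree $0$ — so the only genuinely dangerous coefficient is at degree $n+1$, namely $-2c_{n+1}$ (from the degree-$0$ term of $t_n$). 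This single negative contribution must be cancelled by the degree-$(n+1)$ coefficient of $t_n(x)^2-(1-x)$ plus whatever. So I would strengthen the induction to track exactly this: prove simultaneously that $t_n(x)^2-(1-x)$ has nonnegative coefficients \emph{and} that its coefficient at $x^{n+1}$ is at least $2c_{n+1}$ (or compute it exactly as $2c_{n+1}$, since $t_n(x)^2 = t_{n-1}(x)^2 - 2c_n x^n t_{n-1} + \cdots$ and the leading new negative coefficient propagates). This bookkeeping — identifying that the lowest-degree coefficient of $t_n^2-(1-x)$ is precisely $2c_{n+1} = -2\binom{1/2}{n+1}(-1)^{n+1}$ and feeds the induction — is the crux; once it is in place, the nonnegativity of all coefficients and membership in $\Z[\tfrac12]$ (clear, since all $c_k\in\Z[\tfrac12]$) follow immediately.
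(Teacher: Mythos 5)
Your final plan (induction on $n$ via $t_{n+1}=t_n-c_{n+1}x^{n+1}$) is a different route from the paper's, and it correctly isolates the crux: in
\[
t_{n+1}(x)^2-(1-x)=\bigl(t_n(x)^2-(1-x)\bigr)-2c_{n+1}x^{n+1}t_n(x)+c_{n+1}^2x^{2n+2}
\]
the only negative contribution is $-2c_{n+1}$ in degree $n+1$, so everything hinges on the degree-$(n+1)$ coefficient of $p_n:=t_n^2-(1-x)$ being at least $2c_{n+1}$. But this is exactly where the write-up stops short, and the bookkeeping you propose does not close as stated. To push your simultaneous invariant from $n$ to $n+1$ you must bound the degree-$(n+2)$ coefficient of $p_{n+1}$; the recursion gives it as $\mathrm{coeff}_{n+2}(p_n)+2c_{n+1}c_1=\mathrm{coeff}_{n+2}(p_n)+c_{n+1}$, so you would need $\mathrm{coeff}_{n+2}(p_n)\ge 2c_{n+2}-c_{n+1}=c_{n+1}\tfrac{n-1}{n+2}$, which is strictly positive for $n\ge2$, while the inductive hypothesis only gives nonnegativity in that degree. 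The alternative you gesture at (``compute it exactly as $2c_{n+1}$ \dots the leading new negative coefficient propagates'') states the right fact but is not an argument.

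The missing fact has a one-line proof, and it is worth seeing because it is essentially the paper's entire proof. Write $t_n(x)=\sum_{k=0}^na_kx^k$ with $a_k=\binom{1/2}{k}(-1)^k$ (so $a_0=1$, $a_k=-c_k$ for $k\ge1$), the truncation of $s(x)=\sqrt{1-x}$. For $m\ge2$ the full convolution $\sum_{i+j=m,\,i,j\ge0}a_ia_j$ is the $x^m$-coefficient of $s^2=1-x$, hence $0$, while the $x^m$-coefficient of $t_n^2$ is the same sum restricted to $i,j\le n$. For $m=n+1$ the restriction removes exactly the two terms $a_0a_{n+1}+a_{n+1}a_0=-2c_{n+1}$, so $\mathrm{coeff}_{n+1}(p_n)=2c_{n+1}$; this plugs the hole and makes your induction run. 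In fact the same observation settles the lemma outright with no induction: the coefficients of $p_n$ vanish for $m\le n$ and $m>2n$, and for $n<m\le2n$ every surviving pair has $i,j\ge1$ (since $i\ge m-n\ge1$), so each term is $(-c_i)(-c_j)=c_ic_j\ge0$. That sign check on the partial convolution is precisely the paper's argument, which your repaired induction would reprove in a roundabout way. Your first attempt via $-r_n\,(t_n+s)$ was rightly abandoned, and the $\Z\bigl[\frac12\bigr]$-membership is, as you say, immediate from $\binom{1/2}{k}\in\Z\bigl[\frac12\bigr]$.
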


\begin{proof}
Fix $n$, and write $p_n(x):=t_n(x)^2-(1-x)=:\sum_{k\ge0}c_kx^k$. Then
$c_k=0$ for $k\le n$ or $k>2n$, while
$$c_k=(-1)^k\sum_{i=k-n}^n\choose{1/2}i\,\choose{1/2}{k-i}$$
for $n<k\le2n$. The term with index~$i$ in the sum has
sign $(-1)^{i-1}\cdot(-1)^{k-i-1}=(-1)^k$. This implies the lemma.
\end{proof}

\begin{lem}\label{burglem}
Keep the assumptions of \ref{reinzustarchmod}, assume moreover $\frac
12\in A$, and let $a\in A$ satisfy $aM\subset M$ and $(1-2a)u\in M$.
Then every state $\varphi$ of $(I,M)$ satisfies $\varphi((1-a)M)
\ge0$.
\end{lem}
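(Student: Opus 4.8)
The goal is to show $\varphi((1-a)x) \ge 0$ for every $x \in M$, using only that $\varphi$ is a state (not necessarily pure) of $(I,M)$, that $aM \subset M$, and that $(1-2a)u \in M$. The natural idea is to write $1-a$ as a "square times something in $M$" plus error terms that $\varphi$ kills or that are manifestly nonnegative. Since $1-2a = 2((1-a) - \tfrac12) $ isn't directly the right shape, I would instead aim to realize $(1-a)$ — after multiplying by $u$ — as a limit (or finite approximation) of expressions of the form $t_n(2a)^2 u + (\text{nonnegative combination of elements of }M)$, exploiting that $t_n(x)^2 - (1-x)$ has nonnegative $\Z[\tfrac12]$-coefficients by Lemma \ref{taylcoeff}. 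Setting $x \mapsto 2a$, we get $t_n(2a)^2 - (1 - 2a) = \sum_{k} c_k (2a)^k$ with all $c_k \ge 0$.

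The key algebraic step: for $x \in M$, consider $t_n(2a)^2 x$. Since $\Sigma A^2 \cdot M \subset M$ (as $M$ is a quadratic pseudomodule) we have $t_n(2a)^2 x \in M$, so $\varphi(t_n(2a)^2 x) \ge 0$. On the other hand $t_n(2a)^2 x = (1-2a)x + \sum_k c_k 2^k a^k x$. Now $(1-2a)x$ is not obviously in $M$, but I want to relate $\varphi((1-2a)x)$ to $\varphi((1-a)x)$: note $(1-2a)x = 2(1-a)x - x$, hence $\varphi((1-2a)x) = 2\varphi((1-a)x) - \varphi(x)$. Also each $a^k x \in M$ for $k \ge 1$ because $aM \subset M$, so $\varphi(a^k x) \ge 0$; and for these higher terms I need an upper bound, which is where the order unit enters: since $aM \subset M$ and $(1-2a)u \in M$, one shows inductively that $a^k x$ is bounded above by a multiple of $u$ in $\le_M$ — indeed $0 \le_M 2a x \le_M x + (1-2a)(\text{something})$... more carefully, from $(1-2a)u \in M$ and $aM\subset M$ we get $2a\cdot(\text{bounded}) $ stays bounded, so the tail $\sum_{k \ge K} c_k 2^k \varphi(a^k x)$ can be made small. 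The cleanest route: show $\sum_{k \ge 1} c_k 2^k a^k x$ converges in an appropriate sense after applying $\varphi$, using $\sum_k c_k 2^k = t_n$-type bound is false since $c_k$ depend on $n$ — so actually I should keep $n$ fixed, take the finite identity $\varphi(t_n(2a)^2 x) = 2\varphi((1-a)x) - \varphi(x) + \sum_{k=1}^{2n} c_k 2^k \varphi(a^k x)$, and let $n \to \infty$.

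The convergence analysis is the main obstacle. I expect the argument to run: let $\varphi$ be a state, $x \in M$. Fix $N \in \N$ with $x \le_M Nu$ and with $a \le_S$-type bounds replaced by: since $(1-2a)u \in M$, i.e. $2au \le_M u$, and since $aM \subset M$ preserves $\le_M$, multiplying $2au \le_M u$ by suitable elements yields $(2a)^k u \le_M u$ hence $\varphi((2a)^k u) \le \varphi(u) = 1$ for all $k$, and more generally $\varphi((2a)^k x) \le \varphi(x)$ by the same monotonicity applied to $x \le_M Nu$ (giving $(2a)^k x \le_M N (2a)^k u \le_M N u$, wait — need $(2a)^k x \le_M$ something summable). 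The decisive bound is $\varphi((2a)^k x) \le \varphi((2a)^{k-1} x) \cdot$ ... no: rather, from $2au \le_M u$ we can't directly divide $x$; instead use that $0 \le_M 2a x$ and $\varphi(2ax) \le$ ? Here I would invoke: $x \le_M N u \Rightarrow 2a x \le_M 2aN u \le_M N u$ (using $aM\subset M$ applied to $u - 2au \in M$, scaled), so $\varphi(2ax) \le N$; iterating, $\varphi((2a)^k x) \le N$ for all $k$. That gives each term bounded but not summable. To get summability I'd sharpen to $\varphi((2a)^{k} x) \le \varphi((2a)^{k-1} x)$ — which would follow if $(2a)^{k-1}x - (2a)^k x = (2a)^{k-1}(1-2a)x$... but $(1-2a)x \notin M$ in general, only $(1-2a)u \in M$. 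So I multiply $x$ first: actually the honest fix is to prove it for $x = u$ (i.e. $\varphi((1-a)u) \ge 0$, immediate since $(1-a)u = \tfrac12 u + \tfrac12(1-2a)u \in M$ as $\tfrac12 \in A$) and then bootstrap to general $x \in M$ via $x \le_M Nu$ together with the multiplicative-looking estimate. Given the delicacy, I would expect the actual proof to use the finite identity $p_n(2a) := t_n(2a)^2 - (1-2a) = \sum_{k>n} c_k(2a)^k$ and the fact that the coefficients $c_k$ with the substitution $2a$ and the hypothesis $2au \le_M u$ make $p_n(2a)u \to 0$ appropriately under $\varphi$ as $n \to \infty$; combined with $t_n(2a)^2 u \in M$ this yields $(1-2a)u = \lim(\dots) $ has $\varphi \ge 0$, and then the general case follows by the order-unit sandwich. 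The genuinely hard point is controlling the tail $\sum_{k>n} c_k 2^k \varphi(a^k x)$ — making the monotonicity from $(1-2a)u \in M$ and $aM \subset M$ yield a geometric-type decay rather than just boundedness.
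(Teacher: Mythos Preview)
Your proposal has the right overall architecture but contains a genuine gap, and it stems from one wrong choice early on: you substitute $x\mapsto 2a$ in the Taylor polynomial, obtaining $t_n(2a)^2\approx 1-2a$, and then try to recover $(1-a)$ via $(1-2a)x=2(1-a)x-x$. This detour forces you to control the tail $\sum_k c_k\,\varphi\bigl((2a)^kx\bigr)$, and you correctly diagnose that you only get boundedness, not summability. You never resolve this.

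The paper avoids the problem by substituting $x\mapsto a$ (not $2a$), so that $t_n(a)^2-(1-a)=p_n(a)=\sum_k c_k a^k$ approximates $1-a$ directly. The hypothesis $(1-2a)u\in M$ reads $au\le_M\tfrac12 u$; since $aM\subset M$, multiplying by $a$ iterates this to $a^ku\le_M 2^{-k}u$ for all $k\ge0$. \emph{This is exactly the geometric decay you were looking for}: it gives $\varphi(p_n(a)u)\le\sum_k c_k 2^{-k}=p_n(\tfrac12)$, and $p_n(\tfrac12)\to0$ as $n\to\infty$ by convergence of the binomial series at $\tfrac12$. To pass from $u$ to an arbitrary $b\in M$, first rescale so that $b\le_M u$ (possible since $u$ is an order unit and $\tfrac12\in A$); then $p_n(a)M\subset M$ yields $p_n(a)b\le_M p_n(a)u$, hence $\varphi(p_n(a)b)\le\varphi(p_n(a)u)<\epsilon$. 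Since $t_n(a)^2b\in M$, one concludes $\varphi((1-a)b)>\varphi(t_n(a)^2b)-\epsilon\ge-\epsilon$.

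In short: substitute $a$, not $2a$; the condition $(1-2a)u\in M$ is there precisely to make $a^ku\le_M 2^{-k}u$, and the normalization $b\le_M u$ transfers this bound from $u$ to $b$.
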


\begin{proof}
Normalizing $\varphi$ we can assume that $\varphi$ is monic, i.\,e.,
$\varphi(u)=1$. By hypothesis we have $au\le_M\frac u2$, and
inductively we get $a^ku\le_M2^{-k}u$ for all $k\ge0$. Let $b\in M$.
There is $r\ge0$ with $2^ru-b\in M$. In order to show $\varphi
((1-a)b)\ge0$ we may replace $b$ by $2^{-r}b$, and may therefore
assume $u-b\in M$. We will show $\varphi((1-a)b)>-\epsilon$ for every
real number $\epsilon>0$.

Let $t_n(x)$ be the Taylor polynomial from Lemma \ref{taylcoeff}, and
write $p_n(x)=t_n(x)^2-(1-x)$. Due to the convergence of the binomial
series, there is $n\in\N$ with $p_n(\frac12)<\epsilon$.
Fix $n$ and write $p:=p_n$. According to \ref{taylcoeff} we have
$$p(x)=\sum_kc_kx^k$$
with nonnegative numbers $c_k\in\Z\bigl[\frac12\bigr]$. So $aM\subset
M$ implies $p(a)M\subset M$, and from $b\le_Mu$ we conclude $p(a)b\le
_Mp(a)u$. In particular, $\varphi(p(a)b)\le\varphi(p(a)u)$. On the
other hand,
$$\varphi\bigl(p(a)u\bigr)=\sum_kc_k\,\varphi(a^ku)\>\le\>\sum_kc_k
2^{-k}=p\Bigl(\frac12\Bigr)\><\>\epsilon.$$
We conclude
$$\varphi\bigl(t_n(a)^2b\bigr)-\varphi\bigl((1-a)b\bigr)=\varphi(p(a)b)
\>\le\>\varphi(p(a)u)\><\>\epsilon,$$
and so
$$\varphi\bigl((1-a)b\bigr)\>>\>\varphi(t_n(a)^2b)-\epsilon\>\ge\>
-\epsilon$$
since $M$ is a quadratic pseudomodule.
\end{proof}

\begin{lab}
\emph{Proof of Theorem \ref{reinzustarchmod}:}
We may pass from $A$, $I$ and $M$ to $A\otimes\Q$, $I\otimes\Q$ and
$M_\Q=\{x\otimes\frac1n\colon x\in M$, $n\in\N\}$, respectively (see
the remark in \ref{remsgoodhandl}). In particular, we may assume
$\frac12\in A$, and thus have $\Sigma A^2-\Sigma A^2=A$. Therefore it
is enough to prove identity \eqref{multregel} for $a\in\Sigma A^2$
and $b\in I$.

If $\varphi(au)=0$, one shows $\varphi(aI)=0$ as in
\ref{pfeigreinzust}. If $\varphi(au)>0$, choose $k\in\N$ with $au
\le_M2^ku$. For the proof of \eqref{multregel} we may replace $a$
with $2^{-(k+1)}a$, and can thus assume $(1-2a)u\in M$. Lemma
\ref{burglem} now shows $\varphi((1-a)M)\ge0$. As in the proof of
\ref{eigreinzust}, this makes $\varphi$ a proper convex combination of
the monic states $\varphi_a$ and $\varphi_{1-a}$. Since $\varphi$ is a
pure state we conclude $\varphi=\varphi_a$, which is the assertion of
\ref{reinzustarchmod}.
\qed
\end{lab}

The algebraic meaning of identity \ref{eigreinzust} \eqref{multregel}
is explained in the following easy lemma:

\begin{lem}\label{algbedeut}
Let $A$ be a ring, $I\subset A$ an ideal and $u\in I$. Let $k$ be a
field and $\varphi\colon I\to k$ an additive map satisfying $\varphi
(u)=1$. Equivalent conditions:
\begin{itemize}
\item[(i)]
$\all a\in A$ $\all b\in I$ \ $\varphi(ab)=\varphi(au)\cdot\varphi
(b)$;
\item[(ii)]
there is a ring homomorphism $\phi\colon A\to k$ such that $\varphi
(ab)=\phi(a)\cdot\varphi(b)$ for $a\in A$, $b\in I$.
\end{itemize}
Moreover, the homomorphism $\phi$ in (ii) is uniquely determined and
satisfies $\phi(a)=\varphi(au)$ for $a\in A$. Exactly one of the
following two alternatives holds:
\begin{itemize}
\item[(1)]
$\phi(u)\ne0$ and $\varphi(b)=\frac{\phi(b)}{\phi(u)}$ for every
$b\in I$;
\item[(2)]
$\phi(I)=0$.
\end{itemize}
\end{lem}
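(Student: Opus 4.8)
The plan is to prove Lemma \ref{algbedeut} by first establishing the equivalence of (i) and (ii), then extracting the formula $\phi(a)=\varphi(au)$, and finally splitting into the two alternatives. The implication (ii)$\To$(i) is immediate: taking $b=u$ in the defining relation of (ii) gives $\varphi(au)=\phi(a)\varphi(u)=\phi(a)$, so substituting back yields exactly (i). This also shows that any $\phi$ as in (ii) must satisfy $\phi(a)=\varphi(au)$, which gives uniqueness.

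For the converse (i)$\To$(ii), I would \emph{define} $\phi\colon A\to k$ by $\phi(a):=\varphi(au)$; this is additive because $\varphi$ is, and $\phi(1)=\varphi(u)=1$. The relation $\varphi(ab)=\phi(a)\varphi(b)$ for $a\in A$, $b\in I$ is then just a restatement of (i). The one thing to check is that $\phi$ is multiplicative. For $a,a'\in A$, note that $a'u\in I$, so applying (i) with $b=a'u$ gives $\varphi(aa'u)=\varphi(au)\varphi(a'u)$, that is, $\phi(aa')=\phi(a)\phi(a')$. Thus $\phi$ is a ring homomorphism, completing (i)$\To$(ii).

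It remains to establish the dichotomy. Apply the relation in (ii) (equivalently (i)) with $b=u$, getting $\varphi(au)=\phi(a)$ once more, so for any $b\in I$ we have $\phi(b)=\varphi(bu)=\varphi(ub)$, and by (i) with $a=u$ this equals $\varphi(uu)\cdot\varphi(b)$... more simply, apply (ii) with the element $u\in A$ in the role of $a$ and arbitrary $b\in I$: $\varphi(ub)=\phi(u)\varphi(b)$. Since $\phi(b)=\varphi(bu)=\varphi(ub)$, this reads $\phi(b)=\phi(u)\varphi(b)$ for all $b\in I$. If $\phi(u)\ne0$ we may solve $\varphi(b)=\phi(b)/\phi(u)$, which is alternative (1). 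If $\phi(u)=0$, then $\phi(b)=0$ for all $b\in I$, i.e.\ $\phi(I)=0$, which is alternative (2); and these two cases are mutually exclusive since $u\in I$ forces $\phi(u)=0$ in case (2), contradicting $\phi(u)\ne0$.

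This lemma is entirely formal, so there is no real obstacle; the only point requiring a moment's care is remembering that $I$ is an ideal, so that $a'u$ (and $ab$) lies in $I$ whenever $a'\in A$, which is what licenses feeding such products into $\varphi$ and into the hypotheses of (i). Everything else is direct substitution.
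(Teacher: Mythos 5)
Your proof is correct and follows essentially the same route as the paper's: define $\phi(a):=\varphi(au)$, verify it is a ring homomorphism via (i) applied to $b=a'u\in I$, and derive the dichotomy from the identity $\phi(b)=\phi(u)\varphi(b)$ for $b\in I$. You merely spell out the verifications the paper dismisses as ``readily checked,'' so there is nothing to correct.
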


Note that the alternatives (1), resp.\ (2), are equivalent to
$\varphi(u^2)\ne0$, resp.\ $\varphi(u^2)=0$.

\begin{proof}
(i) $\To$ (ii)
One sees immediately that $\phi$ must satisfy $\phi(a)=\varphi(au)$
($a\in A$). It is readily checked that the so-defined $\phi$
satisfies (ii). The converse is clear as well.
Assuming that $\phi$ satisfies (ii), we have $\phi(b)=\phi(u)\cdot
\varphi(b)$ for every $b\in I$. If $\phi(u)\ne0$ then (1) holds.
Otherwise $\phi(u)=0$, and so $\phi(I)=0$.
\end{proof}

\begin{dfn}\label{dfnassringhom}
In the situation of \ref{algbedeut} we call $\phi$ the ring
homomorphism \emph{associated with} $\varphi$. We refer to the
identity $\varphi(ab)=\phi(a)\varphi(b)$ (for $a\in A$, $b\in I$) by
saying that $\varphi$ is \emph{$\phi$-linear}.
\end{dfn}

The setting described in \ref{algbedeut} is relevant to us since it
arises from pure states in ideals, see \ref{eigreinzust} and
\ref{reinzustarchmod}. In this situation the following additional
observation is important:

\begin{lem}\label{tangvectors}
Let $A$ be a ring, $I\subset A$ an ideal and $M\subset I$ an additive
semigroup. Let $u\in M$, and let $\varphi\colon I\to\R$ be a state of
$(I,M,u)$ fulfilling \eqref{multregel}. Then the associated ring
homomorphism $\phi\colon A\to\R$ satisfies $\phi\in X(T)$ where
$$T:=\{t\in A\colon tu\in M\}.$$
In particular, if $uM\subset M$ then $\phi\in X(M)$.
\end{lem}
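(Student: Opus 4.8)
The plan is to show that the associated ring homomorphism $\phi$ is nonnegative on every element of $T=\{t\in A\colon tu\in M\}$, using the defining $\phi$-linearity of $\varphi$ together with the fact that $\varphi$ is a state, i.e.\ $\varphi|_M\ge0$. First I would recall from Lemma \ref{algbedeut} that the associated homomorphism satisfies $\phi(a)=\varphi(au)$ for all $a\in A$. Hence, given $t\in T$, we have $tu\in M$ by definition of $T$, and therefore $\phi(t)=\varphi(tu)\ge0$ because $\varphi$ is a state of $(I,M,u)$ and $tu\in M$. This immediately gives $\phi\in X(T)$.

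For the final assertion, suppose $uM\subset M$. Then every $b\in M$ satisfies $ub\in M$, so $b\in T$; that is, $M\subset T$. Since $X$ is monotone with respect to reverse inclusion (more elements to be nonnegative on gives a smaller set of homomorphisms), $X(T)\subset X(M)$, and combining with $\phi\in X(T)$ yields $\phi\in X(M)$. One could equally argue directly: for $b\in M$ we have $ub\in M$, so $\phi(b)=\varphi(bu)\ge0$.

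There is essentially no obstacle here; the only point requiring a moment's care is the bookkeeping between the additive map $\varphi$ on $I$ and the ring homomorphism $\phi$ on $A$, and in particular remembering that $\phi$ is defined on all of $A$ (not just $I$) while the nonnegativity information comes from $\varphi$ on the subsemigroup $M$ of $I$ via the bridge $\phi(t)=\varphi(tu)$. Since $u\in M\subset I$, the product $tu$ lies in $I$ for every $t\in A$ (as $I$ is an ideal), so $\varphi(tu)$ makes sense, and the argument goes through. I would write this up in two or three lines.

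\begin{proof}
By Lemma \ref{algbedeut}, the associated homomorphism $\phi$ satisfies $\phi(a)=\varphi(au)$ for all $a\in A$; note that $au\in I$ since $u\in I$ and $I$ is an ideal, so the right-hand side is defined. If $t\in T$, then $tu\in M$ by definition of $T$, hence $\phi(t)=\varphi(tu)\ge0$ because $\varphi$ is a state of $(I,M,u)$. Thus $\phi\in X(T)$. Finally, if $uM\subset M$, then $ub\in M$ for every $b\in M$, i.\,e.\ $M\subset T$, and therefore $\phi(b)=\varphi(bu)\ge0$ for all $b\in M$; that is, $\phi\in X(M)$.
\end{proof}
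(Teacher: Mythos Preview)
Your proof is correct and follows exactly the same approach as the paper: use $\phi(t)=\varphi(tu)$ and observe that $tu\in M$ implies $\varphi(tu)\ge0$. The paper's own proof is literally the one line ``If $t\in A$ is such that $tu\in M$, then $\phi(t)=\varphi(tu)\ge0$''; you have simply added the (obvious) verification of the ``in particular'' clause.
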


\begin{proof}
If $t\in A$ is such that $tu\in M$, then $\phi(t)=\varphi(tu)\ge 0$.
\end{proof}

\begin{cor}[Dichotomy]\label{dicho}
Let $S$ be a semiring and $I$ an ideal in $A$, and let $M\subset I$
be an $S$-pseudomodule such that $(I,M)$ has an order unit $u$.
Assume that $S$ is either archimedean or a preordering. Given any
pure state $\varphi\colon I\to\R$ of $(I,M,u)$, precisely one of the
following two statements is true:
\begin{itemize}
\item[(I)]
$\varphi$ is a scaled ring homomorphism: There exists $\phi\in X(S)$
with $\phi(u)\ne0$ such that $\varphi=\frac1{\phi(u)}\cdot\phi|_I$.
\item[(II)]
There exists $\phi\in X(S+I)$ such that $\varphi$ is $\phi$-linear.
\end{itemize}
More precisely, ${\rm(I)}\iff\varphi(u^2)\ne0$, and ${\rm(II)}\iff
\varphi(u^2)=0$. In both cases, $\phi$ is uniquely determined. In
{\rm(I)} (resp.\ {\rm(II)}), one even has $\phi\in X(T)$ (resp.\
$\phi\in X(T+I)$) with $T$ defined as in Lemma \ref{tangvectors}.
Case {\rm(II)} can occur only when $I\ne A$.
\end{cor}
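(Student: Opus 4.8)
The plan is to bootstrap from Proposition \ref{eigreinzust} and Theorem \ref{reinzustarchmod}, which together tell us that a pure state $\varphi$ of $(I,M,u)$ satisfies the multiplicative law \eqref{multregel}, provided $S$ is archimedean or a preordering. (In the preordering case, note that since $M$ is a $\Sigma A^2$-pseudomodule it is in particular a quadratic pseudomodule, so Theorem \ref{reinzustarchmod} applies directly; and an archimedean semiring with $\Q_\plus\subset S$ — or after passing to $A\otimes\Q$ — satisfies the hypotheses of \ref{eigreinzust}.) Once \eqref{multregel} is in hand, Lemma \ref{algbedeut} applies verbatim with $k=\R$: it produces a uniquely determined ring homomorphism $\phi\colon A\to\R$ with $\phi(a)=\varphi(au)$, it shows $\varphi$ is $\phi$-linear, and it gives the clean dichotomy between alternative (1) ($\phi(u)\ne0$, and then $\varphi=\frac1{\phi(u)}\phi|_I$) and alternative (2) ($\phi(I)=0$), with the remark after \ref{algbedeut} identifying these as $\varphi(u^2)\ne0$ versus $\varphi(u^2)=0$.

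The remaining work is to upgrade the bare ring homomorphism $\phi$ to a point of $X(S)$ (resp.\ $X(S+I)$), and then of $X(T)$ (resp.\ $X(T+I)$). For the $X(T)$ statement in case (I): Lemma \ref{tangvectors} says exactly that $\phi\in X(T)$, where $T=\{t\in A\colon tu\in M\}$; and since $S\subset T$ whenever $u$ is such that $Su\subset M$ — which holds because $M$ is an $S$-pseudomodule and $u\in M$, so $su\in M$ for all $s\in S$ — we get $\phi\in X(S)$ as a consequence of $\phi\in X(T)$. For case (II), $\varphi(u^2)=0$ forces $\phi(u)=\varphi(u^2)=0$, hence $\phi\in X(I)$ trivially (every ring homomorphism sending $u$ into... — more carefully, $\phi(I)=0$ means $\phi|_I\ge0$, so $\phi\in X(I)$, indeed $\phi\in X(I\cup-I)$); combined with $\phi\in X(T)$ this gives $\phi\in X(T)\cap X(I)\subseteq X(T+I)$ since $X$ turns sums of sets into intersections, and similarly $\phi\in X(S+I)$. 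The mutual exclusivity and exhaustiveness of (I), (II) is immediate from the corresponding statement in \ref{algbedeut}.

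Finally, for the assertion that case (II) forces $I\ne A$: if $I=A$ then $u\in I$ is a unit... no — rather, $\varphi$ is then a state of $(A,M,1)$... the cleanest argument is that $\phi(I)=0$ with $I=A$ would force $\phi(1)=0$, contradicting that $\phi$ is a ring homomorphism (so $\phi(1)=1$). One should double-check that $\phi\ne 0$, i.e.\ that $\phi$ really is a ring homomorphism and not the zero map: this is guaranteed by Lemma \ref{algbedeut} (a ring homomorphism into a field sends $1$ to $1$), so $\phi(A)\ne 0$, hence if $\phi(I)=0$ then $I\ne A$.

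I do not anticipate a serious obstacle here: the corollary is essentially a repackaging of \ref{eigreinzust}, \ref{reinzustarchmod}, \ref{algbedeut} and \ref{tangvectors}, and the only point requiring a moment's care is checking that $Su\subset M$ (so that $S\subset T$) — which is exactly the $S$-pseudomodule axiom applied to $u\in M$ — and tracking that the functor $X(-)$ sends $S+I$ to $X(S)\cap X(I)$. If anything is delicate, it is making sure the reduction to $A\otimes\Q$ in the preordering case does not disturb the conclusions about $\phi\in X(S)$; but since $S(G,M,u)=S(G_\Q,M_\Q,u\otimes1)$ and pure states correspond, and since a ring homomorphism $A\otimes\Q\to\R$ restricts to one on $A$, this is harmless.
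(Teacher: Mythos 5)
Your proposal is correct and follows essentially the same route as the paper: combine Proposition \ref{eigreinzust} (archimedean case) and Theorem \ref{reinzustarchmod} (preordering case, noting $M$ is then a quadratic pseudomodule) with Lemma \ref{algbedeut} for the dichotomy and uniqueness of $\phi$, and with Lemma \ref{tangvectors} plus $S\subset T$ (from $Su\subset M$) and $\phi(I)=0$ in case (II) to land in $X(T)$ resp.\ $X(T+I)$. The only superfluous bits are the worries about $\Q_\plus\subset S$ and passing to $A\otimes\Q$: Proposition \ref{eigreinzust} needs no such hypothesis, and the $\Q$-reduction is internal to the proof of \ref{reinzustarchmod}, so neither affects the argument.
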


\begin{proof}
This is Prop.\ \ref{eigreinzust} (for $S$ archimedean) resp.\ Thm.\
\ref{reinzustarchmod} (for $\Sigma A^2\subset S$), combined with
\ref{algbedeut}. In both cases (I) and (II), note that $\phi$ is
necessarily the ring homomorphism associated with $\varphi$ (Def.\
\ref{dfnassringhom}), and hence is uniquely determined by $\varphi$.
So the additional information $\phi\in X(T)$ follows from Lemma
\ref{tangvectors}.
\end{proof}

Depending on $u$, the semiring $T$ can be larger than $S$. This is
sometimes useful, for example, in the proof of Thm.\ \ref{sumbiti}
below.

\begin{rem}\label{phiuposneg}
In general, both $\phi(u)>0$ and $\phi(u)<0$ are possible in case
(I), and accordingly, both $\phi\in X(M)$ and $\phi\in X(-M)$. In
many standard situations, however, the second cannot occur. For
example, when $M=N\cap I$ for some quadratic module $N$ of $A$, then
necessarily $\phi\in X(M)$ since $u^2\in M$. The same reasoning
applies when $M$ is a semiring.
\end{rem}

\begin{cor}\label{ouvsarch}
Assume $\Q\subset A$, and let $M$ be a quadratic module in $A$. If
$(A,M)$ has an order unit then $M$ is archimedean.
\end{cor}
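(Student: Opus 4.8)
The plan is to deduce Corollary \ref{ouvsarch} from the dichotomy in Corollary \ref{dicho} applied to the ideal $I=A$ with order unit $u=1$. Since $\frac12\in A$, the quadratic module $M$ has $\supp(M)$ an ideal and $\Sigma A^2\subset M$, so $M$ is (in particular) a pseudomodule over the preordering $S=\Sigma A^2$, and $(A,M)$ has an order unit by hypothesis; call it $u$. Note $1$ need not be the given order unit, so first I would reduce to the case $u=1$: it suffices to show $1\in O(M,1)=A$, i.e.\ that $M$ is archimedean, and by Remark \ref{remsomu}(1) this is exactly the claim. Actually the cleanest route is to apply Corollary \ref{cor2gh}: by Proposition \ref{oprop}(d), $O(M,1)$ is a subring of $A$, and I want to show it is all of $A$.

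The key step is to analyze the pure states of $(A,M,u)$ for the \emph{given} order unit $u$. By Corollary \ref{dicho} (case $S=\Sigma A^2$, a preordering, $I=A$), case (II) cannot occur since $I=A$; hence every pure state $\varphi$ of $(A,M,u)$ is of the form $\varphi=\frac1{\phi(u)}\phi$ for a ring homomorphism $\phi\colon A\to\R$ with $\phi(u)\ne0$, and moreover $\phi\in X(M)$ by Remark \ref{phiuposneg} (since $M$ is a quadratic module, $u^2\in M$ forces $\phi(u^2)\ge0$, so $\phi(u)>0$). Now take any $a\in A$; I claim $\varphi(1+a^2)>0$ for every pure state $\varphi$. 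Indeed $\varphi(1+a^2)=\frac{\phi(1)+\phi(a)^2}{\phi(u)}=\frac{1+\phi(a)^2}{\phi(u)}>0$ because $\phi$ is a ring homomorphism with $\phi(1)=1$ and $\phi(u)>0$. Since $M$ satisfies the cancellation hypothesis of Corollary \ref{cor2gh} — if $na^2\in M$... — wait, I should instead argue directly. Better: apply Theorem \ref{goodhandl} to get $n(1+a^2)\in M$ for some $n\ge1$, hence $1+a^2\in O(M,1)$ (as $n(1+a^2)\pm 0$, no). Let me restructure.

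The cleanest argument: fix $a\in A$. By the dichotomy every pure state $\varphi$ of $(A,M,u)$ has $\varphi=\frac1{\phi(u)}\phi$ with $\phi\in X(M)$ a ring homomorphism. Then $\varphi(N-a)=\frac{N-\phi(a)}{\phi(u)}$ and $\varphi(N+a)=\frac{N+\phi(a)}{\phi(u)}$; I want these positive for suitable $N$. Since $S(A,M,u)$ is compact and $\varphi\mapsto\varphi(a)$ is continuous, and likewise $\varphi\mapsto\varphi(1)=\frac1{\phi(u)}$ is continuous and positive, the ratio $\varphi(a)/\varphi(1)=\phi(a)$ is bounded on the set of pure states; pick an integer $N$ exceeding $|\phi(a)|$ for all pure states (using compactness of $S(A,M,u)$ and the fact that the function $\varphi\mapsto\phi(a)=\varphi(a)/\varphi(1)$ extends continuously — actually $\phi(a)=\varphi(a)/\varphi(u^2)$ hmm, but $\varphi(u)=1$ so $\phi(u)=1/\varphi(1)$, anyway $\phi(a)=\varphi(a)\phi(u)=\varphi(a)/\varphi(1)$ and $\varphi(1)>0$ on all of $S(A,M,u)$ by the remark, continuous, compact domain, so bounded below by a positive constant). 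Then $\varphi(N\pm a)>0$ for all pure states, so by Theorem \ref{goodhandl} there is $m\in\N$ with $m(N\pm a)\in M$, i.e.\ $mN\pm ma\in M$, so $ma\in O(M,1)$, hence $a\in O(M,1)$ by Proposition \ref{oprop}(d) applied to... no: $O(M,1)$ is a subgroup containing $ma$; does $ma\in O(M,1)$ imply $a\in O(M,1)$? Yes directly from the definition: $mN\pm ma\in M$ gives $a\le_M \frac{mN}{m}\cdot$, hmm need divisibility. Since $\Q\subset A$ we can write $a = \frac1m(ma)$ and $O(M,1)$ is an $O(M,1)$-module containing $\frac1m\in\Q_+\subset\Sigma A^2\subset O(S,1)\subset O(M,1)$; so $a=\frac1m\cdot ma\in O(M,1)$. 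Therefore $O(M,1)=A$, i.e.\ $1$ is an order unit of $(A,M)$, i.e.\ $M$ is archimedean by Remark \ref{remsomu}(1).

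The main obstacle is the bookkeeping around the \emph{given} order unit $u$ versus the order unit $1$ we ultimately want: one must not assume $1$ is an order unit at the outset (that is the conclusion), so all applications of Theorem \ref{goodhandl} and of the dichotomy must be with respect to $u$, and the hypothesis $\Q\subset A$ is what lets us divide by the integer $m$ at the end to pass from $ma\in O(M,1)$ to $a\in O(M,1)$. A secondary point to get right is the exclusion of case (II) of the dichotomy (immediate since $I=A$) and the sign $\phi(u)>0$ (from $u^2\in M$, as in Remark \ref{phiuposneg}); both are needed so that $\varphi(1)=1/\phi(u)>0$ uniformly, which is what makes $\phi(a)=\varphi(a)/\varphi(1)$ a continuous function on the compact set $S(A,M,u)$ and hence bounded.
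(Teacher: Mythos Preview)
Your argument is correct and follows essentially the same route as the paper: use the dichotomy (Corollary~\ref{dicho} with $I=A$, so case~(II) is excluded) to see that every pure state of $(A,M,u)$ has $\varphi(1)>0$, and deduce that $1$ is an order unit. The paper's execution is considerably shorter, however: it simply invokes Corollary~\ref{cor2gh} with $x=1$ (the cancellation hypothesis holds since $\Q\subset A$ gives $\frac1n\in\Sigma A^2$), and for $\varphi(1)>0$ it just notes $\varphi(1)=\frac1{\phi(u)}\ne0$ together with $1\in M\Rightarrow\varphi(1)\ge0$ --- so there is no need to reprove the compactness argument inside Corollary~\ref{cor2gh}, nor to worry about extending $\varphi(1)>0$ from pure states to all of $S(A,M,u)$.
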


In other words, if $(A,M)$ has an order unit, then $1$ is such an
order unit as well.

\begin{proof}
Let $u$ be an order unit of $(A,M)$. By \ref{cor2gh} it suffices to
show $\varphi(1)>0$ for every pure state $\varphi$ of $(A,M,u)$.
By \ref{dicho}, such $\varphi$ satisfies $\varphi(b)=\frac{\phi(b)}
{\phi(u)}$ ($b\in A$) for some ring homomorphism $\phi\colon A\to\R$
with $\phi(u)\ne0$. So $\varphi(1)=\frac1{\phi(u)}\ne0$, and $1\in M$
implies $\varphi(1)>0$.
\end{proof}

\begin{rem}
It is natural to wonder where there is a converse to Corollary
\ref{dicho}, in the following sense. In the situation given there,
assume that $\varphi$ is a state of $(I,M,u)$ that satisfies
the multiplicativity law \eqref{multregel} (and hence satisfies
(I) or (II) of \ref{dicho}, by Lemma \ref{algbedeut}). Does it follow
that $\varphi$ is a pure state, i.\,e.\ is extremal in $S(I,M,u)$?

It is easy to see that the answer must be no in general, at least
when $\varphi$ is of type (II): Fixing $\phi$, the $\phi$-linear
states of $(I,M,u)$ usually form a convex (compact) set of
positive dimension, so most of its elements are not extremal. For
example, when $M=\PO(x,\,y,\,1-x-y)$ in $A=\R[x,y]$ and $I=(x,y)$ is
the maximal ideal of the origin in $A$, then $u=x+y$ is an order unit
of $(I,M\cap I)$ (this is shown in \ref{oekrit} below). The
states of type (II) are the partial derivatives whose direction lies
in the closed first quadrant (up to normalization). Hence only two of
them are pure states.

However, when $\varphi$ is of type (I), then under suitable
additional side conditions on $M$ it is indeed true that $\varphi$
is necessarily pure. For example, this is so when $M=N\cap I$ for
some quadratic module $N$ in $A$:
\end{rem}

\begin{prop}\label{multstatisxtr}
Suppose $\R\subset A$.
Let $I$ be an ideal of $A$ and $M\subset I$ a quadratic pseudomodule
with $I=M-M$. We assume $a^2\in M$ for every $a\in I$. Then every
multiplicative state $\varphi\in S(I,M)$ is extremal in the cone
$S(I,M)$, i.\,e., $\varphi=\varphi_1+\varphi_2$ with $\varphi_i\in
S(I,M)$ implies $\varphi_i=c_i\varphi$ with $c_i\ge0$.
\end{prop}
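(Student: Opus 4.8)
The plan is to show that if $\varphi = \varphi_1 + \varphi_2$ with $\varphi_i \in S(I,M)$, then each $\varphi_i$ is a nonnegative multiple of $\varphi$. Since $\varphi$ is multiplicative, let $\phi \colon A \to \R$ be the associated ring homomorphism (Def.\ \ref{dfnassringhom}), so $\varphi(ab) = \phi(a)\varphi(b)$ for $a \in A$, $b \in I$. The first step is to locate the support of $\varphi$: set $\p := \{a \in A : \phi(a) = 0\}$, a prime ideal of $A$ (the kernel of $\phi$). I claim $\varphi$ vanishes on $\p \cap I$, and conversely is "supported at $\phi$" in a strong sense. Indeed, pick $e \in I$ with $\varphi(e) = 1$ (possible since $I = M-M$ and $\varphi \ne 0$; actually one can first arrange $\varphi(u) = 1$ for a suitable order-unit-like element, but $e$ with $\varphi(e)=1$ suffices). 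For any $b \in I$ we then have $\varphi(b) = \varphi(b)\varphi(e)$, and more usefully $\phi(b)\varphi(e) = \varphi(be) = \phi(e)\varphi(b)$; if $\phi(e) \ne 0$ this already forces $\varphi(b) = \phi(b)/\phi(e)$ for all $b$, i.e.\ $\varphi$ is (a scalar multiple of) the restriction of the \emph{ring homomorphism} $\phi$ — case (I) in the dichotomy. If $\phi(e) = 0$ for every $e \in I$, i.e.\ $\phi(I) = 0$, we are in case (II). I would treat these two cases and, as the remark preceding the proposition suggests, the substance is case (I).

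In case (I), normalize so that $\varphi = \phi|_I$ with $\phi(e) = 1$ for some idempotent-like $e$; more precisely, there is $e \in I$ with $\phi(e) = 1$, hence $e^2 - e \in \p$. The key step is: for $b \in I$ with $\phi(b) = 0$, I want $\varphi_i(b) = 0$ for $i = 1,2$. The trick is to exploit the hypothesis $a^2 \in M$ for all $a \in I$ together with the fact that $\varphi$ \emph{annihilates} such $b$. Given $b \in I$ with $\phi(b) = 0$ and any $t \in \R_{>0}$, consider $c := tb + \phi(\cdot)$-adjustments to land inside $I$... cleaner: for $n \in \N$ and the element $ne - b \in I$ (resp.\ $ne + b$), we have $\phi(ne \pm b) = n \ne 0$; then $(ne \pm b)$ is, modulo $\supp(M)$, comparable to squares. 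The genuinely useful identity is the one used repeatedly in Section 3: for $a \in I$ write $a^2 \in M$, and for $r$ large the elements $(r \pm a)$ and hence $(r\pm a)^2 \pm$ something lie appropriately; but here we should work with $\varphi_i$, not $\varphi$. So: since $\varphi_i \le \varphi$ coordinatewise on $M$ is false in general, I instead use $0 \le \varphi_i \le \varphi$ on $M$ only in the sense $\varphi_i(m) \le \varphi(m)$ for $m \in M$ (because $\varphi = \varphi_1 + \varphi_2$ and both are $\ge 0$ on $M$). Now for $b \in I$ with $\phi(b) = 0$: for every $\epsilon > 0$, I want to sandwich $b$ between two elements of $M$ that $\varphi$ maps close to $0$. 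Take $a \in I$; then $\epsilon^{-1} b^2 \pm b + \epsilon e^2 = \epsilon(\epsilon^{-1}b \pm \tfrac{\epsilon}{2}e/\cdots)$ — i.e.\ complete the square: $\tfrac1\epsilon\, b^2 \pm b + \tfrac\epsilon4\, e^2$... but $be$ need not equal a square. Since $b, e \in I$, $\tfrac1\epsilon b^2 + \tfrac\epsilon4 e^2 \pm b \cdot(\text{?})$. The honest completion is $\big(\tfrac{1}{\sqrt{\epsilon'}}b \pm \tfrac{\sqrt{\epsilon'}}{2} e\big)^2 \in M$, giving $\tfrac{1}{\epsilon'} b^2 + \tfrac{\epsilon'}{4} e^2 \pm be \in M$ for every $\epsilon' > 0$; apply $\varphi$: since $\phi(b) = 0$, $\varphi(b^2) = \phi(b)\varphi(b) = 0$ and $\varphi(be) = \phi(b)\varphi(e) = 0$, so $\varphi$ of these elements is $\tfrac{\epsilon'}{4}\varphi(e^2)$, which $\to 0$. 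Hence $\varphi_i$ of these elements $\to 0$ too (being nonnegative and dominated by $\varphi$ on $M$), so $|\varphi_i(be)| \le \tfrac{\epsilon'}{4}\varphi_i(e^2) \to 0$, i.e.\ $\varphi_i(be) = 0$ whenever $\phi(b) = 0$.

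To finish: for arbitrary $b \in I$, write $b = \phi(b)\, e + (b - \phi(b)e)$; the second summand is in $I$ with $\phi$-value $0$, so $\varphi_i(b e) = \phi(b)\varphi_i(e^2)$ — wait, I need $\varphi_i(b)$, not $\varphi_i(be)$. Replace $e$ above by an element $u$ acting like a unit on $I$: the proposition's setup via $I = M - M$ and $a^2 \in M$ should let me run the argument with $b$ in place of $be$ directly once I multiply by a suitable square. Concretely, redo the sandwich with the pair $b, u$ where $u \in M$, $\varphi(u) = 1$: from $\big(\tfrac{1}{\sqrt{\epsilon'}} b \pm \tfrac{\sqrt{\epsilon'}}{2} v\big)^2 \in M$ for any $v \in I$, and choosing $v$ so that $bv$ is controlled. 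I expect the clean statement to be: set $c_i := \varphi_i(u^2) \ge 0$ (well-defined since $u^2 \in M$), note $c_1 + c_2 = \varphi(u^2)$, and prove $\varphi_i(b) = c_i \cdot \varphi(b)$ for all $b \in I$ by the squares-sandwich argument, handling $b$ with $\phi(b) = 0$ first (getting $\varphi_i(b) = 0$) and then general $b$ via $b - \phi(b)u \in \ker\phi$. The scalars then match because $\varphi(u^2)^{-1}$ normalizes correctly when $\phi(u) \ne 0$. \textbf{The main obstacle} I anticipate is bookkeeping the case $\phi(u^2) = 0$ versus $\phi(u^2) \ne 0$ — i.e.\ the dichotomy reappears inside the proof — and making sure "$\varphi_i$ dominated by $\varphi$ on $M$" is legitimately used (it is: $\varphi_i(m) \ge 0$ and $\varphi_1(m) + \varphi_2(m) = \varphi(m)$ force $0 \le \varphi_i(m) \le \varphi(m)$ for $m \in M$), and that all the elements I apply this to genuinely lie in $M$, which is exactly where the hypothesis $a^2 \in M$ for $a \in I$ together with $M$ being a quadratic pseudomodule is consumed.
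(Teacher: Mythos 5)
The paper itself gives no written proof here: immediately after the statement it defines ``multiplicative'' as $\varphi(xy)=\varphi(x)\varphi(y)$ for all $x,y\in M$, and then simply invokes Bonsall--Lindenstrauss--Phelps \cite{BLP}, Thm.~13, asserting that their argument carries over. Your proposal starts from a different hypothesis: you read ``multiplicative'' as the law \eqref{multregel}, i.e.\ $\phi$-linearity for an associated ring homomorphism $\phi$ of $A$. Under that reading the proposition is false, as the paper's own remark just before it shows: take $A=\R[x,y]$, $I=(x,y)$ and $M=\PO(x,\,y,\,1-x-y)\cap I$ (this satisfies $M\subset I$, $I=M-M$ and $a^2\in M$ for all $a\in I$); any directional derivative at the origin in a direction interior to the first quadrant is a state of $(I,M)$ which is $\phi$-linear for $\phi=$ evaluation at $0$, yet it is a nontrivial sum of the states $\partial_x$ and $\partial_y$ and proportional to neither. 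This is exactly your ``case (II)'', which you set aside with ``the substance is case (I)''; under the intended definition that case is vacuous for $\varphi\ne0$ (a multiplicative $\varphi$ vanishing on $I\cdot I$ has $\varphi(m)^2=\varphi(m^2)=0$ for $m\in M$, hence $\varphi=0$ on $M-M=I$), but under your definition it cannot be repaired.

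The more serious gap is inside your case (I). The completing-the-square computation is correct but only ever yields statements about products: you prove $\varphi_i(bv)=0$ for $b\in\ker\phi\cap I$, $v\in I$, i.e.\ that $\varphi_i$ is proportional to $\varphi$ on $I^2$ --- you notice this yourself (``I need $\varphi_i(b)$, not $\varphi_i(be)$''), and the final paragraph is a plan rather than an argument. Since functionals vanishing on $I^2$ but not on $I$ genuinely exist (the directional derivatives above), no manipulation using only squares of elements of $I$ can descend from $I^2$ to $I$; an extra idea is required. The missing ingredient is the pseudomodule property $\Sigma A^2\cdot M\subset M$ applied with squares of elements \emph{outside} $I$: normalize $u\in M$ with $\varphi(u)=1$ (possible for $\varphi\ne0$ since $I=M-M$ and $\R_\plus M\subset M$); then $(1-u)^2y\in M$ for $y\in M$, and multiplicativity (extended bilinearly to $I\times I$ via $I=M-M$) gives $\varphi((1-u)^2y)=(1-\varphi(u))^2\varphi(y)=0$, so $0\le\varphi_i((1-u)^2y)\le\varphi((1-u)^2y)=0$ yields $\varphi_i(y)=2\varphi_i(uy)-\varphi_i(u^2y)$. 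Your sandwich argument (or a short Cauchy--Schwarz: $\varphi_i\bigl((y-\varphi(y)u)^2\bigr)\le\varphi\bigl((y-\varphi(y)u)^2\bigr)=0$, hence $\varphi_i\bigl(x(y-\varphi(y)u)\bigr)=0$) gives $\varphi_i(xy)=\varphi_i(u^2)\varphi(x)\varphi(y)$ for $x,y\in I$, and feeding this into the identity above gives $\varphi_i(y)=\varphi_i(u^2)\varphi(y)$ on $M$, hence on $I$, i.e.\ $c_i=\varphi_i(u^2)$. That is essentially the \cite{BLP} argument the paper points to; as written, your proposal does not reach it.
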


By saying that $\varphi$ is multiplicative, we mean here that
$\varphi(xy)=\varphi(x)\varphi(y)$ holds for all $x,y\in M$.

When $A$ is a ring (possibly without unit) of $\R$-valued functions
on a set, the analogous result for multiplicative states of $(A,
A_\plus)$ was proved by Bonsall, Lindenstrauss and Phelps in 1966
(\cite{BLP}, Thm.~13). The same proof applies, essentially literally,
in our situation as well. Since Prop.\ \ref{multstatisxtr} and Cor.\
\ref{cor2multstatisxtr} won't be used elsewhere in this paper, we
skip over the details.
\qed

Combining Prop.\ \ref{multstatisxtr} with Thm.\ \ref{reinzustarchmod}
we conclude:

\begin{cor}\label{cor2multstatisxtr}
Suppose $\R\subset A$. Assume that $M$ is an archimedean quadratic
module in $A$. Then the pure states of $(A,M,1)$ are precisely the
elements of $X(M)$.
\qed
\end{cor}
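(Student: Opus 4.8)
The plan is to deduce Corollary \ref{cor2multstatisxtr} from the two tools already assembled: Theorem \ref{reinzustarchmod} (applied with $I=A$, $u=1$), which controls the pure states of $(A,M,1)$, and Proposition \ref{multstatisxtr}, which tells us when a multiplicative state is extremal. Since $M$ is an archimedean quadratic module in $A$ and $\R\subset A$, we have $1\in M$ as an order unit of $(A,M)$, so $S(A,M,1)$ is the usual compact convex set of monic states and its extremal points are by definition the pure states.

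First I would establish the inclusion ``pure states $\subseteq X(M)$''. Let $\varphi$ be a pure state of $(A,M,1)$. By Theorem \ref{reinzustarchmod} (with $I=A$, $u=1$, noting $\Sigma A^2\subset M$ so $M$ is a quadratic pseudomodule with order unit $1$), $\varphi$ satisfies the multiplicative law \eqref{multregel}: $\varphi(ab)=\varphi(a)\varphi(b)$ for all $a,b\in A$ (here $\varphi(au)=\varphi(a)$ since $u=1$). In particular $\varphi$ is a ring homomorphism $A\to\R$ — this is exactly the content of Corollary \ref{koreigreinzust} in the archimedean-semiring case, and the same conclusion holds here. Since $\varphi|_M\ge0$, we get $\varphi\in X(M)$.

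Next I would establish the reverse inclusion ``$X(M)\subseteq$ pure states''. Let $\phi\in X(M)$. Then $\phi$ is a ring homomorphism with $\phi|_M\ge0$ and $\phi(1)=1$, so $\phi\in S(A,M,1)$ and $\phi$ is multiplicative in the sense of Proposition \ref{multstatisxtr} (indeed $\phi(xy)=\phi(x)\phi(y)$ for all $x,y\in A$). To invoke Proposition \ref{multstatisxtr} with $I=A$ we must check its hypotheses: $M$ is a quadratic pseudomodule, $A=M-M$ (true since $\frac12\in A$ gives $\Sigma A^2-\Sigma A^2=A\subseteq M-M$), and $a^2\in M$ for every $a\in A$ (true since $\Sigma A^2\subset M$). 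Hence $\phi$ is extremal in the cone $S(A,M)$; since $\phi(1)=1$, it is also extremal in the base $S(A,M,1)$, i.e.\ a pure state. (Concretely: if $2\phi=\varphi_1+\varphi_2$ with $\varphi_i\in S(A,M,1)$, then extremality in the cone gives $\varphi_i=c_i\phi$ with $c_i\ge0$, and evaluating at $1$ forces $c_1=c_2=1$.)

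I do not anticipate a serious obstacle here: the corollary is a clean assembly of Theorem \ref{reinzustarchmod} and Proposition \ref{multstatisxtr}, and the only points needing a word of care are (i) passing between extremality in the cone $S(A,M)$ and extremality in its compact base $S(A,M,1)$, which is routine given that every nonzero state takes a positive value at the order unit $1$, and (ii) checking that $\R\subset A$ together with $\Sigma A^2\subset M$ supplies all the side hypotheses of Proposition \ref{multstatisxtr}. One small subtlety worth flagging: Theorem \ref{reinzustarchmod} as stated allows pure states of type (II) with $\varphi(u^2)=0$, but for $u=1$ this would force $\varphi(1)=0$, contradicting monicity, so only type (I) occurs and $\varphi$ is genuinely a ring homomorphism — which is why the characterization comes out so cleanly in this case.
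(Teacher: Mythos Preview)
Your proof is correct and follows precisely the route the paper indicates: the paper's own argument is simply the one-line remark ``Combining Prop.\ \ref{multstatisxtr} with Thm.\ \ref{reinzustarchmod} we conclude'', and you have faithfully unpacked both halves of that combination. The extra care you take in passing from extremality in the cone $S(A,M)$ to extremality in the base $S(A,M,1)$, and in noting that type~(II) cannot occur when $u=1$, is appropriate and fills in exactly the details the paper leaves implicit.
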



\section{Existence of order units in ideals}\label{sect:ouintersect}

Given an archimedean $S$-module $M$ in $A$, and given an ideal $I$
of $A$, we are going to study when the cutted-down pseudomodule
$M\cap I$ has an order unit in $I$. See \ref{remmoral} for why this
is an important question.

\begin{prop}\label{oekrit}
Let $S\subset A$ be a semiring and $M\subset A$ an $S$-pseudomodule,
and let $I\subset A$ be an ideal generated by $x_1,\dots,x_n$. Assume
that one of the following two conditions holds:
\begin{itemize}
\item[(1)]
$(A,S)$ has an order unit $u$, and $x_1,\dots,x_n\in M$;
\item[(2)]
$(A,M)$ has an order unit $u$, and $x_1,\dots,x_n\in S$.
\end{itemize}
Then $v:=u(x_1+\cdots+x_n)$ is an order unit of $(I,M\cap I)$.
\end{prop}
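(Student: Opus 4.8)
The plan is to show directly that every element of $I$ is bounded in absolute value by a positive multiple of $v$ with respect to $\le_{M\cap I}$; equivalently, using the notation of Section~3, that $I\subset O(M\cap I,v)$. Since $I$ is generated by $x_1,\dots,x_n$, a general element of $I$ has the form $a_1x_1+\cdots+a_nx_n$ with $a_i\in A$, so by additivity of $O(M,\cdot)$ (see \ref{oprop}(a)) it suffices to handle each $a_ix_i$ separately, i.e.\ to produce, for each $i$ and each $a\in A$, an integer $N$ with $Nv\pm a x_i\in M\cap I$. The membership in $I$ is automatic since $v,ax_i\in I$, so the real content is the membership in $M$.

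First I would treat a single term under hypothesis (1). Fix $i$ and $a\in A$. Because $u$ is an order unit of $(A,S)$, there is $m\in\N$ with $mu\pm a\in S$; since $x_i\in M$ and $M$ is an $S$-pseudomodule, multiplying gives $(mu\pm a)x_i\in M$, that is $mux_i\pm ax_i\in M$. Now $ux_i$ is one of the summands of $v=u(x_1+\cdots+x_n)$ and the other summands $ux_j$ ($j\ne i$) lie in $M$ as well (again $x_j\in M$, $u\in S$), so $v-ux_i\in M$, whence $mv\pm ax_i = (mux_i\pm ax_i) + m(v-ux_i)\in M$. Thus $ax_i\in O(M,v)$, and since $ax_i\in I$ it lies in $O(M\cap I,v)$; summing over $i$ gives $I\subset O(M\cap I,v)$, i.e.\ $v$ is an order unit of $(I,M\cap I)$. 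The argument under hypothesis (2) is the mirror image: now $u$ is an order unit of $(A,M)$, so for $a\in A$ there is $m\in\N$ with $mu\pm a\in M$; multiplying by $x_i\in S$ gives $mux_i\pm ax_i\in M$, and the $M$-containment $v-ux_i\in M$ now uses $x_j\in S$ and $ux_j\in M$ (as $M$ is an $S$-pseudomodule), so again $mv\pm ax_i\in M$ and we conclude as before.

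There is essentially no hard obstacle here; the only point that needs a moment's care is making sure that in each case $v-ux_i$ (equivalently the partial sum $u\sum_{j\ne i}x_j$) actually lies in $M$, which is where the asymmetry between the two hypotheses enters: in case (1) one uses $x_j\in M$ together with closure of $M$ under multiplication by $u\in S$, while in case (2) one uses $x_j\in S$ together with closure of $M$ under multiplication by elements of $S$. Once that is noted, everything reduces to the elementary bounding identity $mv\pm ax_i=(mux_i\pm ax_i)+m(v-ux_i)$ and the definition of an order unit in \ref{states}; no appeal to pure states or the Goodearl--Handelman machinery is needed for this particular statement.
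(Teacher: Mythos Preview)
Your proof is correct and follows essentially the same idea as the paper's: bound the coefficients $a_i$ using the order unit $u$, then use $SM\subset M$ to land in $M$. The paper's version is slightly more streamlined in that it treats all terms at once via the single identity $kv\pm b=\sum_{i=1}^n(ku\pm a_i)x_i$ (choosing one $k$ working for all $a_i$), whereas you handle each $a_ix_i$ separately and then add; but this is a cosmetic difference, not a substantive one.
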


\begin{proof}
Any $b\in I$ can be written $b=\sum_{i=1}^na_ix_i$ with $a_i\in A$
($i=1,\dots,n$). By assumption there is $k\in\N$ with $ku\pm a_i\in
S$ (1), resp.\ $ku\pm a_i\in M$ (2), for $i=1,\dots,n$. Hence
$kv\pm b=\sum_{i=1}^n(ku\pm a_i)x_i$ lies in $M$.
\end{proof}

For $(I,M\cap I)$ to have an order unit, it is obviously necessary
that $I$ is generated by elements of $M$. We see that this condition
is already sufficient in many cases:

\begin{cor}\label{cor2oekrit}
Let $M$ be a pseudomodule over some archimedean semiring $S$ in $A$.
If $I$ is any ideal in $A$ generated by finitely many elements of
$M$, then $(I,M\cap I)$ has an order unit.
\end{cor}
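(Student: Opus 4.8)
The plan is to deduce Corollary \ref{cor2oekrit} directly from Proposition \ref{oekrit}, since all the real work has already been done there. The only gap to bridge is that \ref{oekrit} requires an order unit of $(A,S)$ (in case (1)) or of $(A,M)$ (in case (2)), whereas the corollary only assumes that $S$ is an archimedean semiring. But an archimedean semiring $S$ contains $1$, and $1$ is an order unit of $(A,S)$ precisely because $S$ is archimedean: by definition \ref{archrappl}, for every $a\in A$ there is $n\in\N$ with $n\pm a\in S$, i.e.\ $a\le_S n\cdot 1$, which is exactly the statement that $1$ is an order unit of $(A,S)$ (see also Remark \ref{remsomu}.1 or the Example following \ref{states}). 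So the hypothesis ``$(A,S)$ has an order unit $u$'' is automatically satisfied with $u=1$.

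Concretely, I would argue as follows. Let $I$ be an ideal of $A$ generated by finitely many elements $x_1,\dots,x_n\in M$. Since $S$ is archimedean, $1$ is an order unit of $(A,S)$. Thus condition (1) of Proposition \ref{oekrit} holds with $u=1$: indeed $x_1,\dots,x_n\in M$ by hypothesis. Applying \ref{oekrit}, we conclude that $v:=1\cdot(x_1+\cdots+x_n)=x_1+\cdots+x_n$ is an order unit of $(I,M\cap I)$. In particular $(I,M\cap I)$ has an order unit, as claimed.

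There is essentially no obstacle here; the corollary is a clean specialization. The one point worth a sentence of care is the implicit claim (already made in the paragraph preceding \ref{cor2oekrit}) that $M\subset I$ being generated by elements of $M$ is necessary for the existence of an order unit: any order unit of $(I,M\cap I)$ must lie in $M\cap I\subset M$, and must dominate each generator of $I$ up to $\le_{M\cap I}$, forcing $I$ to be generated by finitely many elements of $M$ up to the appropriate bookkeeping — but this direction is not needed for the corollary and can be left as the informal remark it already is. So the proof is just the two lines above: observe that archimedean $S$ makes $1$ an order unit of $(A,S)$, then invoke Proposition \ref{oekrit}(1).
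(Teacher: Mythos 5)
Your proof is correct and is exactly the paper's own argument: the paper proves the corollary by simply invoking Proposition \ref{oekrit}(1), and you have merely spelled out the (true) observation that $S$ being archimedean means $1$ is an order unit of $(A,S)$, so that $v=x_1+\cdots+x_n$ works. The extra remarks about necessity are not needed, as you note, but do no harm.
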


\begin{proof}
Indeed, this is \ref{oekrit}(1).
\end{proof}

On the contrary, when $M$ is merely an archimedean quadratic module
in $A$, there do in general exist ideals $I$, generated by finitely
many elements of $M$, such that $(I,M\cap I)$ does not have an order
unit. We shall now construct such examples within a somewhat more
general framework.

\begin{prop}\label{oekritqm}
Assume $\frac12\in A$. Let $M$ be an archimedean quadratic module in
$A$, and let $I$ be a finitely generated ideal in $A$.
\begin{itemize}
\item[(a)]
$(I^2,\,M\cap I^2)$ always has an order unit.
\item[(b)]
$(I,M\cap I)$ has an order unit if and only if $(I/I^2,\,
\ol{M\cap I})$ has an order unit.
\end{itemize}
\end{prop}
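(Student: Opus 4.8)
The plan is to treat part (a) as a consequence of the earlier corollary on products of ideals, and part (b) by a careful lifting argument across the exact sequence $0\to I^2\to I\to I/I^2\to 0$. For (a), I would argue that since $M$ is archimedean, $1$ is an order unit of $(A,M)$; write $I=(x_1,\dots,x_n)$. The products $x_ix_j$ generate $I^2$. Using \ref{oprop}(b) (or more directly the corollary following \ref{remsomu}), since $1\in O(M,1)$ is an order unit for $(A,M)$ "twice over", one gets $x_ix_j\in O(M,1)$ for all $i,j$ — indeed each $x_i\in A=O(M,1)$ — and then $\sum_{i,j}x_i^2$ (say) serves as the order unit: for $b\in I^2$, write $b=\sum a_{ij}x_ix_j$, bound each $a_{ij}$ by $k\in\N$ in $\le_M$ (archimedeanness), and use that $M$ is a quadratic module to absorb the mixed terms via $2x_ix_j\le_M x_i^2+x_j^2$ applied after multiplying by squares. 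More cleanly, I expect the argument to mirror \ref{oekrit}(2): take $u=1$, note $x_i^2\in S=\Sigma A^2\subset M$, so $v=x_1^2+\cdots+x_n^2$ is an order unit of $(I^2,M\cap I^2)$ by a direct computation $kv\pm b=\sum(\text{square})\cdot(\text{something in }M)$, using $\frac12\in A$ to split $a_{ij}x_ix_j$ into a sum of squares times elements bounded by $k$.

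For part (b), the "only if" direction is immediate: if $u$ is an order unit of $(I,M\cap I)$, then its image $\ol u$ in $I/I^2$ is an order unit of $(I/I^2,\ol{M\cap I})$, since the quotient map is surjective and sends $M\cap I$ onto its image while respecting $\le$. The substance is the "if" direction. Suppose $\ol v\in I/I^2$ is an order unit of $(I/I^2,\ol{M\cap I})$; lift it to some $v_0\in M\cap I$ (possible since $\ol{M\cap I}$ is the image of $M\cap I$). By part (a), pick an order unit $w$ of $(I^2,M\cap I^2)$. I claim $u:=v_0+w$ (or perhaps $v_0+cw$ for suitable $c\in\N$, or $v_0^2+w$) is an order unit of $(I,M\cap I)$. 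Given $b\in I$, its class $\ol b\in I/I^2$ satisfies $\ol b\le_{\ol M}n\ol v$ for some $n\in\N$, i.e. $n v_0-b\in (M\cap I)+I^2$ — more precisely $nv_0-b=m+c$ with $m\in M\cap I$ and $c\in I^2$. Then $c\in I^2\subseteq I$, and by choice of $w$ there is $k\in\N$ with $kw\pm c\in M\cap I^2\subseteq M\cap I$; similarly handle $nv_0+b$. Adding appropriately, $n v_0+kw-b=m+(kw-c)\in M$ and likewise $nv_0+kw+b\in M$ after enlarging constants, so with $u$ built from $v_0$ and $w$ we get $\ell u\pm b\in M\cap I$ for suitable $\ell$.

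The main obstacle I anticipate is bookkeeping in (b): one must be careful that the lift $v_0$ of $\ol v$ genuinely lies in $M\cap I$ (not merely in $I$ with image in $\ol M$), which is fine because $\ol{M\cap I}$ is \emph{defined} as the image of $M\cap I$; and one must ensure the single element $u$ works uniformly, i.e. that a fixed $u$ dominates every $b\in I$ — this forces $u$ to dominate both a lift of the order unit of $I/I^2$ and the order unit of $I^2$ simultaneously, which is why taking $u=v_0+w$ (with the two constants $n$ and $k$ chosen per $b$) is the right move rather than trying to be cleverer. A secondary subtlety is that $\frac12\in A$ is needed precisely to invoke the quadratic-module structure in part (a) (splitting cross terms $x_ix_j$ and, implicitly, via \ref{oprop}(d), to know $O(M,1)$ is a ring so that products of bounded elements are bounded); I would flag this at the point of use. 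Everything else is the routine $\le_M$-arithmetic that the paper has been doing throughout Section 3.
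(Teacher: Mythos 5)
Your proposal is correct and takes essentially the paper's route: part (a) is handled there by noting $4ab=(a+b)^2-(a-b)^2$, so $I^2$ is generated by squares and \ref{oekrit}(2) applies verbatim, while your choice $v=x_1^2+\cdots+x_n^2$ with the absorption $2x_ix_j\le_M x_i^2+x_j^2$ (note the $x_i^2$ alone do not generate $I^2$, so this estimate, not \ref{oekrit}(2) directly, is what carries the argument) is exactly the alternative the paper records in the remark right after the proposition. Part (b) is the paper's Lemma \ref{hsoekrit} (splicing order units along $0\to I^2\to I\to I/I^2\to 0$) proved inline, with the same lifting of the order unit of $(I/I^2,\ol{M\cap I})$ to $M\cap I$ and the sum $v_0+w$ as order unit.
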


For the proof we need the following easy observation:

\begin{lem}\label{hsoekrit}
Let $G$ be an abelian group, $H\subset G$ a subgroup and $M\subset G$
a semigroup. If $(G/H,\ol M)$ and $(H,M\cap H)$ both have order
units, then $(G,M)$ has an order unit.
\end{lem}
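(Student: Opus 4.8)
\textbf{Proof proposal for Lemma \ref{hsoekrit}.}
The plan is to produce an order unit of $(G,M)$ explicitly from the two given order units by ``lifting'' the one on the quotient and adding a correction coming from the subgroup. Let $v\in H\cap M$ be an order unit of $(H,M\cap H)$, and let $\ol w\in\ol M$ be an order unit of $(G/H,\ol M)$; choose a representative $w\in G$ of $\ol w$ with $w\in M$ (possible since $\ol M$ is the image of $M$, so some representative of $\ol w$ lies in $M$). I claim that $u:=v+w$ is an order unit of $(G,M)$. Note first that $u\in M$ since $M$ is closed under addition.

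The key step is the following two-stage bounding argument. Fix $x\in G$. Applying the order-unit property of $\ol w$ to the class $\ol x\in G/H$, there is $m\in\N$ with $m\ol w-\ol x\in\ol M$, i.e.\ there exists $y\in M$ with $mw-x-y\in H$. Since $y\ge_M 0$ we certainly have $x\le_M x+y$, and by construction $x+y = mw - h$ for some $h\in H$; hence it suffices to bound $mw-h$ from above (with respect to $\le_M$) by a multiple of $u$. Here is where the second order unit enters: applying the order-unit property of $v$ in $(H,M\cap H)$ to the element $-h\in H$, there is $k\in\N$ with $kv-(-h)=kv+h\in M\cap H\subset M$, i.e.\ $-h\le_M kv$. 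Adding $mw\ge_M 0$ (which lies in $M$) gives $mw-h\le_M mw+kv$. Finally, with $N:=\max\{m,k\}$ we get $mw+kv\le_M Nw+Nv=Nu$, using that $(N-m)w$ and $(N-k)v$ lie in $M$. Chaining the inequalities, $x\le_M x+y=mw-h\le_M mw+kv\le_M Nu$, so $x\le_M Nu$, which is exactly the statement that $u$ is an order unit of $(G,M)$.

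I expect the only subtlety — hardly an obstacle — to be the bookkeeping about representatives: one must observe that ``$\ol w$ is an order unit of $(G/H,\ol M)$'' already forces $\ol w\in\ol M$ (by definition of order unit), so a representative of $\ol w$ lying in $M$ exists, and likewise that the witnesses $y$ and the correction term $h$ can be handled purely through the relations $\le_M$ and $\le_{M\cap H}$ without ever needing $M-M=G$ or $\supp(M)=0$. Everything else is a routine manipulation of the relation $\le_M$, which by \ref{states} is transitive and compatible with addition, so the chained inequalities above are legitimate. No choice of generators or finiteness hypothesis is used, so the lemma holds for arbitrary abelian groups and subgroups.
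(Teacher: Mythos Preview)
Your proof is correct and follows essentially the same approach as the paper: both arguments take the sum of the two given order units and show it is an order unit of $(G,M)$. The paper phrases the argument more tersely via the set-theoretic characterization $G=\Z(u+v)+M$, using the identity $-v=-(u+v)+u$ to absorb $\Z v$ (and symmetrically $\Z u$) into $\Z(u+v)+M$, whereas you unwind the same computation as an explicit chain of $\le_M$-inequalities; the content is identical.
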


\begin{proof}
By assumption there exists $v\in M\cap H$ with $H\subset\Z v+M$, and
there exists $u\in M$ with $G/H=\Z\bar u+\ol M$, i.\,e.\ $G=\Z u+M+H$.
Hence $G=\Z u+\Z v+M$. From $-v=-(u+v)+u$ we get $\Z v\subset\Z(u+v)
+M$, and similarly $\Z u\subset\Z(u+v)+M$. Therefore $G=\Z(u+v)+M$,
which means that $u+v$ is an order unit of $(G,M)$.
\end{proof}

\begin{proof}[Proof of \ref{oekritqm}]
The ideal $I^2$ is generated by squares since $4ab=(a+b)^2-(a-b)^2$.
Hence (a) is a particular case of \ref{oekrit}(2). Assertion (b)
follows from (a) together with Lemma \ref{hsoekrit}.
\end{proof}

\begin{rems}
\hfil

1.\
In the situation of \ref{oekritqm}, assume that $I=(b_1,\dots,b_m)$.
Then $u:=b_1^2+\cdots+b_m^2$ is an order unit of $(I^2,\,M\cap I^2)$.
Indeed, $u\pm b_ib_j$ is a sum of squares for all $i$, $j$, and so
the $b_ib_j$ lie in $O(M,u)$. Since $O(M,u)$ is an ideal in $A$
(\ref{oprop}), and since the $b_ib_j$ generate $I^2$, we have $I^2
\subset O(M,u)$.

2.\
In \ref{oekritqm}(b), the quotient $I/I^2$ can be replaced by $I/J$
for any ideal $J\subset I$ which is generated by finitely many sums
of squares.
\end{rems}

Here is a sample application.

\begin{prop}\label{oekrit2}
Assume $\frac12\in A$. Let $M=\QM(g_1,\dots,g_r,h_1,\dots,h_m)$ be
archimedean in $A$,
and let $I=(g_1,\dots,g_r)$. Assume that $I$ is $M$-convex, $I=
\sqrt I$, and that $h_1,\dots,h_m$ are not zero divisors modulo~$I$.
Then $(I,M\cap I)$ has an order unit if and only if
$$\bigl(I/I^2,\>\Sigma A^2\cdot\bar g_1+\cdots+\Sigma A^2\cdot
\bar g_r\bigr)$$
has an order unit.
\end{prop}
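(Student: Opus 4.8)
The plan is to deduce Proposition~\ref{oekrit2} from Proposition~\ref{oekritqm}(b) by identifying the two quotient semigroups that appear there. By \ref{oekritqm}(b), applied to the archimedean quadratic module $M$ and the finitely generated ideal $I=(g_1,\dots,g_r)$, the pseudomodule $(I,M\cap I)$ has an order unit if and only if $(I/I^2,\,\ol{M\cap I})$ does. So it suffices to show that $\ol{M\cap I}$ and $\Sigma A^2\bar g_1+\cdots+\Sigma A^2\bar g_r$ coincide as subsemigroups of $I/I^2$, or at least that one has an order unit iff the other does (which by Corollary~\ref{cor2gh}-type reasoning — note $2a\in N\To a\in N$ trivially for these cones once we work over a ring with $\frac12$, or by passing to $A_\Q$ — would follow from, say, their having equal ``saturation'', but equality is cleaner and is what I expect to hold).

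First I would establish the inclusion $\Sigma A^2 g_1+\cdots+\Sigma A^2 g_r+I^2\subset M\cap I$. This is immediate: each $\Sigma A^2 g_i\subset M$ by definition of $M=\QM(g_1,\dots,g_r,h_1,\dots,h_m)$ and lies in $I$; and $I^2\subset I$, while $I^2\subset M$ because $I$ is generated by $g_1,\dots,g_r\in M$ and $M$ is a quadratic module, so $a g_i\cdot b g_j$-type products, and more to the point $\QM(g_1,\dots,g_r)$'s square terms, land in $M$ — actually the cleanest statement is that $I^2\subset\supp(M+AI)$ is not what I want; rather I should note $g_ig_j\in M$? That need not hold. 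Let me instead argue $I^2\subset M$ via: for $a,b\in A$, $4(ag_i)(bg_j)=(ab)(g_i+g_j)^2-\cdots$ — this is getting complicated, so the honest route is that $I^2$ need not lie in $M$, and the reduction must go the other way. So the real content is the reverse inclusion modulo $I^2$.

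Thus the heart of the proof is: $M\cap I\subset\Sigma A^2 g_1+\cdots+\Sigma A^2 g_r+I^2$. Here is where the hypotheses $I=\sqrt I$, $I$ $M$-convex, and ``$h_1,\dots,h_m$ not zero divisors mod $I$'' enter. Take $f\in M\cap I$; write $f=\sigma_0+\sum_i\sigma_i g_i+\sum_j\tau_j h_j$ with $\sigma_0,\sigma_i,\tau_j\in\Sigma A^2$. Since $f\in I$ and $g_i\in I$, we get $\sigma_0+\sum_j\tau_j h_j\in I$. The plan is to show each $\tau_j h_j\in I$ and in fact $\tau_j\in I$, hence $\sigma_0\in I$, and then $\sigma_0\in I^2$: a sum of squares lying in a radical ideal $I$ has each square-summand in $I$ (as $I=\sqrt I$), so $\sigma_0\in\Sigma I^2$? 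More precisely if $\sigma_0=\sum c_k^2\in I$ and $I$ is radical, each $c_k\in I$? No — only the radical of the ideal generated by $\{c_k^2\}$; but $\sum c_k^2\in I$ and $I$ radical real-ish... this needs $-1\notin\Sigma A^2$ type positivity, i.e. $M$-convexity of $I$ is exactly designed so that $\sum\tau_jh_j\in I$ together with the $h_j$ being non-zero-divisors mod $I$ forces $\tau_j\in I$, and then $\tau_j\in\sqrt I=I$ gives $\tau_jh_j\in I^2$, and similarly $\sigma_0\in I$ forces (by $M$-convexity, since $\sigma_0\le_M f\in I$, and $I$ is $M$-convex meaning $0\le_M a\le_M b\in I\To a\in I$) hmm — actually $M$-convexity of $I$ directly gives $\sigma_0\in I$, then $\sigma_0=\sum c_k^2$ with $\sigma_0\in I=\sqrt I$ yields $c_k\in I$ for each $k$ (because $c_k^2\le_M\sigma_0$ and... no, because $c_k^{2N}\mid$ — simplest: $c_k^2\leq\sigma_0$ as elements and $\sigma_0\in I$; is $c_k^2\in I$? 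We have $0\le c_k^2\le\sigma_0$, all in the ordering, and $I$ is $M$-convex so yes $c_k^2\in I$, hence $c_k\in\sqrt I=I$), so $\sigma_0\in I^2$. The main obstacle, and the step deserving the most care, is precisely this bookkeeping: using $M$-convexity of $I$ to push $\sigma_0$ and the $\tau_jh_j$ into $I$, using that the $h_j$ are non-zero-divisors mod $I$ to pull back to $\tau_j\in I$, and using $I=\sqrt I$ to upgrade ``in $I$'' to ``in $I^2$'' for the square terms — after which $f\equiv\sum_i\sigma_ig_i\pmod{I^2}$, which is exactly an element of $\Sigma A^2\bar g_1+\cdots+\Sigma A^2\bar g_r$ in $I/I^2$. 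Combined with the trivial reverse inclusion $\Sigma A^2 g_i\subset M\cap I$, this shows $\ol{M\cap I}=\Sigma A^2\bar g_1+\cdots+\Sigma A^2\bar g_r$ in $I/I^2$, and Proposition~\ref{oekritqm}(b) finishes the argument.
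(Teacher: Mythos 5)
Your route is the paper's: reduce via Proposition \ref{oekritqm}(b), observe the trivial inclusion $\Sigma A^2g_i\subset M\cap I$, and prove $M\cap I\subset\Sigma A^2g_1+\cdots+\Sigma A^2g_r+I^2$ using $M$-convexity, $I=\sqrt I$ and the non-zero-divisor hypothesis; your handling of $\sigma_0$ (break it into single squares $c_k^2$, put each $c_k^2$ into $I$ by $M$-convexity, then $c_k\in\sqrt I=I$, hence $\sigma_0\in I^2$) is exactly right.

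There is, however, one link that fails as written: from $\tau_j\in I$ you conclude ``$\tau_jh_j\in I^2$''. Since $h_j\notin I$ in general, $\tau_j\in I$ only gives $\tau_jh_j\in I$; what you need is $\tau_j\in I^2$, and radicality alone does not supply it (a sum of squares in a radical ideal need not have its summands' roots in the ideal: $x^2+y^2$ generates a radical ideal of $\R[x,y]$ containing neither $x$ nor $y$), so the summary phrase ``use $I=\sqrt I$ to upgrade `in $I$' to `in $I^2$'\,'' is not enough by itself. The repair is precisely the argument you already ran for $\sigma_0$: write $\tau_j=\sum_kc_k^2$; each $c_k^2\in\Sigma A^2\subset M$ and $\tau_j\in I$, so $M$-convexity gives $c_k^2\in I$ term by term, then $I=\sqrt I$ gives $c_k\in I$, hence $\tau_j\in I^2$ and $\tau_jh_j\in I^2$. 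The paper organizes this more economically by setting $h_0:=1$ and decomposing $\sigma_0+\sum_j\tau_jh_j$ from the start into single terms $a^2h_j\in M$: one application of $M$-convexity puts each $a^2h_j$ in $I$, the non-zero-divisor hypothesis (trivially satisfied by $h_0$) gives $a^2\in I$, and $I=\sqrt I$ gives $a\in I$, so every term lies in $I^2$ at once. With that one step repaired, your proof coincides with the paper's.
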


Recall here that $I$ is said to be $M$-convex if $I=\supp(M+I)$, or
equivalently, if $a$, $b\in M$ and $a+b\in I$ imply $a$, $b\in I$.
Yet another equivalent formulation is that $a$, $c\in I$, $b\in A$
and $a\le_Mb\le_M c$ together imply $b\in I$. This last version
explains why this property is called $M$-convexity.

\begin{proof}
This follows from Prop.\ \ref{oekritqm}(b) once we have shown
$$M\cap I\>\subset\>\Sigma A^2\cdot g_1+\cdots+\Sigma A^2\cdot g_r+
I^2.$$
To this end let $f\in M\cap I$, say
$$f=\sum_{i=1}^rs_ig_i+\sum_{j=0}^mt_jh_j$$
with $s_i$, $t_j\in\Sigma A^2$ und $h_0:=1$. Then $\sum_{j=0}^mt_j
h_j$ lies in $I$. This element is a sum of products $a^2h_j$ with
$a\in A$ and $j\in\{0,\dots,m\}$. Since $I$ is $M$-convex, all these
$a^2h_j$ lie in $I$. Moreover $a\in I$ in each case since $I=\sqrt I$
and the $h_j$ are not zero divisors mod~$I$. Therefore $\sum_jt_jh_j
\in I^2$, which proves the proposition.
\end{proof}

\begin{example}\label{nexoe}
1.\
In a geometric situation, e.\,g.\ for $A=\R[x_1,\dots,x_n]$, the
condition that $I$ is $M$-convex is satisfied, for example, when
$I$ is the full vanishing ideal of a real algebraic set $V\subset
\R^n$ for which $X(M)\cap V$ is Zariski-dense in $V$.
\smallskip

2.\
Let $A=\R[x,y]$ and $M=\QM(x,y,1-x-y)$, an archimedean quadratic
module in $A$. The ideal $I=(x)$ in $A$ is generated by an element
of $M$, but $(I,M\cap I)$ has no order unit.

Indeed, this is a particular case of Prop.\ \ref{oekrit2}: Via the
identification $\R[y]\isoto I/I^2$, $g(y)\mapsto xg(y)+I^2$, the cone
$\ol{M\cap I}=\Sigma\bar x$ in $I/I^2$ corresponds to the cone of
sums of squares in $\R[y]$. Clearly, this cone does not have an order
unit.
\end{example}


\section{First applications}

In this section we demonstrate how the approach via pure states gives
a uniform and elegant approach to many (if not most) of the important
known archimedean Stellens\"atze. Our proofs via pure states are
shorter and more conceptual than the previously known proofs. In
several cases we shall obtain versions that are considerably stronger
than previously known.

The selection of applications presented here is not exhaustive. We
plan to explain other applications elsewhere in a similar spirit.

\begin{thm}[Representation Theorem]\label{repth}
Let $M$ be a module over an archimedean semiring in $A$, and let
$f\in A$ with $f>0$ on $X(M)$. Then $nf\in M$ for some $n\in\N$.
\end{thm}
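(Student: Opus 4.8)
The plan is to apply the Goodearl--Handelman criterion (Theorem~\ref{goodhandl}) to the pair $(A,M)$ with order unit $u=1$: since $M$ is a module over an archimedean semiring, $1$ is an order unit of $(A,M)$ by the example following \ref{states}. Thus it suffices to check that $\varphi(f)>0$ for every pure state $\varphi$ of $(A,M,1)$, and then some positive integer multiple of $f$ lies in $M$.

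The key step is to identify the pure states. By Corollary~\ref{koreigreinzust}, every pure state $\varphi$ of $(A,M,1)$ is a ring homomorphism $A\to\R$. Moreover, since $\varphi|_M\ge0$, such a homomorphism belongs to $X(M)$. Hence each pure state $\varphi$ is an element of $X(M)$, and the hypothesis $f>0$ on $X(M)$ gives $\varphi(f)=f(\varphi)>0$ immediately. (One should note that $X(M)$ is nonempty here only if $-1\notin M$; but if $-1\in M$ then $M=A$ and the statement is trivial, so we may assume $X(M)\ne\emptyset$, or simply observe that the implication is vacuously fine when there are no pure states because then $M=A$ as well.)

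Putting these together: every pure state of $(A,M,1)$ evaluates $f$ to a strictly positive real, so Theorem~\ref{goodhandl} applies with $x=f$ and yields $n\ge1$ with $nf\in M$. This completes the proof.

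I do not expect a genuine obstacle: the entire content has been front-loaded into Corollary~\ref{koreigreinzust} (pure states are ring homomorphisms) and Theorem~\ref{goodhandl} (the separation/Krein--Milman machinery). The only point requiring a word of care is the degenerate case where $(A,M,1)$ has no pure states, equivalently $M$ is not proper; there the conclusion holds trivially since $f\in A=M$. Everything else is a direct citation.
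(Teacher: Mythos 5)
Your proof is correct and is essentially identical to the paper's: it invokes Corollary~\ref{koreigreinzust} to see that every pure state of $(A,M,1)$ lies in $X(M)$ and then applies Theorem~\ref{goodhandl} with order unit $1$. The extra remark about the degenerate case $-1\in M$ is harmless but not needed, since Theorem~\ref{goodhandl} is stated unconditionally.
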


This fundamental theorem has been proved and re-discovered in many
versions over the time, by Stone, Krivine, Kadison, Dubois and
others
(see, e.\,g., \cite{Kr}, \cite{Kr2}). See \cite{PD}, Sect.~5.6, for
detailed historical remarks.

\begin{proof}
This is immediate from the criterion \ref{goodhandl}, since every
pure state of $(A,M,1)$ is an element of $X(M)$ by Corollary
\ref{koreigreinzust}.
\end{proof}

The version for archimedean quadratic modules was proved by Putinar
\cite{Pu} in the geometric case, and by Jacobi \cite{Ja} in an
abstract setting. Again we get it easily using the approach via pure
states:

\begin{thm}\label{kaddubqm}
Let $M$ be an archimedean quadratic module in $A$, and let $f\in A$
with $f>0$ on $X(M)$. Then $nf\in M$ for some $n\in\N$.
\end{thm}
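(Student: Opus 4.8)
The strategy mirrors the proof of the Representation Theorem \ref{repth}: apply the Goodearl--Handelman criterion \ref{goodhandl} with $G=A$, the semigroup $M$, and order unit $u=1$ (which is legitimate since $M$ is archimedean, so $1$ is an order unit of $(A,M)$ by the Example following \ref{states}). It then suffices to show that $\varphi(f)>0$ for every pure state $\varphi$ of $(A,M,1)$; Theorem \ref{goodhandl} will then deliver $nf\in M$ for some $n\in\N$. As in \ref{xplgen} and Remark \ref{remsgoodhandl}.1, we may first replace $A$, $M$ by $A_\Q$, $M_\Q$ and so assume $\frac12\in A$; this does not change $X(M)$ nor the condition $f>0$ on $X(M)$, and it does not affect whether $nf\in M$ for some $n$.

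\textbf{Key step.} Fix a pure state $\varphi$ of $(A,M,1)$. Here the ambient ideal is $I=A$ itself, so by Corollary \ref{dicho} (Dichotomy), applied with $S=\Sigma A^2$ a preordering and $I=A$, case (II) cannot occur (it requires $I\ne A$). Hence $\varphi$ is of type (I): there is a ring homomorphism $\phi\colon A\to\R$ with $\phi(u)=\phi(1)\ne0$ and $\varphi=\frac1{\phi(1)}\phi=\phi$ (since $\varphi(1)=1$ forces $\phi(1)=1$). Moreover, Remark \ref{phiuposneg} (or Lemma \ref{tangvectors} together with the observation that $M$ is a quadratic module, so $1^2=1\in M$) gives $\phi\in X(M)$. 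Thus $\varphi=\phi$ is just evaluation at a point of $X(M)$, i.e.\ $\varphi(f)=f(\phi)$.

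\textbf{Conclusion.} Since $f>0$ on $X(M)$ by hypothesis and $\phi\in X(M)$, we get $\varphi(f)=f(\phi)>0$. As $\varphi$ was an arbitrary pure state of $(A,M,1)$, Theorem \ref{goodhandl} applies and yields $n\in\N$ with $nf\in M$, as desired.

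\textbf{Main obstacle.} There is essentially no obstacle left: all the real work has been absorbed into Theorem \ref{reinzustarchmod} (the multiplicative law for pure states of quadratic pseudomodules with an order unit), whose nontrivial ingredients were Lemmas \ref{taylcoeff} and \ref{burglem}. The only point requiring a moment's care is the reduction to $\frac12\in A$ and the verification that case (II) of the dichotomy is vacuous when $I=A$ — but the latter is recorded explicitly in \ref{dicho}. So the proof is a two-line deduction from the machinery already set up, exactly parallel to the proof of \ref{repth} but invoking \ref{reinzustarchmod} in place of \ref{eigreinzust}.
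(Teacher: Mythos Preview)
Your proof is correct and follows essentially the same approach as the paper: apply the Goodearl--Handelman criterion \ref{goodhandl} with order unit $1$, and use the multiplicative law for pure states of archimedean quadratic modules to identify pure states with points of $X(M)$. The only cosmetic differences are that you route through the Dichotomy \ref{dicho} (and Remark \ref{phiuposneg}) rather than citing Theorem \ref{reinzustarchmod} directly, and your preliminary reduction to $\frac12\in A$ is unnecessary since that reduction is already performed inside the proof of \ref{reinzustarchmod}---but neither affects correctness.
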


\begin{proof}
The proof is the same as for Theorem \ref{repth}, up to replacing the
reference to Cor.\ \ref{koreigreinzust} by a reference to Thm.\
\ref{reinzustarchmod}.
\end{proof}

\begin{rem}
We just remind the reader that Theorems \ref{repth} and
\ref{kaddubqm} have many celebrated applications. Among the best ones
known are the Positivstellens\"atze by Schm\"udgen \cite{Sm} and by
Putinar \cite{Pu}.
\end{rem}

The following membership criterion, though more technical, played an
important role in the proofs of various Nichtnegativstellens\"atze
from the last years (see, e.\,g., \cite{Sch:guide} Sect.~3, in
particular 3.1.9):

\begin{thm}\label{sumbiti}
Let $M$ be an archimedean module over a semiring $S$ in $A$, and
assume that $S$ is either archimedean or $S$ is a preordering. Let
$f\in A$ with $f\ge0$ on $X(M)$. Suppose there is an identity $f=b_1
s_1+\cdots+b_rs_r$ with $b_i\in A$ and $s_i\in S$ such that $b_i>0$
on $Z(f)\cap X(M)$ ($i=1,\dots,r$). Then $nf\in M$ for some $n\in\N$.
\end{thm}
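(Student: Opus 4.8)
The strategy is to invoke the membership criterion in Corollary~\ref{moral} (or, equivalently, apply Theorem~\ref{goodhandl} directly), so the task reduces to exhibiting an ideal $I$ containing $f$ for which $(I,M\cap I)$ has an order unit $u$, and checking that every pure state of $(I,M\cap I,u)$ is strictly positive at $f$. The natural candidate is $I:=(b_1,\dots,b_r)+\supp(M)$, or more economically the ideal $I:=(b_1s_1,\dots,b_rs_r)$ generated by the summands; since each $b_is_i$ may fail to lie in $M$, one should instead take $I$ to be the ideal generated by finitely many elements of $M$ that ``dominate'' the $b_is_i$. Concretely, because $M$ is archimedean, for each $i$ there is $k\in\N$ with $k\pm b_i\in$ (the relevant semiring); passing to a large enough multiple one arranges that $b_is_i + c_i u_0$ lies in $M$ for suitable squares/semiring elements, and then $I:=\supp(M+Af)$ is the clean choice — by Corollary~\ref{zeugen} this ideal automatically has $f$ itself as an order unit, since $M$ is a pseudomodule over the archimedean semiring (in the preordering case one first reduces via $\Q\subset A$ and uses Theorem~\ref{reinzustarchmod} in place of Proposition~\ref{eigreinzust}). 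So set $u:=f$, $I:=\supp(M+Af)$.

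With $I$ and $u=f$ fixed, the heart of the argument is the Dichotomy, Corollary~\ref{dicho}: every pure state $\varphi$ of $(I,M\cap I,f)$ is either (I) a scaled ring homomorphism $\varphi=\frac1{\phi(f)}\phi|_I$ with $\phi\in X(M)$ and $\phi(f)\ne0$, or (II) $\phi$-linear for some $\phi\in X((M\cap I)+I)\subseteq X$ with $\phi(f)=\varphi(f^2)=0$. In case (I): since $\phi\in X(M)$ and $f\ge0$ on $X(M)$ we get $\phi(f)\ge0$, hence $\phi(f)>0$, so $\varphi(f)=\frac{\phi(f)}{\phi(f)}=1>0$. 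In case (II): here $\phi(f)=0$, so $\phi\in Z(f)\cap X(M)$ (one must check $\phi\in X(M)$, which follows because $\phi\in X(T)$ with $T=\{t:tf\in M\}$ and $1\in T$ forces $\phi\ge0$ on $M$ via $uM\subset M$-type reasoning, or directly from Remark~\ref{phiuposneg} since $M\cap I$ sits inside a quadratic module when $S$ is a preordering, and inside a semiring when $S$ is archimedean). Now apply $\varphi$ to the identity $f=\sum_i b_is_i$, using $\phi$-linearity: $\varphi(f)=\sum_i\varphi(b_is_i)=\sum_i\phi(b_i)\varphi(s_i)$ — wait, one needs $\varphi(b_i s_i)=\phi(b_i)\varphi(s_i)$, which holds if $s_i\in I$; since $s_i$ need not lie in $I$, instead write $b_is_i = b_i\cdot s_i$ with $b_is_i\in$ something in $I$ and use $\varphi(a b) = \phi(a)\varphi(b)$ with $b = s_i \cdot(\text{an element of }I)$. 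Cleaner: since $f=\sum b_i s_i\in I$ and each $s_i\in S$, one has $\varphi(f)=\varphi(\sum b_is_i)$, and by $\phi$-linearity applied with the ring element $b_i$ acting on $s_i\cdot(\cdot)$ one gets $\varphi(f)=\sum_i\phi(b_i)\psi_i$ where $\psi_i\ge0$ comes from $s_i\in S$, $M\cap I$ being an $S$-pseudomodule, and $\varphi|_{M\cap I}\ge0$. Since $\phi(b_i)=b_i(\phi)>0$ by hypothesis ($\phi\in Z(f)\cap X(M)$) and not all $\psi_i$ vanish (else $\varphi(f)=0$, contradicting $\varphi(f)=1$), we conclude $\varphi(f)>0$.

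The main obstacle is the bookkeeping in case (II): one must legitimately ``factor'' $\varphi$ applied to each product $b_is_i$ through the associated ring homomorphism $\phi$, even though $s_i$ itself lies in $S$ rather than in the ideal $I$. The right way is to observe that $f=\sum_i b_i s_i\in I=M\cap I - (M\cap I)$ (since $\supp(M+Af)\subseteq M-M$ by Proposition~\ref{oprop}(a)), write each contribution as $\phi(b_i)$ times a nonnegative quantity arising from the $S$-module structure, and invoke that $\varphi$ is a state. Once the factorization is in place, positivity of $\varphi(f)$ is forced by the strict positivity of the $b_i$ on $Z(f)\cap X(M)$ together with the fact that a pure state cannot be identically zero on $M\cap I$ (it satisfies $\varphi(f)=1$). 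Then Theorem~\ref{goodhandl} (via Corollary~\ref{moral}) yields $nf\in M$ for some $n\in\N$, completing the proof.
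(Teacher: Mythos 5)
Your choice of ideal is where the argument breaks down, in two ways. First, taking $I:=\supp(M+Af)$ with $u:=f$ via Corollary \ref{zeugen} is circular: an order unit of $(I,M\cap I)$ must in particular lie in $M\cap I$, and \ref{zeugen} explicitly assumes $f\in M$ --- which is precisely the conclusion to be proved (the hypotheses only give $f\ge0$ on $X(M)$). Moreover, \ref{zeugen} requires $S$ archimedean; in the preordering case, where only $M$ is assumed archimedean, the ideal $\supp(M+Af)$ can fail to have any order unit even when $f\in M$ (Remark \ref{qmkeinezeugen}, Example \ref{nexoe}), so this ideal is not a usable choice there. Second, even granting the setup, your case (II) computation does not go through: the multiplicative law \eqref{multregel} gives $\varphi(ab)=\phi(a)\varphi(b)$ only for $b\in I$, and with your ideal there is no reason that the $s_i$, or even the products $b_is_i$, lie in $I$; hence neither the splitting $\varphi\bigl(\sum_ib_is_i\bigr)=\sum_i\varphi(b_is_i)$ nor the factorization $\varphi(b_is_i)=\phi(b_i)\varphi(s_i)$ is justified. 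The phrase ``$\phi$-linearity applied with the ring element $b_i$ acting on $s_i\cdot(\cdot)$'' does not correspond to any available identity; you correctly sense the obstacle, but the proposed workaround is not an argument.

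The missing idea is to let the given decomposition of $f$ dictate the ideal: take $I:=(s_1,\dots,s_r)$ and $u:=s_1+\cdots+s_r$. Since $M$ is archimedean, $1$ is an order unit of $(A,M)$, and since $s_i\in S$, Proposition \ref{oekrit}(2) makes $u$ an order unit of $(I,M\cap I)$ --- this is exactly where the archimedean hypothesis on $M$ enters, with no circularity and no need for $S$ to be archimedean. Now each $s_i$ lies in $M\cap I$ (as $1\in M$), so for a pure state $\varphi$ with associated homomorphism $\phi$ one legitimately gets $\varphi(f)=\sum_i\phi(b_i)\varphi(s_i)$, with $\varphi(s_i)\ge0$ and $\sum_i\varphi(s_i)=\varphi(u)=1$, hence $\varphi(s_i)>0$ for some $i$; also $uM\subset M$ forces $\phi\in X(M)$ by Corollary \ref{dicho}. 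In case (I), $\varphi(f)=\phi(f)/\phi(u)\ge0$, and $\varphi(f)=0$ would put $\phi\in Z(f)\cap X(M)$, whence $\phi(b_i)>0$ for all $i$ and the displayed formula forces $\varphi(f)>0$, a contradiction. In case (II), $\phi\in X(M+I)\subset X(M+Af)=Z(f)\cap X(M)$, and the same formula gives $\varphi(f)>0$ directly. Theorem \ref{goodhandl} then yields $nf\in M$.
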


The first version of Thm.\ \ref{sumbiti} was given in \cite{Sch:MZ}
Prop.\ 2.5. Later it was generalized substantially in \cite{Sw:cert}
Thm.~2. The statement of Thm.\ \ref{sumbiti} above is still stronger
than the version in \cite{Sw:cert}, at least essentially so, since
the latter covered only the case $M=S$. (The slightly stronger
conclusion $f\in S$, instead of $nf\in S$ for some $n\in\N$, was
achieved in \cite{Sw:cert} under the assumption $\frac1q\in S$ for
some integer $q>1$. It seems that this cannot be proved with the pure
states method alone. Of course there is no difference when we assume
$\Q\subset A$ and $\Q_\plus\subset S$.)

Here is an easy proof of Thm.\ \ref{sumbiti} using pure states:

\begin{proof}
Consider the ideal $I:=(s_1,\dots,s_r)$ in $A$. Then $u:=s_1+\cdots+
s_r$ is an order unit of $(I,M\cap I)$ by Prop.\ \ref{oekrit}\,(2).
Let $\varphi$ be any pure state of $(I,M\cap I,u)$, and let $\phi
\colon A\to\R$ be the associated ring homomorphism (\ref{dicho}).
Clearly $uM\subset M$, which
implies $\phi\in X(M)$ (Cor.\ \ref{dicho}). We have
$\varphi(s_i)\ge0$ for $i=1,\dots,r$ and $\varphi(s_i)>0$ for at
least one~$i$ since $\sum_i\varphi(s_i)=1$. By \ref{goodhandl} it
suffices to show $\varphi(f)>0$.

First assume that $\varphi$ is of type (I) (see \ref{dicho}), so
$\varphi(f)=\frac{\phi(f)}{\phi(u)}$ with $\phi(u)\ne0$. Note that
$\phi\in X(M)$ implies $\phi(u)>0$. Also, since $f\ge0$ on $X(M)$, it
implies $\phi(f)\ge0$, whence $\varphi(f)\ge0$. Assuming $\varphi(f)
=0$ would give $\phi\in Z(f)\cap X(M)$, hence $\phi(b_i)>0$ ($i=1,
\dots,r$) by hypothesis. This would lead to a contradiction since
$\varphi(f)=\sum_i\phi(b_i)\varphi(s_i)$. So $\varphi(f)>0$ holds in
case~(I).

When $\varphi$ is of type (II) then $\phi\in X(M+I)\subset X(M+Af)=
Z(f)\cap X(M)$. So again $\phi(b_i)>0$ for $i=1,\dots,r$, and
$\varphi(f)=\sum_i\phi(b_i)\varphi(s_i)$ implies $\varphi(f)>0$.
\end{proof}

In \cite{Sch:surf} Thm 2.8, a local-global criterion was stated for
membership in a module $M$ over an archimedean preordering, in which
the local conditions referred to the ``localizations'' of $M$ with
respect to the maximal ideals of $A$. This criterion has turned out
to be quite powerful, cf.\ the applications mentioned in
\emph{loc.\,cit.}.

Using pure states it is easy to reprove this criterion, and in fact
to strengthen it further:

\begin{thm}\label{lgcrit}
Let $S$ be an archimedean semiring and $M$ an $S$-module in $A$. Let
$f\in A$. For every maximal ideal $\m$ of $A$, assume that there
exists $s\in S$ with $s\notin\m$ and $sf\in M$. Then $nf\in M$ for
some $n\in\N$.
\end{thm}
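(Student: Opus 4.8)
The plan is to apply the Goodearl–Handelman criterion \ref{goodhandl} to the pair $(A,M,1)$ — note that $M$ is archimedean, so $1$ is an order unit of $(A,M)$, and it suffices to show $\varphi(f)>0$ for every pure state $\varphi$ of $(A,M,1)$. Since $S$ is an archimedean semiring and $M$ is an $S$-module, Corollary \ref{koreigreinzust} tells us that every such pure state $\varphi$ is in fact a ring homomorphism $A\to\R$, i.e.\ $\varphi\in X(M)$. So the whole problem reduces to proving: for each $\varphi\in X(M)$ that is a pure state of $(A,M,1)$, one has $\varphi(f)>0$.

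The key step is to locate, for a given such $\varphi$, the maximal ideal to which the hypothesis should be applied. Consider $\m:=\ker(\varphi)$; since $\varphi$ is a ring homomorphism onto $\R$ (a field), $\m$ is a maximal ideal of $A$. By hypothesis there is $s\in S$ with $s\notin\m$ and $sf\in M$. Because $s\notin\m=\ker(\varphi)$ we have $\varphi(s)\ne0$, and since $s\in S\subset M$ and $\varphi\in X(M)$ we get $\varphi(s)>0$. On the other hand $sf\in M$ and $\varphi|_M\ge0$ give $\varphi(sf)\ge0$, i.e.\ $\varphi(s)\varphi(f)\ge0$ (using that $\varphi$ is multiplicative), hence $\varphi(f)\ge0$. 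To upgrade this to a strict inequality, suppose $\varphi(f)=0$. Then $\varphi(sf)=0$ as well; but we already know $sf\in M$, so this alone is not yet a contradiction, and here is where I expect the main obstacle to lie.

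To get the strict inequality I would instead argue as follows. The set of pure states $\varphi$ with $\varphi(f)\le 0$ is, after the reduction above, a closed subset $Z$ of the compact space $X(M)$; if it is empty we are done by \ref{goodhandl}. Otherwise pick $\varphi\in Z$; we have shown $\varphi(f)=0$, so $\varphi\in Z(f)\cap X(M)$, and $\varphi$ corresponds to a maximal ideal $\m_\varphi$ containing $f$. The real content must be to derive a contradiction from the existence of an $s\in S\setminus\m_\varphi$ with $sf\in M$ together with $f\ge 0$ on $X(M)$ — perhaps by applying Theorem \ref{sumbiti} (with $r=1$, $b_1=s$, $s_1=f$, after swapping the roles of the given data, so that $b_1=s>0$ on $Z(f)\cap X(M)$ near $\varphi$) to conclude $nf\in M$ directly, or by using the compactness/covering structure: finitely many of the $s$'s attached to the various maximal ideals cover $X(M)$ in the sense that at each point of $X(M)$ at least one is nonzero, and a suitable sum $s=\sum_j s_j$ then witnesses $sf\in M$ with $s>0$ everywhere on $X(M)$, hence $s$ an order unit, and $f\in O(M,\text{something})$. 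I would aim to phrase the argument so that the hypothesis is applied not to $\ker(\varphi)$ for a single pure state but packaged through \ref{sumbiti}: write the covering, extract $f = \sum_j (s_j f)\cdot(\text{weights})$ if possible, or more cleanly invoke \ref{sumbiti} with the decomposition coming from a partition-of-unity-type identity in $S$. The delicate point — the place I expect to spend the most care — is bridging from the purely pointwise condition "at every maximal ideal some $s\in S$ works" to a single global identity of the form required by \ref{sumbiti} or by \ref{goodhandl}; compactness of $X(M)$ (from archimedeanness) is the tool, but turning a cover into an algebraic identity inside $S$ (rather than just inside $C(X,\R)$) is exactly the kind of step that needs the archimedean Representation Theorem \ref{repth} applied to cleverly chosen strictly positive combinations.
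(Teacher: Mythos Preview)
Your approach via $(A,M,1)$ cannot close the gap you yourself identified, and the obstruction is structural rather than technical. The hypothesis allows $f$ to vanish on $X(M)$: already the trivial instance $f\in M$ with $f$ having zeros on $X(M)$ satisfies the hypothesis (take $s=1$ at every maximal ideal). At such a zero $\varphi\in X(M)$ you will only ever obtain $\varphi(f)=0$, and no argument that stays inside $(A,M,1)$ can upgrade this to $\varphi(f)>0$; so Theorem~\ref{goodhandl} applied to $(A,M,1)$ is simply inconclusive. Your proposed rescue via Theorem~\ref{sumbiti} does not work either: from $\sum a_is_i=1$ you get $f=\sum(a_if)s_i$, but the coefficients $b_i=a_if$ vanish identically on $Z(f)$, so the strict positivity hypothesis in \ref{sumbiti} fails; and swapping roles to put $s_if\in M$ into the ``$s_i$-slot'' is impossible because \ref{sumbiti} requires $s_i\in S$, not merely $s_i\in M$.

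The paper's proof resolves this by abandoning $(A,M,1)$ and instead constructing a finitely generated ideal $J\ni f$ together with an order unit $u$ of $(J,\,M\cap J)$ that is an $A$-multiple of $f$, say $u=af$. The construction does use the covering idea you sketched: the local witnesses give $s_if\in M\cap I$ with $I=\supp(M+Af)$, hence $f$ lies in the ideal generated by $M\cap I$; pick finitely many $x_1,\dots,x_r\in M\cap I$ with $f\in(x_1,\dots,x_r)$, adjoin $y_i\in M\cap I$ with $x_i+y_i\in Af$, and set $J=(x_1,\dots,x_r,y_1,\dots,y_r)$ and $u=\sum_i(x_i+y_i)\in Af$. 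For a pure state $\varphi$ of $(J,\,M\cap J,\,u)$ with associated ring homomorphism $\phi$ (Cor.~\ref{dicho}), the identity $1=\varphi(u)=\varphi(af)=\phi(a)\varphi(f)$ forces $\varphi(f)\ne0$ for free; your own computation (choose $s\in S$ with $\phi(s)>0$ and $sf\in M$, so $0\le\varphi(sf)=\phi(s)\varphi(f)$) then yields $\varphi(f)>0$. The missing idea in your proposal is precisely this passage to an ideal in which the order unit can be arranged to lie in $Af$.
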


\begin{proof}
Let $I:=\supp(M+Af)$, and let $J'$ be the ideal generated by $M\cap
I$. For every maximal ideal $\m$ of $A$ there exists $s\in S$,
$s\notin\m$, with $sf\in M$, and hence $sf\in J'$.
This shows $f\in J'$. (The
argument is classical, we repeat it for the readers's convenience:
Choose finitely many $s_i\in S$ with $(s_1,\dots,s_r)=(1)$ and with
$s_if\in J'$ ($i=1,\dots,r$), then multiply an equation $\sum_ia_is_i
=1$ with $f$ to see $f\in J'$.) Hence there are finitely many
elements $x_1,\dots,x_m\in M\cap I$ with $f\in(x_1,\dots,x_m)$. Since
$I=\supp(M+Af)$, there are $y_i\in M\cap I$ with $x_i+y_i\in Af$ ($i=
1,\dots,r$). Let $J:=(x_1,\dots,x_r,y_1,\dots,y_r)$. Then $f\in J$,
and $u:=\sum_i(x_i+y_i)$ is an order unit of $(J,M\cap J)$ by
\ref{oekrit}(1). Note that $u=af$ for some $a\in A$.

Let $\varphi$ be a pure state of $(J,M\cap J,u)$, we are going to
show $\varphi(f)>0$. Let $\phi$ be the associated ring homomorphism,
so $\phi\in X(S)$ (Cor.\ \ref{dicho}). From $1=\varphi(af)=\phi(a)
\varphi(f)$ we get $\varphi(f)\ne0$. On the other hand, there exists
$s\in S$ with $\phi(s)\ne0$ (hence $\phi(s)>0$) and $sf\in M$.
So $0\le\varphi(sf)=\phi(s)\varphi
(f)$ shows $\varphi(f)\ge0$.
Altogether we get $\varphi(f)>0$, and the proof is once more
completed by an application of Theorem \ref{goodhandl}.
\end{proof}

\begin{rem}
When $M$ is a quadratic module (so we can assume that $S$ is a
preordering), the local condition is needed only for the maximal
ideals $\m\supset\supp(M)$. (If there is $a\in\supp(M)$ with
$a\notin\m$, then $af\in\supp(M)\subset M$.) For such $\m$, the
condition simply says $f\in M_\m$, where $M_\m$ is the quadratic
module generated by $M$ in $A_\m$.

When $\frac12\in A$ and $M=S$ is a preordering, and if we assume
$f\ge0$ on $X(S)$, the local condition is only needed for $\m\supset
I=\supp(S+Af)$. (The brief argument is given in the proof of
\cite{Sch:surf} Cor.\ 2.10.)
\end{rem}


\section{More applications}

We demonstrate now that the technique of pure states allows to
establish archimedean Stellens\"atze that are completely new. Given
a compact basic closed set $K\subset\R^n$ and a polynomial $f\in
\R[\x]$ with $f|_K\ge0$, all known results on denominator-free
representations of $f$ require (essentially) that the zero set of
$f$ in $K$ is discrete, i.\,e., finite. In contrast, this zero set
can be of arbitrary dimension in the two main results of this
sections, Theorems \ref{glatterandpkte} and \ref{innpkte} (see also
Thm.\ \ref{polytop}).

\begin{prop}\label{3nothinallg}
Assume $\Q\subset A$.
Let $M$ be a module over an archimedean preordering $S$ in $A$, let
$f\in A$ with $f\ge0$ on $X(M)$, and put $I:=\supp(M+Af)$ (an ideal
of $A$). Consider the following conditions:
\begin{itemize}
\item[(i)]
$f\in M$;
\item[(ii)]
$f$ lies in the ideal of $A$ generated by $M\cap I$, and for every
$\phi\in X(S+I)$ and every $\phi$-linear map $\varphi\colon I\to\R$
with $\varphi|_{M\cap I}\ge0$ one has $\varphi(f)\ge0$.
\end{itemize}
Then (ii) implies (i) if the ideal $I$ is finitely generated. The
converse (i) $\To$ (ii) holds unconditionally.
\end{prop}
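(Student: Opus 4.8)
The implication (i) $\To$ (ii) is the easy part and should be dispatched first. If $f\in M$, then since $f\in I=\supp(M+Af)$ and $M\cap I$ generates an ideal containing $f$ (indeed $f\in M\cap I$), the membership claim is trivial; and for the inequality, note that any $\phi$-linear $\varphi\colon I\to\R$ with $\varphi|_{M\cap I}\ge0$ is in particular a state of $(I,M\cap I)$, so $\varphi(f)\ge0$ because $f\in M\cap I$. No finiteness hypothesis is needed here. The main content is the converse (ii) $\To$ (i) under the assumption that $I$ is finitely generated, and here the natural route is to invoke Corollary \ref{moral} (or directly Theorem \ref{goodhandl}): it suffices to produce an order unit $u$ of $(I,M\cap I)$ and show that every pure state $\varphi$ of $(I,M\cap I,u)$ satisfies $\varphi(f)>0$, whence $nf\in M$ for some $n$, and then $f\in M$ because $\Q\subset A$ forces $\tfrac1n f\in M$ (using $\Q_\plus S\subset S$, which we may assume per \ref{xplgen}).

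So the first real step is to build the order unit. Since $f$ lies in the ideal generated by $M\cap I$, we can write $f=\sum_i a_i x_i$ with $x_i\in M\cap I$; because $I$ is finitely generated and $I=\supp(M+Af)$, we can (as in the proof of Theorem \ref{lgcrit}) enlarge this finite list by elements $y_i\in M\cap I$ with $x_i+y_i\in Af$, and then set $J:=(x_1,\dots,x_r,y_1,\dots,y_r)$, so that $f\in J\subset I$ and, by Proposition \ref{oekrit}(1) (whose hypothesis $x_i,y_i\in M$ is met, with order unit $1$ of $(A,S)$ since $S$ is archimedean), $u:=\sum_i(x_i+y_i)$ is an order unit of $(J,M\cap J)$. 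Crucially $u=af$ for some $a\in A$. One should check $J=I$, or at least that working with $J$ in place of $I$ costs nothing: since $Af\subset J$ (from $x_i+y_i\in Af$ we get $af = \sum(x_i+y_i)\in J$, and $f\in J$ directly), and $J\subset I$, and $\supp(M+Af)=I$, we in fact have $J=I$ once we note $I=\supp(M+Af)\subset\supp(M+J)\subset I$; alternatively just run the Goodearl-Handelman argument on $(J,M\cap J,u)$ directly, which is cleaner.

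The decisive step is then the pure-state analysis. Let $\varphi$ be a pure state of $(J,M\cap J,u)$ and $\phi$ its associated ring homomorphism, which exists by the Dichotomy Corollary \ref{dicho} since $S$ is a preordering; moreover $\phi\in X(S)$, indeed $\phi\in X(M)$ by Remark \ref{phiuposneg} because $M\cap J = N\cap J$-type reasoning applies (or more simply $u^2\in M$ need not hold, so instead argue via $T$). From $u=af$ and $\varphi(u)=1$ we get $\phi(a)\varphi(f)=1$, hence $\varphi(f)\ne0$. In case (I) of \ref{dicho}, $\varphi=\tfrac1{\phi(u)}\phi|_J$ with $\phi\in X(M)$, so $\phi(f)\ge0$ (as $f\ge0$ on $X(M)$) and $\phi(u)>0$ (as $u\in M$), giving $\varphi(f)\ge0$, hence $\varphi(f)>0$. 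In case (II), $\phi\in X(S+J)$; one must verify $\phi\in X(S+I)$ and that $\varphi$ extends to a $\phi$-linear map on $I$ with $\varphi|_{M\cap I}\ge0$ — this is where hypothesis (ii) is consumed, yielding $\varphi(f)\ge0$, again upgraded to $\varphi(f)>0$ by $\varphi(f)\ne0$. The main obstacle is precisely this last bookkeeping in case (II): matching the pure state of $(J,M\cap J,u)$ (defined only on $J$) with the hypothesis in (ii), which speaks of $\phi$-linear maps on $I=\supp(M+Af)$; the cleanest fix is to observe $J=I$ as above, so $\varphi$ is literally a $\phi$-linear state of $(I,M\cap I)$ and (ii) applies verbatim. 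Once $\varphi(f)>0$ for all pure states, Theorem \ref{goodhandl} finishes.
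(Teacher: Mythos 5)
Your direction (i) $\To$ (ii) is fine, but the main implication has a genuine gap at exactly the point you flag as "bookkeeping". Because you anchor the elements $x_i$ to one particular representation $f=\sum_i a_ix_i$, the ideal $J=(x_1,\dots,x_r,y_1,\dots,y_r)$ you construct need not be all of $I$, and your argument that $J=I$ is a non sequitur: the chain $I=\supp(M+Af)\subset\supp(M+J)\subset I$ shows at best $\supp(M+J)=I$, which says nothing about $J$ itself, and in general $J$ is a proper subideal of $I$. This is fatal in case (II): a type (II) pure state of $(J,M\cap J,u)$ comes with $\phi\in X(S+J)$ and a $\phi$-linear functional defined only on $J$, whereas hypothesis (ii) only controls $\phi$-linear maps on all of $I$ with $\phi\in X(S+I)$ and nonnegativity on $M\cap I$; note $\phi(J)=0$ does not give $\phi(I)=0$, and there is no evident positivity-preserving $\phi$-linear extension of $\varphi$ from $J$ to $I$. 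So the "cleaner alternative" of running Goodearl--Handelman on $(J,M\cap J,u)$ does not close the argument either.

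The repair is to choose the $x_i$ so that they generate all of $I$, and this is precisely where the finite-generation hypothesis (otherwise unused in your argument) enters. First observe $I=(M\cap I)+Af$: if $g\in I$, write $g=m+af$ with $m\in M$, $a\in A$; then $m=g-af\in I$ because $f\in I$, so $m\in M\cap I$. Hence, with $J'$ the ideal generated by $M\cap I$, one has $I=J'+Af$, and the assumption $f\in J'$ from (ii) forces $J'=I$. Since $I$ is finitely generated and is generated by $M\cap I$, one can pick finitely many generators $x_1,\dots,x_r$ of the whole ideal $I$ lying in $M\cap I$; choosing the $y_i\in M\cap I$ with $x_i+y_i\in Af$ as you did, $u:=\sum_i(x_i+y_i)\in Af$ is an order unit of $(I,M\cap I)$ by Proposition \ref{oekrit}(1), and now every type (II) pure state of $(I,M\cap I,u)$ is literally an instance of hypothesis (ii). Your case (I) discussion is essentially correct but should be made precise rather than deferred to Remark \ref{phiuposneg}: $u\in M$ and $u^2\in\Sigma A^2\subset S$, so $u^2x\in M\cap I$ for every $x\in M$, whence $\varphi(u^2)=\phi(u)>0$ and $0\le\varphi(u^2x)=\phi(u)\phi(x)$ give $\phi\in X(M)$; then $\phi(f)\ge0$, and $\varphi(f)\ne0$ (from $u\in Af$) yields $\varphi(f)>0$, so Theorem \ref{goodhandl} applies.
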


\emph{Remark:}
``$\varphi(f)\ge0$'' at the end of condition (ii) is not a misprint.
However, (i) implies in fact $\varphi(f)>0$ whenever $\varphi$ is
nonzero.

\begin{proof}
$I$ is an ideal of $A$ since $SI\subset I$ and $S+\Z=A$. The
implication (i) $\To$ (ii) is trivial. We remark that $\varphi(f)>0$
holds in (ii) whenever $\varphi\ne0$. Indeed, $f$ is an order unit of
$(I,M\cap I)$ according to Cor.\ \ref{zeugen}.

Conversely assume that (ii) holds and $I$ is finitely generated. Let
$J$ be the ideal generated by $M\cap I$. Since $I=(M\cap I)+Af$,
it is clear that $I=J+Af$. So $f\in J$ implies $J=I$. Choose
generators $x_1,\dots,x_r\in M$ of $I$. There are elements
$y_i\in M\cap I$ with $x_i+y_i\in Af$ ($i=1,\dots,r$). The element
$u:=\sum_i(x_i+y_i)$ lies in $Af$, and is an order unit of $(I,M\cap
I)$ by \ref{oekrit}. Applying \ref{goodhandl}, we have to show
$\varphi(f)>0$ for every pure state $\varphi$ of $(I,M\cap I,u)$.

Given such $\varphi$, let $\phi\in X(S)$ be the associated ring
homomorphism (Cor.\ \ref{dicho}). From $u\in Af$ we see that $\varphi
(f)\ne0$.
If $\varphi$ is of type (II) then $\varphi(f)\ge0$ by the hypothesis.
Assume that $\varphi$ is of type (I), i.\,e., $\phi(u)\ne0$. From $u^2
\in M\cap I$ and $\varphi(u^2)=\phi(u)$ we see $\phi(u)>0$. For any
$x\in M$ we have $u^2x\in M\cap I$, therefore $0\le\varphi(u^2x)=
\phi(u)\phi(x)$, which implies $\phi(x)\ge0$. Hence $\phi\in X(M)$,
and so $\phi(f)\ge0$ follows from the hypothesis.
\end{proof}

\begin{rem}
At first sight it is surprising that $\varphi(f)\ge0$ in (ii) should
suffice (instead of $\varphi(f)>0$). The subtlety, however, lies in
the ideal $I$ and in the condition that $f$ should lie in the ideal
generated by $M\cap I$. In concrete situations it is often hard to
decide whether this is true. Even when $S$ is a preordering in
$\R[x_1,\dots,x_n]$ given by finitely many explicit generators,
there seems no general procedure known to produce generators for the
support ideal $\supp(S)$. For these reasons, Prop.\ \ref{3nothinallg}
seems to be mainly of theoretical interest.
\end{rem}

\begin{prop}\label{allgemkrit}
Let $A$ be an $\R$-algebra, let $S\subset A$ be a semiring and
$M\subset A$ an archimedean $S$-module. Assume that $S$ is either
archimedean or a preordering. Let $f\in A$ with $f\ge0$ on $X(M)$.
Assume there are $g_1,\dots,g_r\in S$ that vanish identically on
$Z(f)\cap X(M)$, such that the following two conditions are
satisfied:
\begin{itemize}
\item[(1)]
$f\in I:=(g_1,\dots,g_r)$;
\item[(2)]
for every $\phi\in Z(f)\cap X(M)$, the residue class $\bar f$ lies
in the interior of the cone $\R_\plus\bar g_1+\dots+\R_\plus\bar g_r
\subset I/\m_\phi I$, where $\m_\phi:=\ker(\phi)$.
\end{itemize}
Then $f\in M$.
\end{prop}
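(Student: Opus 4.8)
The plan is to apply Corollary \ref{moral} (or directly Theorem \ref{goodhandl}) with the ideal $I=(g_1,\dots,g_r)$, so the task splits into two parts: first, checking that $(I,M\cap I)$ has an order unit $u$; second, verifying that $\varphi(f)>0$ for every pure state $\varphi$ of $(I,M\cap I,u)$. For the first part, since $g_1,\dots,g_r\in S$ and $M$ is archimedean, so that $(A,M)$ has the order unit $1$, Proposition \ref{oekrit}(2) applies and shows that $u:=g_1+\cdots+g_r$ is an order unit of $(I,M\cap I)$. (Note $uM\subset M$ since $u\in S$.)

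For the second part, fix a pure state $\varphi$ of $(I,M\cap I,u)$ and let $\phi\colon A\to\R$ be the associated ring homomorphism, furnished by the Dichotomy Corollary \ref{dicho}; since $uM\subset M$ we have $\phi\in X(M)$. I would first argue that $\phi\in Z(f)\cap X(M)$: in case (I) we have $\phi(u)\ne0$ and $\varphi=\frac{1}{\phi(u)}\phi|_I$, and if $\varphi(f)=0$ then $\phi(f)=0$, so $\phi\in Z(f)$; in case (II) we have $\phi\in X(S+I)\subset X(I)$, so in particular $\phi(f)=0$ since $f\in I$, again giving $\phi\in Z(f)$. In either case, suppose toward a contradiction that $\varphi(f)\le0$ (hence $=0$, since $\varphi|_{M\cap I}\ge0$ and $f\in M\cap I$... actually $f\in I$ and $f\ge0$ on $X(M)$ only gives $\varphi(f)\ge0$ is what we want, so I argue directly). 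The key computation: write $f=\sum_{i=1}^r a_i g_i$ with $a_i\in A$; then in $I/\m_\phi I$ we have $\bar f=\sum_i \phi(a_i)\bar g_i$, and $\varphi$ factors through $I/\m_\phi I$ (because $\varphi$ is $\phi$-linear, $\varphi(\m_\phi I)=\phi(\m_\phi)\varphi(I)=0$), inducing a linear functional $\bar\varphi$ on $I/\m_\phi I$ with $\bar\varphi(\bar g_i)=\varphi(g_i)\ge0$ for all $i$ and $\bar\varphi(\bar g_i)>0$ for at least one $i$ (since $\sum_i\varphi(g_i)=\varphi(u)=1$). Now condition (2) says $\bar f$ lies in the \emph{interior} of the cone $\R_\plus\bar g_1+\cdots+\R_\plus\bar g_r$; I expect the main obstacle to be the clean extraction of the inequality $\bar\varphi(\bar f)>0$ from "interior point'' plus "$\bar\varphi\ge0$ on the generators and $\ne 0$''. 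The point is that an interior point $\bar f$ of the cone has the property that every nonzero linear functional nonnegative on the cone is strictly positive at $\bar f$ (this is exactly the order-unit property / the separating-hyperplane characterization of interior points of a finite-dimensional cone); since $\bar\varphi$ is nonnegative on the generators $\bar g_i$, it is nonnegative on the whole cone, and it is nonzero on the cone (as $\bar\varphi(\bar g_i)>0$ for some $i$), so $\bar\varphi(\bar f)>0$, i.e.\ $\varphi(f)>0$.

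Having shown $\varphi(f)>0$ for every pure state, Theorem \ref{goodhandl} gives $nf\in M$ for some $n\ge1$; and since $M$ is a module over a preordering (or $\Q_\plus\subset S$ in the relevant reduction — here one invokes the standard reduction to $A_\Q$, $M_\Q$ as in \ref{remsgoodhandl} and \ref{xplgen}, under which $nf\in M$ becomes $f\in M$), we conclude $f\in M$. The only genuinely delicate point is the interplay in part two: one must make sure $\bar\varphi$ is genuinely nonzero on the cone — this uses $\varphi(u)=1$ crucially, so that $\bar u=\bar g_1+\cdots+\bar g_r$ has $\bar\varphi(\bar u)=1\ne0$, hence $\bar\varphi(\bar g_i)>0$ for some $i$ — and then that $\bar f$ being interior forces strict positivity. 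Everything else is a routine application of the machinery already assembled in Sections 3–5.
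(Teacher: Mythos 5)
Your plan is essentially the paper's own proof: the order unit $u=g_1+\cdots+g_r$ from Proposition \ref{oekrit}(2), the dichotomy \ref{dicho} with $\phi\in X(M)$ (since $xu\in M\cap I$ for $x\in M$), direct evaluation in case (I), and in case (II) the passage to the finite-dimensional quotient $I/\m_\phi I$, where $\bar\varphi\ge0$ on the cone, $\bar\varphi(\bar u)=1$ makes it nonzero, and interiority of $\bar f$ gives $\varphi(f)=\bar\varphi(\bar f)>0$, followed by Theorem \ref{goodhandl}. The only step to state explicitly is the case (I) subcase $\phi(f)>0$, where hypothesis (2) is not available: there $\varphi(f)=\phi(f)/\phi(u)>0$ because $\phi(u)>0$ (as $u\in S\subset M$ and $\phi\in X(M)$), which is precisely how the paper finishes that case (it rules out $\phi(f)=0$ since $\phi\in Z(f)\cap X(M)$ would force $\phi(g_i)=0$ for all $i$, contradicting $\phi(u)>0$).
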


Note that $I/\m_\phi I$ is an $\R$-vector space of finite dimension,
which explains the meaning of interior in (2). It is clear how to
give a dual formulation of (2) using states.

\begin{proof}
By Prop.\ \ref{oekrit}(2), $u:=g_1+\cdots+g_r$ is an order unit of
$(I,M\cap I)$.
Note $u\in S$. Let $\varphi\colon I\to\R$ be a pure state of $(I,
M\cap I,u)$. We shall show $\varphi(f)>0$, which implies $f\in M$ by
Thm.\ \ref{goodhandl}. Let $\phi\in X(S)$ be the ring homomorphism
associated to $\varphi$. For every $x\in M$ we have $xu\in M\cap I$,
and so $0\le\varphi(xu)=\phi(x)$.
This shows $\phi\in X(M)$, and so $\phi(f)\ge0$ by hypothesis.
Moreover, there are two possibilities (Cor.\ \ref{dicho}):

1.\
If $\varphi$ is of type (I) then $\phi(u)\ne0$, and hence $\phi(u)
>0$ since $u\in S$. Assuming $\phi(f)=0$ would mean $\phi\in Z(f)
\cap X(M)$. This would imply $\phi(g_i)=0$ for all $i$, contradicting
$\phi(u)>0$. So $\phi(f)>0$, and hence $\varphi(f)=\frac{\phi(f)}
{\phi(u)}>0$.

2.\
If $\varphi$ is of type (II) then $\phi\in Z(f)\cap X(M)$.
The map
$\varphi$ is induced by a $\phi$-linear map $\bar\varphi\colon I/
\m_\phi I\to\R$ satisfying $\bar\varphi(\ol{M\cap I})\ge0$. In
particular, $\bar\varphi\ge0$ on the cone $\R_\plus g_1+\dots+
\R_\plus g_r$. Since $\bar f$ lies in the interior of this cone by
assumption (2),
we again get $\varphi(f)=\bar\varphi(\bar f)>0$.
\end{proof}

\begin{rems}\label{remsallgemkrit}
\hfil

1.\
Given $g_1,\dots,g_r\in S$ that vanish on $Z(f)\cap X(M)$, conditions
(1) and (2) in Prop.\ \ref{allgemkrit} can be effectively checked,
for example when $A$ is a polynomial ring over $\R$.
\smallbreak

2.\
In Prop.\ \ref{allgemkrit}, assume that $S$ is an archimedean
semiring and $M=S$. Then the sufficient conditions of
\ref{allgemkrit} are also necessary for $f\in S$, in the sense that
$f\in S$ implies the existence of $g_1,\dots,g_r\in S$ satisfying (1)
and (2). (One can simply take $r=1$ and $g_1=f$.)
\smallbreak

3.\
Assume we are given $S$, $M$ and $f$ as in \ref{allgemkrit}, with
$f\ge0$ on $X(M)$, and we want to prove $f\in M$ using this theorem.
In general, it is a subtle task to find a suitable ideal $I$ as in
this theorem (together with its generators), since conditions (1) and
(2) tend to work against each other: (1) asks for $I$ being large,
(2) asks for $I$ being small.
\end{rems}

Using the abstract criteria established so far, we shall now obtain
applications in geometric situations that are more concrete. In doing
so, the question arises how interpret conditions like
\ref{allgemkrit}(2) in a geometric way. Under suitable regularity
assumptions, this turns out to be possible.

First, we need the following lemma:

\begin{lem}\label{imimm2}
Let $(A,\m)$ be a regular local ring, and let $I\ne(1)$ be an ideal.
If $A/I$ is regular then for any $n\ge1$ the map
$$I^n/\m I^n\to\m^n/\m^{n+1}$$
induced by $I^n\subset\m^n$ is injective. Conversely, if this map is
injective for $n=1$, then $A/I$ is regular.
\end{lem}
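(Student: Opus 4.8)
\textbf{Proof plan for Lemma \ref{imimm2}.}
The plan is to work with associated graded rings. Write $R := A$ with maximal ideal $\m$, and recall that since $(R,\m)$ is regular of dimension $d$, the associated graded ring $\mathrm{gr}_\m(R) = \bigoplus_{n\ge0}\m^n/\m^{n+1}$ is a polynomial ring over the field $k := R/\m$ in $d$ variables; in particular it is a domain. The key structural input is that for the quotient $\bar R := R/I$, one has $\mathrm{gr}_{\bar\m}(\bar R) = \mathrm{gr}_\m(R)/I^*$, where $I^* \subset \mathrm{gr}_\m(R)$ is the homogeneous ideal of initial forms of elements of $I$. The statement ``$R/I$ is regular'' translates, via this identification, into ``$I^*$ is generated by a regular sequence of linear forms,'' equivalently $\mathrm{gr}_\m(R)/I^*$ is again a polynomial ring; I would isolate this as the main technical point and prove it from the definition of regularity (choosing a regular system of parameters for $R$ adapted to a minimal generating set of $I$, using that $I \ne (1)$ so $I \subset \m$).

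Granting that, the forward direction goes as follows. The map $I^n/\m I^n \to \m^n/\m^{n+1}$ in question is exactly the degree-$n$ piece of the natural surjection from $\bigoplus_n I^n/\m I^n$ (the fiber of the Rees algebra, i.e.\ $\mathrm{gr}_I$-type object in degree matching) onto the homogeneous ideal $\bigoplus_n (\m^n \cap I^{(n)})\!\!\pmod{\m^{n+1}}$ generated by $I^*$ in $\mathrm{gr}_\m(R)$; more concretely, I would argue directly: choose $f_1,\dots,f_r \in I$ whose initial forms $\ell_1,\dots,\ell_r$ are linearly independent linear forms in $\m/\m^2$ and which generate $I$ (possible when $R/I$ is regular). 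Then $I^n$ is generated by the monomials $f_1^{a_1}\cdots f_r^{a_r}$ of total degree $n$, and the claim becomes that their initial forms $\ell_1^{a_1}\cdots\ell_r^{a_r}$ are linearly independent in $\m^n/\m^{n+1}$. But these are distinct monomials of degree $n$ in part of a polynomial generating set of $\mathrm{gr}_\m(R)$, hence linearly independent over $k$; moreover any $k$-linear relation among the classes of the $f_1^{a_1}\cdots f_r^{a_r}$ modulo $\m^{n+1}$ would produce, after lifting and collecting lowest-degree terms, such a relation among the $\ell$-monomials, a contradiction. This shows injectivity for all $n\ge1$. The converse is the easy direction: injectivity for $n=1$ says the composite $I/\m I \hookrightarrow \m/\m^2$, so $\dim_k(I/\m I) = \dim_k(\text{image of }I\text{ in }\m/\m^2)$, whence a minimal generating set of $I$ maps to $k$-linearly independent elements of $\m/\m^2$; these extend to a regular system of parameters of $R$, and killing part of a regular system of parameters of a regular local ring yields a regular local ring, so $R/I$ is regular.

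The main obstacle I anticipate is making the ``initial form'' bookkeeping in the forward direction rigorous without circularity: one must know, \emph{before} invoking regularity of $R/I$ in the strong graded form, only that the chosen generators of $I$ have linearly independent linear initial forms, and then deduce the injectivity for all $n$ purely inside the polynomial ring $\mathrm{gr}_\m(R)$. A clean way to sidestep the delicate filtration argument is to reduce to $n=1$ geometrically: complete $R$ so that $\hat R \cong k[[t_1,\dots,t_d]]$ (or use the graded ring directly), arrange $I = (t_1,\dots,t_r)$ after a change of regular system of parameters — which is exactly what regularity of $R/I$ buys — and then $I^n = (t_1,\dots,t_r)^n$, for which $I^n/\m I^n \to \m^n/\m^{n+1}$ is visibly the inclusion of the span of degree-$n$ monomials divisible by some $t_i$, $i\le r$, hence injective. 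With $I$ put in this normal form, both directions become essentially formal, and the only real content is the standard fact that $A/I$ regular with $I\subset\m$ forces $I$ to be generated by part of a regular system of parameters.
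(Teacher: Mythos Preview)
Your proposal is correct and follows essentially the same route as the paper's proof: both arguments reduce to the standard fact that $A/I$ is regular if and only if $I$ is generated by part of a regular system of parameters $(x_1,\dots,x_r)$ (the paper cites Matsumura, Thm.~14.2), and then deduce injectivity for all $n$ from the linear independence of the degree-$n$ monomials $x_1^{a_1}\cdots x_r^{a_r}$ in $\m^n/\m^{n+1}$, which holds because $\mathrm{gr}_\m(A)$ is a polynomial ring (Matsumura, Thm.~14.4). Your detour through initial forms and completion is unnecessary scaffolding, but the core argument is identical.
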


\begin{proof}
Injectivity of this map for $n=1$ means that $I$ can be generated by
a subsequence $(x_1,\dots,x_d)$ of a regular parameter system of
$(A,\m)$. It is well known that this is equivalent to $A/I$ being
regular (e.\,g., \cite{Mt} Thm.\ 14.2).
Assuming that this is the case, the ideal $I^n$ is generated by the
monomials $x^\alpha=x_1^{\alpha_1}\cdots x_d^{\alpha_d}$ of degree
$|\alpha|=n$. These are linearly independent in $\m^n/\m^{n+1}$ over
$A/\m$ (\emph{loc.\,cit.}, Thm.\ 14.4), and so the map $I^n/\m I^n\to
\m^n/\m^{n+1}$ is injective as well.
\end{proof}

Here is an application of Prop.\ \ref{allgemkrit} to a geometric
situation. We write $\R[\x]:=\R[x_1,\dots,x_n]$.

\begin{thm}\label{glatterandpkte}
Let $S\subset\R[\x]$ be a semiring and $M$ an archimedean $S$-module.
Assume that $S$ is either archimedean or  a preordering. Let $f\in
\R[\x]$ with $f\ge0$ on $X(M)$, and let $V$ be the (reduced) Zariski
closure of $Z(f)\cap X(M)\subset\R^n$ in $\A^n$. Assume there are
$g_1,\dots,g_r\in S$ vanishing on $Z(f)\cap X(M)$ with
\begin{itemize}
\item[(1)]
$f\in(g_1,\dots,g_r)$;
\item[(2)]
for every $z\in Z(f)\cap X(M)$ and every $v\in\R^n$ with $D_vg_i(z)
\ge0$ ($i=1,\dots,r$) and $v\notin T_z(V)$ we have $D_vf(z)>0$.
\end{itemize}
If moreover every point $z\in Z(f)\cap X(M)$ is a nonsingular point
of $V$, then $f\in M$.
\end{thm}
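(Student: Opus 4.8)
The plan is to deduce Theorem \ref{glatterandpkte} from the more abstract criterion in Proposition \ref{allgemkrit}, which requires verifying its two hypotheses (1) and (2). Hypothesis (1) of \ref{allgemkrit} is literally hypothesis (1) here, so nothing needs to be done there. The real work is to translate the vector-space condition \ref{allgemkrit}(2) — that $\bar f$ lies in the interior of the cone $\R_\plus\bar g_1+\dots+\R_\plus\bar g_r$ in $I/\m_\phi I$ — into the geometric directional-derivative condition (2) of the present theorem, using the nonsingularity assumption on $V$.

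First I would fix a point $z = \phi \in Z(f)\cap X(M)$ with maximal ideal $\m = \m_z \subset \R[\x]$, and identify the finite-dimensional $\R$-vector space $(I/\m I)\du$ with a space of linear functionals. The point is that since $I \subset \m$ (because $f$ and all $g_i$ vanish at $z$, and $I=(g_1,\dots,g_r)$), an $\R$-linear map $I/\m I \to \R$ extends, after choosing a $\phi$-linear splitting, to a derivation-type object; more concretely, a $\phi$-linear functional $\bar\varphi$ on $I$ is determined by its values on the generators, and the composite $I \hookrightarrow \m \to \m/\m^2$ lets us view these functionals as arising from cotangent vectors, i.e. from directional derivatives $D_v$ at $z$ for $v$ in the Zariski tangent space $T_z(\A^n) = \R^n$. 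Here is where Lemma \ref{imimm2} enters: nonsingularity of $V$ at $z$ guarantees that the map $I/\m I \to \m/\m^2$ is injective, so that every $\R$-linear functional on the image is the restriction of some $D_v$, and moreover $D_v$ and $D_{v'}$ induce the same functional on $I/\m I$ precisely when $v - v' \in T_z(V)$. Thus the functionals on $I/\m I$ are exactly the $D_v$ modulo $T_z(V)$, and the cone condition becomes: $D_v g_i(z) \ge 0$ for all $i$, together with $v \notin T_z(V)$, forces $D_v f(z) > 0$ — which is exactly hypothesis (2) of the theorem. One must also check that $D_v f(z) > 0$ is the correct reading of "$\bar f$ in the interior of the cone", namely that a vector lies in the interior of a finitely generated cone iff every linear functional nonnegative on the generators and not vanishing on the whole ambient space (equivalently not vanishing identically, i.e. $v \notin T_z(V)$) is strictly positive on it; this is a standard fact about polyhedral cones spanning their ambient space, and $\R_\plus \bar g_1 + \dots + \R_\plus \bar g_r$ does span $I/\m I$ since the $\bar g_i$ generate it.

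After this translation is set up, I would simply invoke Proposition \ref{allgemkrit}: the semiring $S$ is archimedean or a preordering as assumed, $M$ is an archimedean $S$-module, $f \ge 0$ on $X(M)$, the $g_i \in S$ vanish on $Z(f)\cap X(M)$, condition (1) holds, and condition (2) of \ref{allgemkrit} has just been shown to be equivalent to condition (2) here at each $z$. Hence $f \in M$, which is the conclusion.

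The main obstacle I anticipate is the bookkeeping in the correspondence between $\phi$-linear functionals on $I$ and directional derivatives: one must be careful that $\bar\varphi$ is defined on $I/\m_\phi I$ (not $I/\m_\phi$), handle the passage through $\m/\m^2$ correctly, and verify that the kernel of $D_v \mapsto (\text{functional on } I/\m I)$ is exactly $T_z(V)$ — which is where regularity of $V$ at $z$ (via Lemma \ref{imimm2}) is essential and without which the statement can fail. A secondary point is to confirm that "interior of a polyhedral cone that spans" is characterized by strict positivity of all functionals that are nonnegative on generators and not identically zero; this is elementary but should be stated cleanly since the cone here may be lower-dimensional inside $I/\m_\phi I$ only if the $\bar g_i$ fail to generate, which they do not.
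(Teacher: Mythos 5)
Your overall strategy is the paper's: verify the two hypotheses of Proposition \ref{allgemkrit} by identifying functionals on $I/\m_z I$ with directional derivatives at $z$, and use the standard duality for spanning polyhedral cones. But there is a genuine gap at the step you flag as the crux. You claim that nonsingularity of $V$ at $z$, via Lemma \ref{imimm2}, gives injectivity of $I/\m I\to\m/\m^2$ and that the kernel of $v\mapsto D_v|_I$ is exactly $T_z(V)$. Lemma \ref{imimm2} applies to an ideal whose quotient is regular, i.e.\ to the vanishing ideal $J$ of $V$ (localized at $\m$), not to $I=(g_1,\dots,g_r)$. In general $I$ is only contained in $J$, $A/I$ need not be regular or reduced, and nothing in the hypotheses directly forces the images of $I$ and $J$ in $\m/\m^2$ to agree; if they do not, then $I/\m I\to\m/\m^2$ can fail to be injective, the kernel of $v\mapsto D_v|_I$ is strictly larger than $T_z(V)$, and there are functionals in the dual cone of $\R_\plus\bar g_1+\dots+\R_\plus\bar g_r\subset I/\m I$ that are not realized by any $D_v$ — so hypothesis (2) of the theorem no longer covers them and the reduction to \ref{allgemkrit}(2) breaks down.

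The missing argument, which is the one nontrivial step of the paper's proof, is to show first that $I+\m^2=J+\m^2$, and this uses hypothesis (2) itself: if the inclusion were strict there would be $v\notin T_z(V)$ with $D_vg(z)=0$ for all $g\in I$, and applying (2) to both $v$ and $-v$ would give $D_vf(z)>0$ and $D_{-v}f(z)>0$, a contradiction. Granting this, one writes $J=I+(J\cap\m^2)=I+\m J$ (the last equality $J\cap\m^2=\m J$ being exactly what Lemma \ref{imimm2} gives for $J$, using regularity of $A_\m/JA_\m$), and Nakayama then yields $IA_\m=JA_\m$; only now does injectivity of $I/\m I\to\m/\m^2$ follow, and with it your identification of $(I/\m I)^\du$ with directional derivatives modulo $T_z(V)$. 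So your proposal implicitly assumes $I$ agrees with $J$ to first order at $z$, which is precisely what has to be proved, and proving it requires an extra use of hypothesis (2) plus Nakayama that your outline does not contain.
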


Here we have written $D_vf(z)$ for the directional derivative of $f$
at $z$ in the direction $v$, i.\,e.,
$$D_vf(z)\>=\>\lim_{t\to0}\frac{f(z+tv)-f(z)}t.$$

\begin{proof}
Write $A:=\R[\x]$ and $I:=(g_1,\dots,g_r)$, and let $J$ be the
vanishing ideal of $V$ in $A$. We are going to apply Prop.\
\ref{allgemkrit}. To verify hypothesis (2) there, fix $z\in Z(f)\cap
X(M)$, and let $\m:=\m_z$ be the corresponding maximal ideal of $A$.
Note that $I\subset J\subset\m$.

We first show $I+\m^2=J+\m^2$. Assume to the contrary that the
inclusion $I+\m^2\subset J+\m^2$ is strict. Then there exists a
linear form $\psi\in(\m/\m^2)^\du$ vanishing on all residue classes
of elements of $I$, but not on all residue classes of elements of
$J$. This means that there is a vector $v\in\R^n$ with $v\notin T_z
(V)$ and with $D_vg(z)=0$ for all $g\in I$. But this contradicts
assumption (2), since we cannot have $D_{\pm v}f(z)>0$ for both
signs~$\pm$.

Next we show that the elements of $(I/\m I)^\du$ are directional
derivatives at $z$. It is enough to prove that the map $I/\m I\to\m/
\m^2$ induced by the inclusion $I\subset\m$ is injective. Since $A_\m
/JA_\m$ is a regular local ring by hypothesis, the map $J/\m J\to\m/
\m^2$ is injective (Lemma \ref{imimm2}), which means $J\cap\m^2=
\m J$. On the other hand, $I+(J\cap\m^2)=J$ by what has just been
proven. So $I+\m J=J$. By the Nakayama lemma this implies $IA_\m=
JA_\m$,
and so $I/\m I\to\m/\m^2$ is injective as desired.

Therefore, when $v$ runs through the vectors in $\R^n$ as in (2),
then $\varphi_v\colon\bar g\mapsto D_vg(z)$ ($\bar g\in I/\m I$) runs
through the nonzero elements in the dual of the cone $\R_\plus
\bar g_1+\dots+\R_\plus\bar g_r\subset I/\m I$. So we see that
condition (2) in \ref{glatterandpkte} corresponds precisely to (2) in
Prop.\ \ref{allgemkrit}. The proof is therefore complete.
\end{proof}

\begin{rems}\label{remsglrdpkte}
\hfil

1.\
For  Thm.\ \ref{glatterandpkte}, it is not necessary to work in a
polynomial ring $\R[\x]$, resp.\ in affine space $\A^n$. One could
replace $\A^n$ by any nonsingular affine $\R$-variety, if one is
willing to reformulate condition (2) properly in this setting. We
restricted to the case of the polynomial ring only to allow a less
technical formulation.
\smallbreak

2.\
Let $W$ be the Zariski closure of $X(M)$. Then the hypotheses of
Theorem \ref{glatterandpkte} imply that every point $z\in Z(f)\cap
X(M)$ is a boundary point of $X(M)$ relative to $W(\R)$, except when
$f$ vanishes identically on a neighborhood of $z$ in $X(M)$. Indeed,
otherwise $T_z(V)\subsetneqq T_z(W)$,
and there would be a neighborhood of $z$ in $W(\R)$ on which $g_1,
\dots,g_r$ are nonnegative.
Choose any $v\in T_z(W)$ with $v\notin T_z(V)$ and apply (2) to
$\pm v$ to get a contradiction.
(By $T_z(W)$ we denote the tangent space of $W$ at $z$ in $\R^n$.)
\end{rems}

Here is a particularly concrete case of Thm.\ \ref{glatterandpkte}.
Again we denote $\R[\x]=\R[x_1,\dots,x_n]$.

\begin{thm}\label{polytop}
Let $K\subset\R^n$ be a nonempty compact convex polyhedron, described
by linear inequalities $g_1\ge0,\dots,g_s\ge0$. Let $S$ be the
semiring in $\R[\x]$ generated by $\R_\plus$ and $g_1,\dots,g_s$. Let
$F$ be a face of $K$, and let $f\in\R[\x]$ satisfy $f|_F=0$ and $f|_
{K\setminus F}>0$. For every $z\in F$ and every $y\in K\setminus F$
assume $D_{y-z}f(z)>0$. Then $f\in S$.
\end{thm}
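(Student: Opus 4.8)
The plan is to derive Theorem~\ref{polytop} from Theorem~\ref{glatterandpkte}, applied with $A=\R[\x]$ and $M=S$. I first collect the easy preliminary observations. By Remark~\ref{remsomu}.6 the semiring $S$ is archimedean; moreover $X(S)=K$ and $S$ is an $S$-module over itself, so the standing hypotheses of \ref{glatterandpkte} are met (via the alternative ``$S$ archimedean''). Since $f|_F=0$ and $f|_{K\setminus F}>0$, the zero set $Z(f)\cap X(M)$ equals $F$. Put $J:=\{j\colon g_j|_F\equiv0\}$ and take $(g_j)_{j\in J}$ as the family called $g_1,\dots,g_r$ in \ref{glatterandpkte}; these lie in $S$ and vanish on $F$. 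By the standard description of faces of a polyhedron, the affine hull $\mathrm{aff}(F)$ equals $V:=\{x\colon g_j(x)=0,\ j\in J\}$, and $F=K\cap V$. Since a polyhedron is Zariski dense in its affine hull, $V$ is precisely the reduced Zariski closure of $Z(f)\cap X(M)$ required in \ref{glatterandpkte}, and, being a linear variety, $V$ is nonsingular at every point of $F$. (We assume here $\emptyset\ne F\subsetneq K$; the two degenerate cases are treated at the end.)

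For hypothesis~(1) of \ref{glatterandpkte} I must check $f\in(g_j\colon j\in J)$. Choosing coordinates in which a maximal linearly independent subfamily of the affine forms $g_j$, $j\in J$, becomes part of a coordinate system, one sees that $(g_j\colon j\in J)$ is the full vanishing ideal of the affine subspace $V$. Since $f$ vanishes on $F$, which is Zariski dense in $V$, it vanishes on all of $V$; in particular $f\in(g_j\colon j\in J)$, and I will reuse below that $f$ vanishes identically on $V$.

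The main point is hypothesis~(2): given $z\in F$ and $v\in\R^n$ with $D_vg_j(z)\ge0$ for all $j\in J$ and $v\notin T_z(V)$, show $D_vf(z)>0$. Write $\ell_j$ for the linear part of $g_j$, so $D_vg_j(z)=\ell_j(v)$ is independent of $z$ and $T_z(V)=\{w\colon\ell_j(w)=0\ \forall j\in J\}$; thus, since $\ell_j(v)\ge0$ for all $j\in J$ while $v\notin T_z(V)$, some $j_0\in J$ has $\ell_{j_0}(v)>0$. Fix $z'$ in the relative interior of $F$; then $g_j(z')>0$ for $j\notin J$, since $\{x\in F\colon g_j(x)=0\}$ is a proper face of $F$ and misses $\mathrm{relint}(F)$. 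Using $\ell_j(z'-z)=g_j(z')-g_j(z)$, one checks that for $N>0$ large enough and then $t>0$ small enough the point $y:=z+t\bigl(v+N(z'-z)\bigr)$ satisfies $g_1(y)\ge0,\dots,g_s(y)\ge0$ (treat $j\in J$, then $j\notin J$ with $g_j(z)=0$, then $j\notin J$ with $g_j(z)>0$ in turn), while $g_{j_0}(y)=t\,\ell_{j_0}(v)>0$ forces $y\notin V\supseteq F$; hence $y\in K\setminus F$. The hypothesis of the theorem now gives $0<D_{y-z}f(z)=t\,D_{v+N(z'-z)}f(z)$, and since $z'-z$ is tangent to $V$ and $f$ vanishes on $V$ we have $D_{z'-z}f(z)=0$, so $D_vf(z)=D_{v+N(z'-z)}f(z)>0$. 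With both hypotheses verified, \ref{glatterandpkte} yields $f\in S$. I expect this verification of~(2) to be the only real difficulty: at points $z$ on a proper subface of $F$ the ray $z+tv$ may leave $K$ at once, which is exactly why one first tilts $v$ toward $\mathrm{relint}(F)$ and then uses $f|_V\equiv0$ to discard the correction term in the directional derivative; showing that $y$ genuinely lies in $K$ is the single computational step.

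Finally the degenerate faces. If $F=\emptyset$ then $f>0$ on $K$, so $nf\in S$ for some $n\in\N$ by the Representation Theorem~\ref{repth}, whence $f=\frac1n(nf)\in S$ because $\R_\plus\subset S$. If $F=K$ then $f$ vanishes on $K$, hence on $\mathrm{aff}(K)=\{\ell_1=\dots=\ell_k=0\}$ for suitable affine forms $\ell_i$; each $\pm\ell_i$ is nonnegative on $K$ and hence lies in $S$ by Minkowski's theorem (Remark~\ref{remsomu}.6), and writing $f=\sum_i\ell_iq_i$ with $q_i\in\R[\x]$ and then $q_i=a_i-b_i$ with $a_i,b_i\in S$ (possible since $S$ is archimedean, so $S-S=\R[\x]$) gives $f=\sum_i\bigl(\ell_ia_i+(-\ell_i)b_i\bigr)\in S$.
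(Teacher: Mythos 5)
Your proposal is correct and follows essentially the same route as the paper: deduce the theorem from Theorem~\ref{glatterandpkte} with $M=S$, note that the $g_j$ vanishing on $F$ generate the vanishing ideal of $\mathrm{aff}(F)$ (hence contain $f$), and verify hypothesis (2) by tilting $v$ by a multiple of a direction into $\mathrm{relint}(F)$, which leaves $D_vf(z)$ unchanged since the correction is tangent to $V$ where $f$ vanishes. Your explicit treatment of the degenerate faces $F=\emptyset$ and $F=K$ is a small additional care the paper passes over by assuming $1\le r\le s$.
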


Speaking informally, the last hypothesis says that every directional
derivative of $f$ at a point of $F$ pointing into $K$ and not
tangential to $F$ should be strictly positive.

\begin{proof}
By Remark \ref{remsomu}, $S$ is archimedean. After
relabelling the $g_i$ we can assume that $g_1,\dots,g_r$ vanish
identically on $F$ while $g_{r+1},\dots,g_s$ don't, where $1\le r\le
s$. Then $I:=(g_1,\dots,g_r)$ is the full vanishing ideal of the
affine subspace $V$ spanned by $F$,
and so $f\in I$.

We are going to apply Theorem \ref{glatterandpkte} with $M=S$.
Condition (1) has just been established. In view of (2) fix $z\in F$,
and let $v\in\R^n$ with $v\notin T_z(V)$ and $D_vg_i(z)\ge0$ for
$i=1,\dots,r$.
We need to show $D_vf(z)>0$.

For this we would like to assure that $z+bv\in K$ for small $b>0$.
A~priori, this need not be the case. However, we still have some
freedom to adjust $v$. Choose $w\in\R^n$ such that $z+\epsilon w$
lies in the relative interior of $F$ for small $\epsilon>0$. Then
for every index $j\in\{r+1,\dots,s\}$ we have either $g_j(z)>0$ or
$D_wg_j(z)>0$.
Replace $v$ by $v+tw$ for large $t>0$. This doesn't change $D_va(z)$
for $a\in I$,
but in this way we can achieve $D_vg_j(z)>0$ for every $j\in\{1,
\dots,s\}$ with $g_j(z)=0$. Therefore, $z+bv\in K\setminus F$ for
small $b>0$, which means $v=c(y-z)$ for suitable $c>0$ and $y\in K
\setminus F$. From the hypothesis made on $f$ we therefore conclude
$D_vf(z)>0$.
\end{proof}

\begin{rem}
In the situation of Theorem \ref{polytop}, it was so far not even
known whether $f$ would lie in the preordering $\PO(g_1,\dots,g_r)$
except when $F$
is a face of codimension one.
(In this case, after extracting from $f$ the linear
equation for $F$ with the maximal possible power, one is left with a
polynomial which is strictly positive on $K$.)
\end{rem}

\begin{example}
Consider the simplex
$$K\>=\>\Bigl\{x\in\R^n\colon x_1\ge0,\dots,x_n\ge0,\>\sum_{i=1}^nx_i
\le1\Bigr\}$$
in $\R^n$, and let $S\subset\R[x_1,\dots,x_n]$ be the semiring
generated by $\R_\plus$ and $x_1,\dots,x_n$, $1-\sum_{i=1}^nx_i$.
Consider the face $F=K\cap\{x_1=\cdots=x_r=0\}$ of $K$ (with
$1\le r\le n$ being fixed). Given a polynomial $f$ with $f>0$ on
$K\setminus F$ and $f=0$ on $F$, we have $f\in S$ provided that
$\partial_{x_1}f,\dots,\partial_{x_r}f$ are strictly positive on
$F$.
\end{example}

While Theorem \ref{glatterandpkte} applies only in cases where the
zeros of $f$ in $X(M)$ lie on the boundary of $X(M)$ (see Remark
\ref{remsglrdpkte}), we will now mention a result which applies when
$f$ vanishes in interior points of $X(M)$.

\begin{thm}\label{innpkte}
Let $M=\QM(g_1,\dots,g_m)$ be an archimedean quadratic module in $\R
[\x]$. Let $f\in\R[\x]$ with $f\ge0$ on $X(M)$. Assume that the
(reduced) Zariski closure $V$ of $Z(f)\cap X(M)$ in $\A^n$ is a
local complete intersection.
For every point $z\in Z(f)\cap X(M)$, assume moreover:
\begin{itemize}
\item[(1)]
$z$ is a nonsingular point of $V$,
\item[(2)]
$\nabla f(z)=0$,
\item[(3)]
$D^2f(z)[v,v]>0$ for all $v\in\R^n$ with $v\notin T_z(V)$.
\end{itemize}
Then $f\in M$.
\end{thm}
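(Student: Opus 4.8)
The plan is to deduce the theorem from the Goodearl--Handelman criterion \ref{goodhandl}, applied to the ideal $I:=J^2$, where $J\subset\R[\x]$ is the vanishing ideal of $V$. Two degenerate cases are settled first: if $Z(f)\cap X(M)=\emptyset$ then $f>0$ on $X(M)$ and the conclusion follows from Theorem \ref{kaddubqm}; if $V=\A^n$ then $f$ vanishes on a Zariski dense subset of $\A^n$, so $f=0$. Otherwise $\emptyset\neq V\subsetneq\A^n$ and $J\neq(0)$; fix generators $b_1,\dots,b_k$ of $J$ and put $u:=b_1^2+\cdots+b_k^2\in\Sigma A^2\cap J^2$. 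The first substantial step is to prove $f\in J^2$. Since $f$ vanishes on $Z(f)\cap X(M)$, which is Zariski dense in the reduced scheme $V$, we have $f\in J$; let $\bar f$ be its class in the conormal module $J/J^2$, which is locally free over $\R[\x]/J$ because $V$ is a local complete intersection. For $z\in Z(f)\cap X(M)$ the point $z$ is nonsingular on $V$ by (1), so by Lemma \ref{imimm2} the induced map $J/\m_zJ\to\m_z/\m_z^2$ on fibres is injective, and the image of $\bar f$ there is the differential $df(z)=0$ (hypothesis (2)); hence $\bar f$ vanishes in the fibre at $z$. A section of a locally free sheaf on a reduced scheme vanishing at a Zariski dense set of points is zero, so $\bar f=0$, i.e.\ $f\in J^2$.

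Next, $I=J^2$ is generated by the products $b_ib_j$, hence (after polarization) by finitely many squares of elements of $J$, in particular by elements of $\Sigma A^2\subset M$; so by Proposition \ref{oekritqm}(a) and the remark following it, $u$ is an order unit of $(I,M\cap I)$. Let $\varphi$ be any pure state of $(I,M\cap I,u)$. By Theorem \ref{goodhandl} it is enough to prove $\varphi(f)>0$: this yields $nf\in M\cap I\subset M$ for some $n\geq1$, and then $f=\bigl(\tfrac1{\sqrt n}\bigr)^2(nf)\in\Sigma A^2\cdot M\subset M$. By Theorem \ref{reinzustarchmod} and the dichotomy \ref{dicho} (with $S=\Sigma A^2$), $\varphi$ is $\phi$-linear for a ring homomorphism $\phi\colon\R[\x]\to\R$, i.e.\ $\phi$ is evaluation at some $z\in\R^n$. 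Since $u\in\Sigma A^2\cap I$, for every $x\in M$ we have $xu\in M\cap I$ and $\varphi(xu)=\phi(x)\varphi(u)=\phi(x)$, so $\phi\in X(M)$, i.e.\ $z\in X(M)$. If $\varphi$ is of type (I), then $\varphi(f)=f(z)/u(z)$ with $u(z)>0$ ($u$ being a nonzero sum of squares with $\phi(u)\neq0$); here $f(z)\geq0$ because $z\in X(M)$, and $f(z)=0$ would force $z\in Z(f)\cap X(M)\subset V$, hence $b_i(z)=0$ for all $i$ and $u(z)=0$, a contradiction. So $\varphi(f)>0$ in this case.

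If $\varphi$ is of type (II), then $\phi(u)=0$, so $b_i(z)=0$ for all $i$, giving $z\in V(\R)$; then $f\in J^2\subset\m_z$ forces $f(z)=0$, so $z\in Z(f)\cap X(M)$ and conditions (1)--(3) hold at $z$. The state $\varphi$ factors through a linear functional $\bar\varphi$ on $I/\m_zI=J^2/\m_zJ^2$ with $\bar\varphi(\bar u)=1$ and $\bar\varphi\geq0$ on the image of $M\cap I$. By Lemma \ref{imimm2}, $J^2/\m_zJ^2$ embeds into $\m_z^2/\m_z^3$ with image $\mathrm{Sym}^2(N_z^*)$, the space of quadratic forms on the normal space $N_z:=T_z\A^n/T_z(V)$, and under this identification the image of $\bar f$ is half the normal Hessian $v\mapsto D^2f(z)[v,v]$, which by (3) is a positive definite form, i.e.\ an interior point of the cone of positive semidefinite forms. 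Since $b^2\in\Sigma A^2\cap J^2\subset M\cap I$ for every $b\in J$ and the differentials $db(z)$ exhaust $N_z^*$, the image of $M\cap I$ contains every rank-one square, hence the whole positive semidefinite cone, of which $\bar u$ is a nonzero element. Thus $\bar\varphi$ is a nonzero functional that is nonnegative on the positive semidefinite cone, hence strictly positive on its interior, and $\varphi(f)=\bar\varphi(\bar f)>0$.

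The two delicate points are exactly (i) upgrading the pointwise first-order vanishing $\nabla f(z)=0$ to the global membership $f\in J^2$, which is where the local complete intersection and reducedness hypotheses are used, and (ii) the identification of $J^2/\m_zJ^2$ with the quadratic forms on the normal space, so that hypothesis (3) says precisely that the class of $f$ lies in the interior of the positive semidefinite cone. Note that, unlike in Proposition \ref{allgemkrit}, one cannot use finitely many generators of $I$ and a polyhedral cone here, because the positive semidefinite cone is not polyhedral; this is why the argument is run directly via pure states on $J^2$ instead of by quoting \ref{allgemkrit}.
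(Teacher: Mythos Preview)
Your proof is correct and follows the same route as the paper: show $f\in J^2$ via the local complete intersection hypothesis and Zariski density, use $u=\sum b_i^2$ as an order unit of $(J^2,M\cap J^2)$, and check $\varphi(f)>0$ for pure states of types (I) and (II) via the dichotomy \ref{dicho}. The only cosmetic difference is in type~(II): the paper extends $\bar\varphi$ to a positive semidefinite form on all of $\m_z/\m_z^2$ and diagonalizes it as $\sum_i D^2(\cdot)(z)[v_i,v_i]$, whereas you stay on $\mathrm{Sym}^2(N_z^*)$ and argue directly that $\bar\varphi$ is nonnegative on the psd cone and $\bar f$ lies in its interior---these are equivalent formulations of the same linear-algebraic fact.
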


Here $D^2f(p)[v,w]$ denotes the evaluation of the Hessian $D^2f(p)$
at the pair of vectors $(v,w)$.

\begin{proof}
Let $J$ be the vanishing ideal of $V$ in $\R[\x]$. We have $f\in J$
and are going to show $f\in J^2$. First fix $z\in Z(f)\cap X(M)$, let
$\m=\m_z$ be the corresponding maximal ideal of $\R[\x]$. Then $f\in
\m^2$ since $\nabla f(z)=0$. Since $V$ is a local complete
intersection, $J/J^2$ is locally free as a module over $\R[V]=\R[\x]
/J$ (e.\,g.\ \cite{H}, pp.~184--185). Since $\bar f\in\m_zJ/J^2$ for
every $z\in Z(f)\cap X(M)$, and since this set is Zariski dense in
$V$, it follows that $f\in J^2$.

By Prop.\ \ref{oekritqm}(a), $(J^2,M\cap J^2)$ has an order unit $u$.
Let $\varphi$ be a pure state of $(J^2,M\cap J^2,u)$, we shall show
$\varphi(f)>0$. If $\varphi$ is of type~(I) then, up to positive
scaling, $\varphi$ is evaluation in some point of $X(M)$ outside
$Z(f)$,
and so $\varphi(f)>0$. If $\varphi$ is of type~(II), there is a point
$z\in Z(f)\cap X(M)$ such that $\varphi$ is induced by a linear map
$\bar\varphi\colon J^2/\m J^2\to\R$, where $\m:=\m_z$. Since $z$ is a
nonsingular point of $V$, the map $J^2/\m J^2\to\m^2/\m^3$ induced by
the inclusion $J^2\subset\m^2$ is injective (Lemma \ref{imimm2}). The
inclusion $J/\m J\into\m/\m^2$ induces an inclusion of the second
symmetric powers of these vector spaces, which is $J^2/\m J^2\into
\m^2/\m^3$. The linear map $\bar\varphi$ can therefore be seen as a
positive semidefinite symmetric bilinear form on $J/\m J$. As such it
can be extended to $\m/\m^2$. This yields a linear extension $\tilde
\varphi\in(\m^2/\m^3)^\du$ of $\bar\varphi$ such that $\tilde\varphi
(\bar g^2)\ge 0$ for all $g\in\m$. Since the elements of $(\m^2/\m^3)
^\du$ are the symmetric second order differential operators at $z$,
it follows that there is a positive semidefinite symmetric matrix
$(s_{ij})$ such that $\varphi(g)=\sum_{i,j}s_{ij}\partial_{x_i}
\partial_{x_j}g(z)$ for all $g\in J^2$. In particular, there are
vectors $v_1,\dots,v_k$ in $\R^n$ with
$$\varphi(g)=\sum_{i=1}^kD^2g(z)[v_i,v_i]$$
for every $g\in J^2$.
Since $\varphi$ does not vanish identically on $J^2$ we have $v_i
\notin T_p(V)$ for at least one index~$i$. Therefore $\varphi(f)>0$
follows from the hypothesis.
\end{proof}

\begin{rem}\label{lcireicht}
The condition in Thm.\ \ref{innpkte} that $V$ is a local complete
intersection means that the ideal $J$ of $V$ in $\R[\x]$ can locally
be generated by $n-\dim(V)$ many elements. It is satisfied if $V$ is
nonsingular, but the condition is much more general.
\end{rem}

\begin{example}
We illustrate the use of Thm.\ \ref{innpkte} by an example. Let $M$
be an archimedean quadratic module in $\R[x,y,z]$, let $K=X(M)$, and
let $Z=\{(0,0,t)\colon t\in\R\}$ be the $z$-axis in $\R^3$. Assume
that $p$, $q$, $r\in\R[x,y,z]$ are such that
$$f\>=\>x^2\cdot p+y^2\cdot q+2xy\cdot r$$
satisfies $f>0$ on $K\setminus Z$ and $f=0$ on $Z$. Then $f\in M$,
provided that $p$ and $pq-r^2$ are strictly positive on $Z\cap K$.
This follows by a direct application of \ref{innpkte}.
\end{example}


\section*{Acknowledgments}
We are indebted to David Handelman for pointing out to us the
relevance of pure states on ideals to certificates of nonnegativity.
At the very early stages of this work, the third author appreciated
many stimulating discussions with Daniel Plaumann.


\end{document}